\numberwithin{equation}{section}%公式编号1.1开始
\def\varep{\varepsilon}
\newtheorem{theorem}{Theorem}[section]
\newtheorem{lemma}[theorem]{Lemma}
\newtheorem{proposition}[theorem]{Proposition}
\newtheorem{definition}[theorem]{Definition}
\newtheorem{remark}{Remark}
\title{Spreading speeds and pulsating fronts for a field-road model in a spatially periodic habitat}
\author{
	Mingmin Zhang\thanks{mingmin.zhang.math@gmail.com (M. Zhang).}
	\\
	{\small
		School of Mathematical Sciences, University of Science and Technology of China, Hefei, Anhui 230026, China\\
		Aix Marseille Universit\'{e}, CNRS, Centrale Marseille, I2M,  % Institut de Math\'{e}matiques de Marseille\\
		%UMR 7373, 13453
		 Marseille, France
	}
}
\date{}
\begin{document}
	
	\maketitle

	\begin{abstract}

	A reaction-diffusion model which is called the field-road model was introduced by Berestycki, Roquejoffre and Rossi \cite{BRR2013-1} to describe biological invasion with fast diffusion on a line. In this paper, we investigate  this model in a heterogeneous landscape and establish the existence of the asymptotic  spreading speed $c^*$ as well as its coincidence with the minimal wave speed of pulsating fronts along the road.  We start with  a truncated problem with an imposed  Dirichlet boundary condition. We prove the existence of spreading speed $c^*_R$ which coincides with the minimal speed of pulsating fronts for the truncated problem in the  direction of the road. The arguments combine the dynamical system method with PDE's approach. 
	Finally, we turn back to the original problem in the half-plane via generalized principal eigenvalue approach as well as an asymptotic method. 
	\vspace{2mm}
	
  \noindent {\small{\it  Mathematics Subject Classification}: 35B40; 35B53; 35C07; 35K57}
   	\vspace{1mm}
   
\noindent {\small{\it Key words}: KPP equations; Reaction-diffusion equations; Pulsating fronts; Asymptotic spreading speed; Generalized principal eigenvalues.}\\
		\end{abstract}
	\section{Introduction}
	
The goal of this paper is  to investigate propagation properties for a  field-road model in a spatially periodic environment. Taking into account this  heterogeneity in space, we shall establish the existence of the asymptotic spreading speed and its coincidence with the minimal wave speed of pulsating traveling fronts in the direction of the road. In this paper, the line $\{(x,0):~ x\in\mathbb{R}\}$ will be referred to as the road in the plane $\mathbb{R}^2$. The heterogeneity is assumed to appear in $x$-direction. Then by symmetry, we can consider the upper half-plane $\Omega:=\{(x,y)\in\mathbb{R}^2:~ y>0\}$ as the field.  Denote by $u(t,x)$  the density of population on the  road and by $v(t,x,y)$ the density of  population in the field. The population in the field is assumed to be governed by a Fisher-KPP equation with diffusivity $d$ and heterogeneous nonlinearity $f(x,v)$, whereas  the population on the road is subject to a diffusion equation with diffusivity $D>0$ which is a priori different from $d$. Moreover, there are exchanges of populations between the road and the field in which the parameter $\mu>0$ stands for the rate of individuals on the road going into the field, while the parameter $\nu>0$  represents the rate of individuals passing from the field to the road. Therefore, we are led to the following system:
	\begin{equation}
	\label{pb in half-plane}
	\begin{cases}
	\partial_t u -D\partial_{xx}u=\nu v(t,x,0)-\mu u, & t>0,\ x\in \mathbb{R},\\
	\partial_t v -d\Delta v =f(x,v), &t>0,\ (x,y)\in\Omega,\\
	-d\partial_y v(t,x,0)=\mu u-\nu v(t,x,0), &t>0,\  x\in \mathbb{R}.
	\end{cases}
	\end{equation} 
 We assume that the reaction term $f(x,v)$ depends on the $x$ variable in a periodic fashion. As a simple example, $f$ may be of the type $f(x,v)=a(x)v(1-v)$
in which the periodic coefficient $a(x)$  can be interpreted as an effective birth rate of the population. In models of biological invasions, the heterogeneity may be a consequence of the presence of highly differentiated zones such as forests, rivers, grasslands, roads, villages, etc., where the species in consideration may tend to reproduce or die with different rates from one place to another. Therefore, it is a fundamental problem to understand how heterogeneity influences the characteristics of front propagation such as front speeds and front profiles.

Let us recall the origin of this model and relevant results.	The field-road model was first introduced by Berestycki, Roquejoffre and Rossi \cite{BRR2013-1} in 2013 where all parameters are homogeneous. The authors proved that a strong diffusion on the road enhances global invasion in the field. More precisely, denote by $w^*$ the asymptotic  spreading speed in the direction of the road for the homogeneous field-road model and by $c_{KPP}:=2\sqrt{df'(0)}$ the spreading speed for the scalar KPP equation $u_t-d u_{xx}=f(u)$, they proved that:	 if $D\le 2d$, then $w^*=c_{KPP}$; if $D>2d$, then $w^*>c_{KPP}$. Moreover, they showed that the propagation velocity on the road increases indefinitely as $D$ grows to infinity. 	As a sequel,	the same authors introduced in \cite{BRR2013-2}  transport and mortality on the road  to understand the resulting new effects.   Let us point out that the original model was considered in a homogeneous frame, which means that every place in the field is equivalently suitable for the survival of  species, whereas this homogeneity assumption is hardly satisfied in natural environments. Therefore, it is of the essence to take into account the heterogeneity of the medium. Later on, it was proved  in \cite{BCRR2013-2014,BRR2016-1} that the road enhances the asymptotic speed of propagation in a cone of directions.	The paper \cite{BRR2016-2} established the existence of standard traveling fronts for this homogeneous system for $c\ge w^*$. Giletti, Monsaingeon and Zhou \cite{GMZ2015}  considered this model with spatially periodic exchange coefficients:
	\begin{equation*}
	\begin{cases}
	\partial_t u -D\partial_{xx}u=\nu(x) v(t,x,0)-\mu(x) u, &t>0,\  x\in \mathbb{R}, \\
	\partial_t v -d\Delta v =f(v), &t>0,\  (x,y)\in\Omega,\\
	-d\partial_y v(t,x,0)=\mu(x) u(t,x)-\nu(x) v(t,x,0), &t>0,\  x\in \mathbb{R},
	\end{cases}
	\end{equation*}
	where $\mu(x), \nu(x)$ are $L$-periodic in $x$ in $C^{1,r}(\mathbb{R})$, and $\mu(x),\nu(x) \ge\not\equiv 0$. They recovered the same diffusion threshold $D=2d$ in \cite{BRR2013-1}.
	In 2016, Tellini \cite{Tellini2016} studied the homogeneous field-road model in a strip with an imposed Dirichlet boundary condition on the other side of the strip. It is noticed that traveling fronts were just studied in \cite{BRR2016-2} for the original homogeneous model and in \cite{Dietrich2015} for a truncated problem with ignition-type nonlinearity. 
	
	Related results were also obtained in various frameworks.
	The case of a fractional diffusion
	on the road was treated in \cite{BCRR2013-2014,BCRR2015}.   Nonlocal exchanges were studied in \cite{Pauthier2015,Pauthier2016}. Models with an ignition-type nonlinearity were considered in \cite{Dietrich2015,Dietrich2017}.  The field-road model set in an infinite cylinder with fast diffusion on the surface was investigated in \cite{RTV2017}. The case where the field is a cone was studied in \cite{Ducasse2018}.
 The authors in \cite{BDR2019}  discussed the effect of the road on a population in an ecological niche facing climate change based on
the notion of generalized principal eigenvalues for heterogeneous road-field systems developed in \cite{BDR1}. Propagation phenomena for heterogeneous KPP bulk-surface systems in a cylindrical domain  was investigated recently in \cite{BGT2020}. The existence of weak solutions to an elliptic problem in bounded and unbounded strips motivated by the field-road model  was discussed in \cite{CZ2020}. 
An interesting but different field-road model where the road is with very thin width was introduced in  \cite{LW2017} using the so-called effective boundary conditions to study  speed enhancement and the asymptotic spreading speed. 
	 
	  By contrast with standard periodic reaction-diffusion equations, the mathematical study of \eqref{pb in half-plane} contains the following difficulties: firstly, the periodic assumption only set on $x$ variable but not on $y$ leads to the noncompactness of the domain,  therefore the existence of pulsating  fronts cannot be obtained by PDE's methods easily. Secondly, due to the heterogeneous hypothesis on  $f$, the situation is much more involved so that we are not able to derive precise threshold result of speed enhancement with respect to different diffusivities on the road and in the field. Thirdly, in terms of the generalized eigenvalue problem in the half-plane,  one of main technical difficulties is to get some estimates for the generalized principal eigenfunction pair. To the best of our knowledge, there has been no known result about the existence of generalized traveling fronts for the field-road model in heterogeneous media up to now.
	  
	  The aim of this work is to prove the existence of the asymptotic spreading speed $c^*$ as well as its coincidence with the minimal speed of  pulsating traveling fronts along the road for \eqref{pb in half-plane} in a spatially periodic habitat. Our strategy is to study a truncated problem with an imposed zero Dirichlet upper boundary condition  as a first step.
	  Specifically, by application of principal eigenvalue theory and of dynamical system method, we show the existence of the  asymptotic spreading speed $c^*_R$ as well as its  coincidence with the minimal speed of  pulsating traveling fronts along the road.  We further give a variational formula for $c^*_R$ by using the principal eigenvalue of certain linear elliptic problem.   Based on the study of the truncated problem, we eventually go back to the analysis of the  original problem in the half-plane by combining generalized principal eigenvalue approach with an asymptotic method. Let us mention that the results in this paper can also be adapted to the case  of periodic exchange coefficients treated in \cite{GMZ2015}.

	For general reaction-diffusion problems, there have been lots of remarkable works on spreading properties and pulsating traveling fronts. We refer to \cite{Weinberger2002, BH2002, BHN2005, LZ2007, LZ2007, BHN2010, BHR2005-2, HR2011} and references therein.

	\section{Hypotheses and main results}
	Throughout this paper, we assume that $f$: $ \mathbb{R}\times\mathbb{R}_+\to \mathbb{R}$ is of class $C^{1,\delta}$ in $(x,v)$ (with $0<\delta<1$) and $C^2$ in $v$, $L$-periodic in $x$, and satisfies the KPP assumption:
	$$ f(\cdot,0)\equiv 0\equiv f(\cdot,1),~  0<f(\cdot,v)\le f_v(\cdot,0)v~ \text{for}~v\in(0,1),~ f(\cdot,v)<0~ \text{for}~v\in(1,+\infty).$$
%This  implies that either $0\le v\le 1$ or $v>1$ holds true for all $x\in\mathbb{R}$.
	Define  $M:=\max_{[0,L]}f_v(x,0)$ and $m:=\min_{[0,L]}f_v(x,0)$. Then $M\ge m>0$. We further assume that   
	$$\forall x\in\mathbb{R},~ v\mapsto   \frac{f(x,v)}{v} \  \text{is  decreasing in} \  v>0.$$

	In what follows, as far as the Cauchy problem is concerned, we always assume that  the initial condition $(u_0,v_0)$
	is nonnegative, bounded and  continuous.
	
%	Under the spatially periodic frame, we construct here the so-called pulsating traveling fronts, first introduced in \cite{SKT1986}, and obtain variational formulas of the asymptotic spreading speeds respectively  for \eqref{pb in half-plane} and \eqref{pb in strip}.  Moreover, we also conclude that the asymptotic spreading speed coincides with the minimal speed in the strip and in the half-plane.

	%one is especially interested in this paper in understanding the role of the  reaction term $f$ on the existence of the class of solutions connecting the two steady states $(\nu/\mu,1)$ and $(0,0)$ and propagating with a constant average speed, i.e., the so-called pulsating traveling fronts, for the field-road model in the half-plane which was first introduced in \cite{SKT1986}. To see this, we first focus on the truncated problem in $\Omega_R:=\mathbb{R}\times (0,R)$ where the Dirichlet boundary condition is imposed on the upper boundary, and investigate the existence of the pulsating traveling fronts connecting the two stationary solutions $(U_R(x), V_R(x,y))$ and $(0,0)$ which propagates at constant speed of the truncated problem in the strip $\Omega_R$. Here, we are going to prove the existence of pulsating traveling fronts for \eqref{pb in half-plane} and give a variational formula of the spreading speed. Moreover, we also conclude that the asymptotic spreading speed coincides with the minimal speed.
	
We now present our results in this paper. As a first step, we focus on the following truncated problem with an imposed Dirichlet upper boundary condition:
\begin{equation}
\label{pb in strip}
\begin{cases}
\partial_t u -D\partial_{xx}u=\nu v(t,x,0)-\mu u, &t>0,\  x\in \mathbb{R}, \\
\partial_t v -d\Delta v =f(x,v), &t>0,\ (x,y)\in\Omega_R,\\
-d\partial_y v(t,x,0)=\mu u-\nu v(t,x,0), &t>0,\  x\in \mathbb{R},\\
v(t,x,R)=0, &t>0,\  x\in \mathbb{R},
\end{cases}
\end{equation}
in which $\Omega_R:=\{(x,y)\in\mathbb{R}:~ 0<y<R\}$ denotes a truncated domain with width $R$ sufficiently large. In fact, the width $R$ of the strip plays a crucial role in long time behavior of the corresponding Cauchy problem \eqref{pb in strip} due to the zero Dirichlet upper boundary condition. A natural explanation, from the biological point of view, is that if the width of the strip is not sufficiently large, the species may finally extinct because of the effect of unfavorable Dirichlet condition on the upper boundary. Therefore, we shall give  a sufficient condition on $R$ such that the species can persist successfully. Here is our statement.
	\begin{theorem}
		\label{thm2.1}
		If
		\begin{equation}
		\label{R-condition}
		m>\frac{d\pi^2}{4R^2},
		\end{equation}
		then \eqref{pb in strip} admits a unique nontrivial nonnegative stationary solution $(U_R, V_R)$,  which is $L$-periodic in $x$. Moreover, let $(u,v)$ be the solution of \eqref{pb in strip} with a nonnegative,  bounded and continuous initial datum $(u_0, v_0)\not\equiv(0,0)$, then 
		\begin{equation}
		\label{large time-truncated}
		\lim\limits_{t\to +\infty}(u(t,x),v(t,x,y))= (U_R(x), V_R(x,y))~~\text{locally uniformly in}~ (x,y)\in\overline\Omega_R.
		\end{equation}
	\end{theorem}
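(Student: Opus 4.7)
The plan is to prove the three claims of Theorem \ref{thm2.1} in sequence, building on a principal eigenvalue analysis of the linearization of \eqref{pb in strip} at $(0,0)$ and then applying the standard sub-/supersolution and KPP-monotonicity machinery, adapted to the field-road coupling. The key linear object is the cooperative periodic eigenvalue problem
\begin{equation*}
\begin{cases}
-D\phi'' + \mu\phi - \nu\psi(\cdot,0) = \lambda \phi, & x\in\mathbb{R},\\
-d\Delta\psi - f_v(x,0)\psi = \lambda\psi, & (x,y)\in\Omega_R,\\
-d\partial_y\psi(x,0) = \mu\phi - \nu\psi(x,0), & x\in\mathbb{R},\\
\psi(x,R) = 0, & x\in\mathbb{R},
\end{cases}
\end{equation*}
with $L$-periodicity in $x$. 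A Krein--Rutman argument on the fundamental cell $[0,L]\times[0,R]$ should yield a simple principal eigenvalue $\lambda_1^R$ with a strictly positive eigenfunction pair $(\phi_1,\psi_1)$. To link $\lambda_1^R$ to the threshold in \eqref{R-condition}, I would derive the comparison $\lambda_1^R \le \frac{d\pi^2}{4R^2} - m$ via a Collatz--Wielandt argument with a test pair of the form $(\phi_0(x), \chi(x)\cos(\pi y/(2R)))$ that absorbs the coupling boundary condition on $\{y=0\}$; under \eqref{R-condition} this yields $\lambda_1^R<0$.

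Next, I would construct $(U_R,V_R)$ by monotone iteration between the ordered pair $(\varepsilon\phi_1, \varepsilon\psi_1)$ and $(\nu K/\mu, K)$. For small $\varepsilon>0$, the Taylor expansion $f(x,v)=f_v(x,0)v+o(v)$ combined with $\lambda_1^R<0$ makes the former a strict stationary subsolution, while any $K\ge 1$ satisfies $f(x,K)\le 0$ and renders $(\nu K/\mu, K)$ a stationary supersolution (all road/field/coupling relations hold with equality for this constant pair, and the top Dirichlet condition is trivial). Iterating the resolvent of the linearized cooperative operator between these produces a monotone sequence of $L$-periodic functions, bounded in $C^{2,\delta}$ up to the boundary by Schauder estimates, whose limit $(U_R,V_R)$ is a nontrivial nonnegative $L$-periodic classical stationary solution.

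For uniqueness, I would invoke the strict KPP monotonicity that $v\mapsto f(x,v)/v$ is decreasing. Given two positive $L$-periodic stationary solutions $(U_R^{(i)}, V_R^{(i)})$, $i=1,2$, set $\kappa^{*}:=\sup\{\kappa>0:\ \kappa(U_R^{(1)},V_R^{(1)})\le (U_R^{(2)},V_R^{(2)})\text{ on }\overline{\Omega}_R\}$. By $L$-periodicity this supremum is attained; if $\kappa^{*}<1$, a contact point exists either in the interior or on $\{y=0\}$ (the top Dirichlet boundary is harmless because both sides vanish there), and the strict KPP inequality $f(x,\kappa^{*} V_R^{(1)})/(\kappa^{*} V_R^{(1)}) > f(x, V_R^{(2)})/V_R^{(2)}$ combined with the strong maximum principle and the Hopf lemma adapted to the coupled road-field system contradicts the maximality of $\kappa^{*}$. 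Swapping the two solutions gives $(U_R^{(1)}, V_R^{(1)})=(U_R^{(2)}, V_R^{(2)})$.

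Finally, for the long-time convergence \eqref{large time-truncated}, I would use the parabolic comparison principle to trap the Cauchy solution. After an arbitrarily short time $t_0>0$, the strong parabolic maximum principle (applied block-wise to the cooperative system with the Robin-type coupling on $\{y=0\}$) makes $(u(t_0,\cdot),v(t_0,\cdot,\cdot))$ strictly positive in $\overline{\Omega}_R\setminus\{y=R\}$, so that on any compact set we can sandwich $(\varepsilon\phi_1,\varepsilon\psi_1)\le (u(t_0,\cdot),v(t_0,\cdot,\cdot))\le(\nu K/\mu, K)$. The parabolic orbits starting from the sub- and supersolution are monotone in $t$ and, by parabolic regularity together with the uniqueness of the stationary solution, converge to $(U_R,V_R)$; the sandwich then forces local uniform convergence of $(u,v)$ to $(U_R,V_R)$. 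The main obstacle, in my view, is the eigenvalue step: the non-self-adjoint road-field coupling combined with $L$-periodicity and the sharp threshold $d\pi^2/(4R^2)$ forces one to run Krein--Rutman/Collatz--Wielandt through a non-standard cooperative system on a half-infinite strip, whereas the subsequent existence, uniqueness, and convergence steps are by comparison fairly routine adaptations of the scalar periodic KPP theory.
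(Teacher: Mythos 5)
Your overall skeleton (subsolution from a negative principal eigenvalue, constant supersolution, monotone iteration, sliding argument for uniqueness, trapping for long-time behavior) is a reasonable and classical one, but the key lemma you rely on---the eigenvalue threshold estimate---does not go through as stated, and this is a genuine gap, not a technicality. The claimed bound $\lambda_1^R\le\frac{d\pi^2}{4R^2}-m$ cannot be obtained from the test pair $(\phi_0(x),\chi(x)\cos(\pi y/(2R)))$ you propose. If $\chi$ absorbs the boundary condition via $\nu\chi=\mu\phi_0$, then the road-equation Rayleigh quotient reduces to $-D\phi_0''/\phi_0$, and making this quantity negative would force $\phi_0$ to be convex and $L$-periodic, hence constant, hence the quotient is $0$, not negative. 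Thus the dual Collatz--Wielandt bound gives at best $\lambda_1^R\le 0$; obtaining strict negativity under the sharp hypothesis \eqref{R-condition} requires an additional argument (a perturbation or a strict maximum principle step). Moreover, even when established correctly, the bound is not $\lambda_1^R\le\frac{d\pi^2}{4R^2}-m$ in general: a direct computation in the spatially homogeneous case shows $\frac{\partial\lambda_1^R}{\partial m}\big|_{\lambda_1^R=0}=-\bigl(1+\frac{2\nu}{\mu R}\bigr)^{-1}>-1$, so for $m$ slightly above the threshold the eigenvalue satisfies $\lambda_1^R>\frac{d\pi^2}{4R^2}-m$. This is also why the eigenvalue estimate the paper does prove (Proposition~\ref{principal eigenvalue in strip}(iv)), namely $-\lambda_R(0)>m-d\pi^2/R^2$, only yields $\lambda_R(0)<0$ under the stronger assumption $m>d\pi^2/R^2$, not under \eqref{R-condition}.

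The paper avoids this issue altogether: rather than proving $\lambda_R(0)<0$, it constructs an explicit compactly supported subsolution $\cos(\omega x)\bigl(1,\frac{\mu\sin(\beta(R-y))}{d\beta\cos(\beta R)+\nu\sin(\beta R)}\bigr)$ (inspired by Tellini's Proposition 3.4), where the crucial freedom is to take $\beta$ slightly above $\pi/(2R)$ so that $\cos(\beta R)<0$. Condition \eqref{R-condition} precisely guarantees enough room to then choose a small $\omega>0$ satisfying both required inequalities, and the boundary coupling on $\{y=0\}$ is arranged to hold with equality by construction. This is more elementary (no eigenvalue theory required for Theorem~\ref{thm2.1}), and it is tailored exactly to the threshold $m>d\pi^2/(4R^2)$ rather than passing through a spectral estimate that is not sharp at the threshold. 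If you wish to pursue the eigenvalue route, you should not try to quantify the deficit $\lambda_1^R-\bigl(\tfrac{d\pi^2}{4R^2}-m\bigr)$; instead argue qualitatively that $\lambda_1^R<0$ (e.g., by showing the test pair above is not proportional to the principal eigenfunction, so the dual Collatz--Wielandt inequality $\lambda_1^R\le 0$ is strict).

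One further gap: you prove uniqueness only within the class of $L$-periodic stationary solutions, taking the $L$-periodicity for granted (``by $L$-periodicity this supremum is attained''). The theorem asserts uniqueness among all nontrivial nonnegative bounded stationary solutions, with the $L$-periodicity then deduced a posteriori from uniqueness. The paper's proof first establishes uniqueness for arbitrary bounded stationary solutions (handling the non-attained supremum by extracting translated subsequences as in its Theorem~\ref{liouville}), and only afterwards concludes $L$-periodicity by noting that every $L$-lattice translate is again a solution. Your argument, as written, is circular on this point and should be restructured accordingly.
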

	\begin{remark}
	\textnormal{In particular, when the environment is  homogeneous, i.e. $f(x,v)\equiv f(v)$, $R$ should satisfy $4R^2 f'(0)>d\pi^2$, which coincides with the condition in \cite{Tellini2016}. Let $R_*>0$ be such that $m=\frac{d\pi^2}{4R^2_*}$.
	For any	$R>R_0:=2R_*$, \eqref{R-condition} is satisfied and there also holds $m=\frac{d\pi^2}{R^2_0}>\frac{d\pi^2}{R^2}$.
	\textit{Throughout the paper, as far as the truncated problem is concerned, it is not restrictive to assume that $R>R_0$ (since our concern is to take $R\to+\infty$ to consider \eqref{pb in half-plane}), which will be convenient to prove the positivity of the asymptotic spreading speed $c^*_R$ for problem \eqref{pb in strip}.}}
\end{remark}

Let $(U_R,V_R)$ be the unique nontrivial nonnegative stationary solution of \eqref{pb in strip} in the sequel. We are now in a position to investigate  spreading properties of solutions to \eqref{pb in strip} in $\overline\Omega_R$, which  is based on dynamical system method and principal eigenvalue theory.

We first consider  the following eigenvalue problem in the strip $\overline\Omega_R$:
\begin{align}
\label{eigen-pb-strip}
\begin{cases}
-D\phi''+2D\alpha\phi'+(-D\alpha^2+\mu)\phi-\nu\psi(x,0)=\sigma\phi,  &x\in\mathbb{R},\\
-d\Delta\psi+2d\alpha\partial_x\psi-(d\alpha^2+f_v(x,0))\psi=\sigma\psi,  &(x,y)\in
\Omega_R,\\
-d\partial_y\psi(x,0)+\nu\psi(x,0)-\mu\phi=0, \   &x\in\mathbb{R},\\
\psi(x,R)=0, \   &x\in\mathbb{R},\\
\phi, \psi \ \text{are} \   L\text{-periodic with respect to} \ x.
\end{cases}
\end{align}
The compactness of the domain allows us to  apply the classical Krein-Rutman theory which provides the existence of  the  principal eigenvalue $\lambda_R(\alpha)\in\mathbb{R}$ and the associated unique (up to multiplication by some constant) positive principal eigenfunction pair $(P_{\alpha,R}(x),Q_{\alpha,R}(x,y)) \in C^3(\mathbb{R})\times C^3(\overline\Omega_R)$ for each $\alpha\in\mathbb{R}$. 
 
    \begin{theorem}
  	\label{thm-asp-strip}
  	Let $(U_R,V_R)$ be the unique nontrivial nonnegative stationary solution of \eqref{pb in strip} obtained in Theorem \ref{thm2.1} and let $(u,v)$  be the solution of \eqref{pb in strip} with a nontrivial continuous initial datum $(u_0,v_0)$ with $(0,0)\le (u_0,v_0)\le  (U_R,V_R)$ in $\overline\Omega_R$. Then there exists $c^*_R>0$ given by
  	\begin{equation*}
  		c^*_{R}=\inf\limits_{\alpha>0}\frac{-\lambda_R(\alpha)}{\alpha},
  	\end{equation*}
  	 called the 
  	  asymptotic spreading speed,
  	  such that the following statements are valid:
  	 \begin{enumerate}[(i)]
  	 	\item If $(u_0,v_0)$ is compactly supported, then for any $c> c^*_{R}$, there holds
  	 	\begin{equation*}
  	 		\lim\limits_{ t\to+\infty}\sup\limits_{|x|\ge ct,\ y\in[0,R]} |(u(t,x),v(t,x,y))|=0.
  	 	\end{equation*}
  	 \item 
  	 For any $0<c<c^*_R$, there holds
  	 	\begin{equation*}
  	 \lim\limits_{ t\to+\infty}\sup\limits_{|x|\le ct,\  y\in[0,R]}|(u(t,x),v(t,x,y))-(U_R(x),V_R(x,y))|=0.
  	 \end{equation*}
  	 \end{enumerate}
  \end{theorem}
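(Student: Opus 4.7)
\textbf{Proof plan for Theorem \ref{thm-asp-strip}.}
The proof naturally splits into an upper bound (part (i)) and a lower bound (part (ii)). The upper bound will be obtained from an exponentially decaying supersolution built from the principal eigenpair $(P_{\alpha,R},Q_{\alpha,R})$ of \eqref{eigen-pb-strip}; the lower bound will be obtained from a compactly supported subsolution built from the same eigenpair, combined with the long-time attractivity of $(U_R,V_R)$ furnished by Theorem \ref{thm2.1}. Preliminarily one has to check that $c^*_R\in(0,+\infty)$: the condition $R>R_0$ from the remark following Theorem \ref{thm2.1} yields $\lambda_R(0)<0$, a direct inspection of the $\alpha$-principal parts of \eqref{eigen-pb-strip} shows $\lambda_R(\alpha)\to-\infty$ as $\alpha\to+\infty$, and the continuity and concavity of $\alpha\mapsto\lambda_R(\alpha)$ (standard for Krein--Rutman principal eigenvalues) force the infimum to be positive and finite.

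\textbf{Upper bound.} For $c>c^*_R$, pick $\alpha>0$ with $c\alpha+\lambda_R(\alpha)>0$ and set
\[
\overline u(t,x):=A\,P_{\alpha,R}(x)\,e^{-\alpha(x-ct)},\qquad \overline v(t,x,y):=A\,Q_{\alpha,R}(x,y)\,e^{-\alpha(x-ct)}.
\]
A direct substitution using \eqref{eigen-pb-strip} shows that $(\overline u,\overline v)$ satisfies the linearization of \eqref{pb in strip} at $(0,0)$ with an extra nonnegative slack $(c\alpha+\lambda_R(\alpha))(\overline u,\overline v)$, and that the Robin condition at $\{y=0\}$ and the Dirichlet condition at $\{y=R\}$ are both built in. The KPP bound $f(x,v)\le f_v(x,0)v$ then promotes $(\overline u,\overline v)$ to a supersolution of the full nonlinear problem \eqref{pb in strip}. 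Choosing $A$ large so that $(u_0,v_0)\le(\overline u(0,\cdot),\overline v(0,\cdot,\cdot))$, which is possible since $(u_0,v_0)$ is compactly supported, the parabolic comparison principle gives $(u,v)\le(\overline u,\overline v)$ for all $t>0$, whence exponential decay on $\{x\ge ct\}$. The analogous construction with $-\alpha$ in place of $\alpha$ (equivalent, thanks to the $L$-periodicity in $x$, to applying the same argument to the reflected system) handles $\{x\le-ct\}$.

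\textbf{Lower bound.} Fix $0<c<c'<c^*_R$ and choose $\alpha>0$ with $c'\alpha+\lambda_R(\alpha)<0$. Pick $\eta>0$ so that $c'\alpha+\lambda_R(\alpha)+\eta<0$ and $\epsilon>0$ so that $f(x,v)\ge (f_v(x,0)-\eta)v$ for $0\le v\le\epsilon$ and all $x$. Following the classical Hamel--Roques scheme, adapted to the coupled system, I would take
\[
\underline u(t,x):=\kappa\big(e^{-\alpha\xi}-e^{-(\alpha+\delta)\xi}\big)_+P_{\alpha,R}(x),\quad \underline v(t,x,y):=\kappa\big(e^{-\alpha\xi}-e^{-(\alpha+\delta)\xi}\big)_+Q_{\alpha,R}(x,y),
\]
with $\xi=x-c't$ and $\delta>0$ small, and choose $\kappa>0$ so small that $\underline v\le\epsilon$ on its support. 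A direct computation shows that $(\underline u,\underline v)$ is a subsolution of \eqref{pb in strip} on its support, with both boundary conditions on $\{y=0\}$ and $\{y=R\}$ automatically satisfied by $(P_{\alpha,R},Q_{\alpha,R})$. After a time shift making $(u,v)$ strictly dominate a small translate of this bump (justified by the strong maximum principle and Theorem \ref{thm2.1}), comparison gives $\liminf_{t\to+\infty}(u,v)(t,c't+x_0,y)>0$ locally uniformly in $(x_0,y)$ with $x_0\le 0$. A translation-compactness argument in the moving frame, exploiting the $L$-periodicity in $x$ and the uniqueness of the nontrivial nonnegative $L$-periodic stationary solution from Theorem \ref{thm2.1}, then upgrades this positive lower bound to the claimed uniform convergence to $(U_R,V_R)$ on $|x|\le ct$; the $\{x\le -ct\}$ side is handled symmetrically.

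\textbf{Main obstacle.} The delicate step is the lower bound. First, the subsolution must be constructed for the \emph{coupled} road--field system in a way that the Robin exchange at $\{y=0\}$ is preserved under truncation and under the nonlinear perturbation -- this is what the matching between $P_{\alpha,R}$ and $Q_{\alpha,R}$ ensures at the linear level, but some care is needed to propagate this compatibility to $(\underline u,\underline v)$. Second, the passage from a positive lower bound in the moving frame to uniform convergence to $(U_R,V_R)$ on the whole spreading window $|x|\le ct$ requires a translation-compactness argument that must handle simultaneously the noncompactness in $x$ and the nonlocal coupling through the Robin boundary, after which the uniqueness statement in Theorem \ref{thm2.1} identifies the limit as $(U_R,V_R)$ and closes the argument.
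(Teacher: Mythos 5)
Your overall plan (exponential supersolution for the upper bound, compactly supported subsolution plus translation--compactness for the lower bound) is a standard route, and it is genuinely different from what the paper does: the paper does not build sub/supersolutions directly. Instead it verifies hypotheses (A1)--(A6) of Liang--Zhao's abstract monostable-semiflow framework for the solution map $Q_t$ of \eqref{pb in strip} and invokes their abstract spreading-speed and traveling-wave theorems; the variational formula $c^*_R=\inf_{\alpha>0}(-\lambda_R(\alpha))/\alpha$ is then obtained in Proposition \ref{formula} by sandwiching $Q_1$ between linear semigroups and quoting Weinberger/Liang--Zhao, and $c^*_{R,+}=c^*_{R,-}>0$ is Lemma \ref{lemma-same speed}. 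So a correct direct PDE proof would complement the paper's argument; the issue is that yours, as written, has a gap in the lower bound.

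Your upper bound is fine: substituting $(\overline u,\overline v)=A\,e^{-\alpha(x-ct)}(P_{\alpha,R},Q_{\alpha,R})$ into \eqref{pb in strip} and using \eqref{eigen-pb-strip} indeed gives the slack $(c\alpha+\lambda_R(\alpha))(\overline u,\overline v)\ge 0$ in the interior equations while the Robin and Dirichlet boundary relations hold with equality, and the KPP inequality $f(x,v)\le f_v(x,0)v$ promotes this to a supersolution of the nonlinear problem.

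The lower bound, however, does not go through with the subsolution you write. Two problems. First, $e^{-(\alpha+\delta)\xi}P_{\alpha,R}(x)$ is not a solution of the linearized moving-frame equations: differentiating, you get the correct eigenvalue identity for the $e^{-\alpha\xi}P_{\alpha,R}$ piece, but the $e^{-(\alpha+\delta)\xi}P_{\alpha,R}$ piece produces a leftover term of the form $2D\delta\,P_{\alpha,R}'(x)$ (and similarly $2d\delta\,\partial_x Q_{\alpha,R}$) whose sign oscillates in $x$; one cannot absorb it uniformly for small $\delta$ near the edge of the support, where the difference of exponentials is itself of order $\delta$. Second, and more structurally, the ``difference of two decaying exponentials'' trick belongs to the supercritical regime $c>c^*_R$ (where the dispersion relation $c\alpha=-\lambda_R(\alpha)$ has two real roots and both exponentials are exact solutions of the linearization). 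For $c'<c^*_R$ one has $c'\alpha+\lambda_R(\alpha)<0$ for \emph{every} $\alpha>0$, so even if you replace $P_{\alpha,R}$ in the subtracted term by the matching eigenfunction $P_{\alpha+\delta,R}$ (so that both pieces satisfy clean equations), both contributions to the linearized operator come out with the same sign and the subtraction does not yield $\le 0$ on the whole support. The standard fix is to leave the real axis: work in the moving frame $\xi=x-c't$ and either (a) use a complex eigen-pair $\alpha+i\omega$ and take the positive part of $\mathrm{Re}\,[e^{-(\alpha+i\omega)\xi}P_{\alpha+i\omega,R}(x)]$, which is compactly supported in $\xi$, or (b) use the sign of the Dirichlet principal eigenvalue of the linearized moving-frame operator on a long truncated box to produce a compactly supported subsolution with an $e^{\mu t}$ growth factor, truncated at the small level where $f(x,v)\ge(f_v(x,0)-\eta)v$ holds. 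Once you have such a compactly supported subsolution, the second half of your argument (strictly dominating a translate after a time shift, then a translation-compactness argument in the moving frame together with the uniqueness in Theorem \ref{thm2.1}) is the right closing move, and is indeed what is needed to upgrade ``$\liminf>0$ in the moving frame'' to uniform convergence to $(U_R,V_R)$ on $\{|x|\le ct\}$.
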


 Before stating the result of pulsating fronts for \eqref{pb in strip}, let us give the definition of pulsating traveling fronts in the strip $\overline\Omega_R$ for clarity.
\begin{definition}\label{Def2.3}
	A  rightward pulsating front of \eqref{pb in strip} connecting  $(U_R(x), V_R(x,y))$ to $(0,0)$  with effective mean speed $c\in\mathbb{R}_+$ is a time-global classical solution  $(u(t,x),v(t,x,y))=(\phi_R(x-ct,x),\psi_R(x-ct,x,y))$ of \eqref{pb in strip} such that the following periodicity property holds:
	\begin{equation}
	\label{periodicity-strip}
	u(t+\frac{k}{c},x)=u(t,x-k),\quad  v(t+\frac{k}{c},x,y)=v(t,x-k,y) \quad \forall k\in L\mathbb{Z},~ \forall t\in\mathbb{R}, ~\forall (x,y)\in\overline\Omega_R.
	\end{equation}
	Moreover, the profile $(\phi_R(s,x),\psi_R(s,x,y))$ satisfies
	\begin{equation}
	\label{limit-strip}
	\begin{cases}
	\phi_R(-\infty,x)=U_R(x), \ \phi_R(+\infty,x)=0 \ \text{uniformly in} \ x\in\mathbb{R},\\
	\psi_R(-\infty,x,y)=V_R(x,y), \ \psi_R(+\infty,x,y)=0  \ \text{uniformly in} \ (x,y)\in\overline\Omega_R,
	\end{cases}
	\end{equation}	
	with $(\phi_R(s,x), \psi_R(s,x,y))$ being continuous in $s\in\mathbb{R}$.	
	
	Similarly,  a  leftward pulsating front of \eqref{pb in strip}  connecting $(0,0)$ to  $(U_R(x), V_R(x,y))$ with effective mean speed $c\in\mathbb{R}_+$ is a time-global classical solution  $(\tilde u(t,x),\tilde v(t,x,y))=(\phi_R(x+ct,x),\psi_R(x+ct,x,y))$ of \eqref{pb in strip} such that the following periodicity property holds:
	\begin{equation*}
	%\label{periodicity-strip-left}
	\tilde u(t+\frac{k}{c},x)=\tilde u(t,x+k),\quad  \tilde v(t+\frac{k}{c},x,y)=\tilde v(t,x+k,y) \quad \forall k\in L\mathbb{Z},~\forall t\in\mathbb{R}, ~\forall (x,y)\in\overline\Omega_R.
	\end{equation*}
	Moreover, the profile $(\phi_R(s,x),\psi_R(s,x,y))$ satisfies
	\begin{equation*}
%	\label{limit-strip-left}
	\begin{cases}
	\phi_R(-\infty,x)=0, \ \phi_R(+\infty,x)=U_R(x) \ \text{uniformly in} \ x\in\mathbb{R},\\
	\psi_R(-\infty,x,y)=0, \ \psi_R(+\infty,x,y)=V_R(x,y)  \ \text{uniformly in} \ (x,y)\in\overline\Omega_R,
	\end{cases}
	\end{equation*}	
	with $(\phi_R(s,x), \psi_R(s,x,y))$ being continuous in $s\in\mathbb{R}$.	
\end{definition}

\begin{theorem}\label{thm-PTF-in strip}
	Let $c^*_{R}$ be given as in Theorem \ref{thm-asp-strip}. Then
	the following statements are valid:
	\begin{enumerate}[(i)]
		\item Problem \eqref{pb in strip} admits a rightward pulsating front  connecting $(U_R(x), V_R(x,y))$ to $(0,0)$ with wave profile $(\phi_R(s,x),\psi_R(s,x,y))$  being continuous and decreasing in $s$ if and only if $c\ge c^*_{R}$.
		\item  Problem \eqref{pb in strip} admits a leftward pulsating front  connecting $(0,0)$ to $(U_R(x), V_R(x,y))$  with wave profile $(\phi_R(s,x),\psi_R(s,x,y))$  being continuous and increasing in $s$ if and only if $c\ge c^*_{R}$.
	\end{enumerate}
\end{theorem}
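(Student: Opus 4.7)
The plan is to prove the two directions of the equivalence in Theorem \ref{thm-PTF-in strip} separately, treating the rightward case first and reducing the leftward case by a spatial reflection. For the necessity direction ($c \geq c^*_R$), I would argue by contradiction: suppose a rightward pulsating front of speed $c < c^*_R$ exists, with profile $(\phi_R,\psi_R)$ satisfying the limits \eqref{limit-strip}. Pick an initial datum $(u_0,v_0)$ obtained by truncating the front profile at $t=0$ so that $(u_0,v_0)$ is continuous, compactly supported, nontrivial, and dominated by $(U_R,V_R)$. The parabolic comparison principle places the corresponding solution of \eqref{pb in strip} below a suitably shifted copy of the pulsating front, so it remains close to zero in a region of the form $x\geq ct+O(1)$. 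On the other hand, the spreading result in Theorem \ref{thm-asp-strip}(ii) forces convergence of the solution to $(U_R,V_R)$ uniformly in $|x|\leq c't$ for any $c'\in(c,c^*_R)$, yielding a contradiction in the wedge $\{ct+O(1)\leq x\leq c't\}$ as $t\to+\infty$.

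For the sufficiency direction, consider first $c>c^*_R$. The variational formula $c^*_R=\inf_{\alpha>0}(-\lambda_R(\alpha)/\alpha)$, combined with standard convexity/coercivity properties of $\alpha\mapsto\lambda_R(\alpha)$ obtained by perturbation in \eqref{eigen-pb-strip}, yields $\alpha_c>0$ with $c=-\lambda_R(\alpha_c)/\alpha_c$. The natural exponentially decaying super-solution is
\begin{equation*}
(\overline u(t,x),\overline v(t,x,y))=\min\bigl\{e^{-\alpha_c(x-ct)}(P_{\alpha_c,R}(x),Q_{\alpha_c,R}(x,y)),\ (U_R(x),V_R(x,y))\bigr\},
\end{equation*}
whose validity rests on the KPP bound $f(x,v)\leq f_v(x,0)v$ together with the identities \eqref{eigen-pb-strip}, including the Robin matching at $y=0$. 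A matching sub-solution of Berestycki--Hamel type
\begin{equation*}
\max\bigl\{e^{-\alpha_c(x-ct)}(P_{\alpha_c,R},Q_{\alpha_c,R})-B\,e^{-(\alpha_c+\eta)(x-ct)}(P_{\alpha_c+\eta,R},Q_{\alpha_c+\eta,R}),\ 0\bigr\}
\end{equation*}
is constructed for $\eta>0$ small (using $c>-\lambda_R(\alpha_c+\eta)/(\alpha_c+\eta)$) and $B$ large, exploiting the $C^2$ regularity of $f$ in $v$ near $v=0$. I would then run a standard construction: solve \eqref{pb in strip} from time $-n$ with initial data equal to $(\overline u,\overline v)$ at time $-n$, observe that the sequence obtained after the natural time-shift $t\mapsto t-n/c$ is monotone in $n$ by the comparison principle, and pass to $n\to+\infty$ via parabolic interior estimates to produce a solution of \eqref{pb in strip} of the form $(\phi_R(x-ct,x),\psi_R(x-ct,x,y))$. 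Strict monotonicity of the profile in $s=x-ct$ then follows from a sliding argument together with the uniqueness of $(U_R,V_R)$ provided by Theorem \ref{thm2.1}.

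For the critical speed $c=c^*_R$, I would take $c_n\downarrow c^*_R$ and the corresponding monotone pulsating fronts from the previous step, normalize them by imposing (for example) $\max_{x\in[0,L]}\phi^n_R(0,x)=U_R(0)/2$, and extract a subsequential limit through parabolic Schauder estimates and Arzelà--Ascoli. Monotonicity in $s$ is preserved in the limit, and the $\pm\infty$ limits of the profile are forced by the fact that any bounded entire solution of \eqref{pb in strip} monotone in the moving frame and valued between $(0,0)$ and $(U_R,V_R)$ must coincide with one of the two endpoints; this in turn follows from Theorem \ref{thm2.1} applied slicewise in the moving frame, the normalization excluding the trivial limit, and the KPP concavity ruling out intermediate plateaus. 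The leftward statement is obtained by applying the rightward result to the reflected system $x\mapsto-x$, which preserves the $L$-periodic KPP assumptions on $f$ and the associated constants $m,M$.

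The main obstacle I anticipate is the simultaneous construction and verification of the coupled sub-/super-solutions: the modulating eigenpair $(P_{\alpha,R},Q_{\alpha,R})$ must be used consistently on the road, on the field, and at the interface $y=0$, so the usual scalar-PDE calculus has to be redone for the road-field system with its nonstandard Robin-type matching $-d\partial_y v=\mu u-\nu v$. A secondary difficulty is the construction at the exact critical speed: excluding nontrivial intermediate entire profiles between $(0,0)$ and $(U_R,V_R)$ requires a Harnack-type comparison against the principal eigenfunction of \eqref{eigen-pb-strip} at $\alpha^*$, and controlling the behavior at $s=+\infty$ when the two asymptotic exponential rates coalesce is the delicate point of the limiting argument.
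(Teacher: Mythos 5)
Your proposal follows the classical Berestycki--Hamel PDE route: exponential sub-/super-solutions built from the principal eigenpair of \eqref{eigen-pb-strip}, monotone iteration from $t=-n$, a limiting argument at the critical speed, a sliding argument for strict monotonicity, and a reflection to handle the leftward direction. This is a genuinely different route from the paper, which instead verifies hypotheses (A1)--(A6) of the Liang--Zhao abstract monotone-semiflow theory for the solution map $Q_t$ on $\mathcal{K}$, invokes their Theorems~3.1, 3.4, 4.1--4.4 to obtain \emph{simultaneously} the spreading speed $c^*_{R,\pm}$, the existence of non-increasing rightward and non-decreasing leftward pulsating waves exactly for $c\ge c^*_{R,\pm}$, and the coincidence of spreading and minimal wave speeds, and then only adds a short strong-maximum-principle argument to upgrade monotonicity to strict monotonicity. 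Your approach is more elementary and self-contained, at the cost of carrying out the coupled sub-/super-solution computations at the Robin interface $y=0$ (which I agree is the real technical work and which does go through, since $(P_{\alpha,R},Q_{\alpha,R})$ satisfies the matching condition in \eqref{eigen-pb-strip} exactly, so the boundary inequality closes for both the min-type super-solution and the Berestycki--Hamel sub-solution, and the compatibility hypotheses of Proposition~\ref{cp--generalized sub-strip} are trivially satisfied when one of the two pieces is $0$ or the steady state). The paper's approach is less explicit but hands you existence at all $c\ge c^*_R$ including the critical case, and both directions, in one stroke.

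There is, however, one genuine gap you should close. The theorem asserts that the \emph{same} threshold $c^*_R$ works for the rightward and the leftward pulsating fronts. Your reflection $x\mapsto -x$ shows that a leftward front for \eqref{pb in strip} is a rightward front for the reflected system, whose minimal rightward wave speed, by your construction, is $\inf_{\alpha>0}\bigl(-\lambda_R(-\alpha)/\alpha\bigr)$; this quantity is \emph{a priori} different from $c^*_R=\inf_{\alpha>0}\bigl(-\lambda_R(\alpha)/\alpha\bigr)$. In heterogeneous periodic media the leftward and rightward spreading/wave speeds may in general differ (the paper even flags this in a remark). The coincidence here is a special feature of the one-dimensional road and is established in the paper by Lemma~\ref{lemma-same speed}, through a duality identity: multiplying the $\alpha$ and $-\alpha$ eigenvalue systems \eqref{right} and \eqref{left} by each other's eigenfunctions, integrating over $(0,L)$ on the road and over $(0,L)\times(0,R)$ in the field, and combining the two relations to deduce $\lambda_R(\alpha)=\lambda_R(-\alpha)$. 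Saying that the reflection ``preserves the $L$-periodic KPP assumptions and the constants $m,M$'' does not give you this equality, and neither does invoking Theorem~\ref{thm-asp-strip} as a black box in a non-circular way, since in the paper Theorems~\ref{thm-asp-strip} and \ref{thm-PTF-in strip} are proved simultaneously and the equality is an input into that joint proof. You should therefore insert the integration-by-parts argument (or an equivalent one) showing $\lambda_R(\alpha)=\lambda_R(-\alpha)$ before reducing the leftward statement to the rightward one.
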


 Theorems \ref{thm-asp-strip} and  \ref{thm-PTF-in strip} are proved simultaneously. It is worth mentioning that, compared with   the homogeneous field-road model  \cite{BRR2013-1} where there exists a unique minimal speed $w^*$ along the road in the left and right directions, here we get a striking resemblance. That is,    with the spatially periodic assumption and one-dimentional  setting on the road, the KPP minimal wave speeds in the right and left directions  are still the same,  even if there is no	homogeneity in $x$-direction anymore. 
However, the asymptotic spreading speeds may differ  in general, according to the direction in heterogeneous media and/or in higher dimension.

%After the detailed analysis in the truncated domain, we turn to problem \eqref{pb in half-plane} in the half-plane. 
Having the  principal eigenvalue $\lambda_R(\alpha)$ for eigenvalue problem \eqref{eigen-pb-strip} in hand, we  construct in Section \ref{generalized p-e}  the \textit{generalized} principal eigenvalue $\lambda(\alpha)$ by passing to the limit $R\to+\infty$ in $\lambda_R(\alpha)$ for each $\alpha\in\mathbb{R}$, and show that there exists a  positive and $L$-periodic  (in $x$) solution $(P_\alpha,Q_\alpha)$ of the following generalized eigenvalue problem in the half-plane: 
\begin{align}
\label{generalized ep}
\begin{cases}
-D P_\alpha''+2D\alpha P_\alpha'+(-D\alpha^2+\mu) P_\alpha-\nu Q_\alpha(x,0)=\lambda(\alpha) P_\alpha,  &x\in\mathbb{R},\\
-d\Delta Q_\alpha+2d\alpha\partial_x Q_\alpha-(d\alpha^2+f_v(x,0)) Q_\alpha=\lambda(\alpha) Q_\alpha,  &(x,y)\in\Omega,\\
-d\partial_y Q_\alpha(x,0)+\nu Q_\alpha(x,0)-\mu P_\alpha=0, \   &x\in\mathbb{R},\\
 P_\alpha,  Q_\alpha \ \text{are} \ L\text{-periodic with respect to} \ x.
\end{cases}
\end{align}
We call  $(P_\alpha,Q_\alpha)$  the  generalized principal eigenfunction pair associated with $\lambda(\alpha)$.

We are now in a position to give the spreading result in the half-plane.

 \begin{theorem}\label{thm-asp-half-plane} 
	Let $(u, v)$ be the solution of  \eqref{pb in half-plane} with a nonnegative, bounded and continuous initial datum $(u_0,v_0)\not\equiv (0,0)$. Then there exists $c^*>0$ defined by
	\begin{equation*}
	%\label{spreading speed in half-plane}
	c^*=\inf\limits_{\alpha>0}\frac{-\lambda(\alpha)}{\alpha},%~~c^*_-=\inf\limits_{\alpha>0}\frac{-\lambda(-\alpha)}{\alpha},
	\end{equation*}
called the asymptotic spreading speed,  such that the following statements are valid:	
	\begin{itemize}	
		\item[(i)] If $(u_0,v_0)$ is compactly supported, then for any $c> c^*$, for any $A>0$,
		\begin{equation*}
		\lim\limits_{ t\to+\infty}\sup\limits_{|x|\ge ct,\ 0\le y\le A} |(u(t,x),v(t,x,y))|=0.
		\end{equation*}
		%	and 
		%\begin{equation*}
		%\lim\limits_{ t\to+\infty}\sup\limits_{x\le -c't,\ 0\le y\le A} |(u(t,x),v(t,x,y))|=0~~\text{for all}~c'> c^*_-.
		%\end{equation*}
		\item[(ii)] If $(u_0,v_0)<(\nu/\mu,1)$, then, for any $0<c<c^*$, for any $A>0$, 
		\begin{equation}
		\label{find lower bound}
		\lim\limits_{ t\to+\infty}\sup\limits_{|x|\le ct,\ 0\le y\le A} |(u(t,x),v(t,x,y))-(\nu/\mu,1)|=0.
		\end{equation}
	\end{itemize}
\end{theorem}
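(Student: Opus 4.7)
The plan is to reduce Theorem \ref{thm-asp-half-plane} to the truncated results (Theorems \ref{thm2.1}, \ref{thm-asp-strip}, \ref{thm-PTF-in strip}) together with exponential supersolutions built from the generalized principal eigenfunction pair $(P_\alpha, Q_\alpha)$ of \eqref{generalized ep}. A preparatory step, carried out in the course of constructing $\lambda(\alpha)$ in Section \ref{generalized p-e}, provides the monotone convergences $\lambda_R(\alpha) \searrow \lambda(\alpha)$ and $c^*_R \nearrow c^*$ as $R\to+\infty$, as well as the monotone locally uniform convergence $(U_R, V_R) \nearrow (\nu/\mu, 1)$ of the truncated steady states. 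The latter follows from comparison (increasing the Dirichlet strip makes $(U_R, V_R)$ nondecreasing in $R$, bounded above by the stationary solution $(\nu/\mu, 1)$ of \eqref{pb in half-plane}) together with uniqueness of the positive bounded $L$-periodic stationary solution, standard under the present KPP hypotheses.

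For part (i), I exploit the KPP inequality $f(x,v) \le f_v(x,0) v$ to reduce the problem to the linearized system. Given $c > c^*$, pick $\alpha > 0$ such that $c\alpha + \lambda(\alpha) > 0$; a direct computation using \eqref{generalized ep} then shows that, for every $M > 0$,
\begin{equation*}
\bigl(\overline u(t,x), \overline v(t,x,y)\bigr) := M e^{-\alpha(x-ct)}\bigl(P_\alpha(x), Q_\alpha(x,y)\bigr)
\end{equation*}
is a classical supersolution of the linearized half-plane system. For compactly supported $(u_0, v_0)$, the $L$-periodicity of $P_\alpha$ and the boundedness of $Q_\alpha$ on every strip $\mathbb{R} \times [0,A]$ allow $M$ to be chosen so that $(u_0, v_0) \leq (\overline u(0,\cdot), \overline v(0,\cdot,\cdot))$, and the parabolic comparison principle (valid for this cooperative field-road coupling) yields $(u, v) \le (\overline u, \overline v)$, hence decay on $\{x \ge ct\} \cap \{0 \le y \le A\}$. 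A symmetric supersolution built from $(P_{-\alpha}, Q_{-\alpha})$ treats $\{x \le -ct\}$.

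For part (ii), the upper estimate $(u, v) \le (\nu/\mu, 1)$ is immediate since $(\nu/\mu, 1)$ is a stationary solution of \eqref{pb in half-plane} and $(u_0, v_0) < (\nu/\mu, 1)$. For the matching lower estimate, fix $c < c^*$ and, invoking $c^*_R \nearrow c^*$, choose $R$ with $c < c^*_R$. By parabolic regularity and the strong maximum principle, after some $\tau > 0$ the solution is continuous and strictly positive on every compact subset of $\overline\Omega$, so $(u(\tau, \cdot), v(\tau, \cdot, \cdot))$ dominates some nontrivial continuous $(u_*, v_*)$ with $(0,0) \le (u_*, v_*) \le (U_R, V_R)$ in $\overline\Omega_R$. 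Let $(\tilde u, \tilde v)$ solve the truncated problem \eqref{pb in strip} with initial datum $(u_*, v_*)$; since the half-plane solution satisfies $v(t, x, R) \ge 0 = \tilde v(t, x, R)$, comparison in $\Omega_R$ gives $u(t+\tau, x) \ge \tilde u(t, x)$ and $v(t+\tau, x, y) \ge \tilde v(t, x, y)$ for $0 \le y \le R$. Applying Theorem \ref{thm-asp-strip}(ii) to $(\tilde u, \tilde v)$, then letting $t \to +\infty$ followed by $R \to +\infty$ and using $(U_R, V_R) \to (\nu/\mu, 1)$ locally uniformly, yields \eqref{find lower bound}.

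The chief technical burden lies in the preparatory step rather than in the argument above: namely, obtaining adequate uniform-in-$y$ estimates on $(P_\alpha, Q_\alpha)$ (required so that the exponential barrier in (i) actually dominates a compactly supported datum in the full half-plane), and establishing the convergence $(U_R, V_R) \to (\nu/\mu, 1)$ together with the underlying uniqueness of the positive bounded stationary solution. Both are genuinely non-compact facts about the half-plane problem, for which standard elliptic compactness is insufficient and where Harnack-type inequalities, comparison with exponentially decaying barriers in $y$, and the monotone iteration furnished by the truncated problem are the natural tools.
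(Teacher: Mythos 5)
Your proposal is correct and follows essentially the same strategy as the paper: for (i), an exponential barrier $\gamma e^{-\alpha(x-c't)}(P_\alpha,Q_\alpha)$ built from the generalized principal eigenfunction pair of \eqref{generalized ep} together with the KPP inequality and Proposition \ref{cp}; for (ii), comparison with the truncated Cauchy problem \eqref{pb in strip} in $\overline\Omega_R$, using $c^*_R\nearrow c^*$ and Theorem \ref{thm-asp-strip}(ii), then passing to the limit $R\to\infty$ via Proposition \ref{prop3.9}(ii). The only deviation is cosmetic: the paper initializes the truncated comparison at $t=0$ with a $y$-cut-off $(u_0,\chi^B v_0)$ asserted to lie below $(U_B,V_B)$, whereas you wait until a time $\tau>0$ (where strict positivity on compacts is guaranteed by the strong maximum principle) and pick a small nontrivial $(u_*,v_*)\le\min\{(u(\tau,\cdot),v(\tau,\cdot,\cdot)),(U_R,V_R)\}$ before applying Theorem \ref{thm-asp-strip}(ii); your variant is equally valid and sidesteps the verification of the cut-off inequality. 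One small point of emphasis: what the comparison at $t=0$ in (i) actually needs from $Q_\alpha$ is positivity (hence a positive lower bound) on the compact support of $(u_0,v_0)$, which follows from the local boundedness and positivity obtained in Theorem \ref{thm 5.1}, not a global-in-$y$ estimate.
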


In the proof of Theorem \ref{thm-asp-half-plane}, the generalized principal  eigenfunction pair  $(P_\alpha,Q_\alpha)$ of \eqref{generalized ep} associated with $\lambda(\alpha)$  will play a crucial role in getting the upper bound for the spreading result. The lower bound follows from spreading results in the truncated domain via an asymptotic method.

Next, we state the concept of pulsating  fronts for problem \eqref{pb in half-plane} in the half-plane $\Omega$.

\begin{definition}\label{Def2.6}
	A rightward pulsating front of \eqref{pb in half-plane} connecting $(\nu/\mu,1)$ and $(0,0)$ with effective mean speed $c\in\mathbb{R}_+$ is a time-global classical solution $(u(t,x),v(t,x,y))=(\phi(x-ct,x),\psi(x-ct,x,y))$ of \eqref{pb in half-plane} such that the following periodicity property holds:
	\begin{equation*}
	%\label{periodicity-half-plane}
	u(t+\frac{k}{c},x)=u(t,x-k),\quad  v(t+\frac{k}{c},x,y)=v(t,x-k,y) \quad \forall k\in L\mathbb{Z},~ \forall t\in\mathbb{R},~\forall (x,y)\in\overline{\Omega}.
	\end{equation*}
	Moreover, the profile $(\phi(s,x),\psi(s,x,y))$ satisfies
\begin{align}
\label{PTF-limit condition}
\begin{cases}
\phi(-\infty,x)={\nu}/{\mu}, \ \phi(+\infty,x)=0 \ \text{uniformly in} \ x\in\mathbb{R},\\
\psi(-\infty,x,y)=1, \ \psi(+\infty,x,y)=0  \ \text{uniformly in} \ x\in\mathbb{R},  \ \text{locally uniformly in} \ y\in [0,+\infty),
\end{cases}
\end{align} 		
with $(\phi(s,x), \phi(s,x,y))$ being continuous in $s\in\mathbb{R}$.

	Similarly, a leftward pulsating front of \eqref{pb in half-plane} connecting  $(0,0)$ and $(\nu/\mu,1)$ with  effective mean speed $c\in\mathbb{R}_+$ is a time-global classical solution $(u(t,x),v(t,x,y))=(\phi(x+ct,x),\psi(x+ct,x,y))$ of \eqref{pb in half-plane} such that the following periodicity property holds:
\begin{equation*}
%\label{periodicity-half-plane-left}
\tilde u(t+\frac{k}{c},x)=\tilde u(t,x+k),\quad  \tilde v(t+\frac{k}{c},x,y)=\tilde v(t,x+k,y) \quad \forall k\in L\mathbb{Z},~ \forall t\in\mathbb{R},~\forall (x,y)\in\overline{\Omega}.
\end{equation*}
Moreover, the profile $(\phi(s,x),\psi(s,x,y))$ satisfies
\begin{align*}
%\label{PTF-limit condition-left}
\begin{cases}
\phi(-\infty,x)={\nu}/{\mu}, \ \phi(+\infty,x)=0 \ \text{uniformly in} \ x\in\mathbb{R},\\
\psi(-\infty,x,y)=1, \ \psi(+\infty,x,y)=0  \ \text{uniformly in} \ x\in\mathbb{R},  \ \text{locally uniformly in} \ y\in [0,+\infty),
\end{cases}
\end{align*} 		
with $(\phi(s,x), \phi(s,x,y))$ being continuous in $s\in\mathbb{R}$.
\end{definition}

Based on Theorem \ref{thm-PTF-in strip} and an asymptotic method, we can show:

\begin{theorem}\label{PTF-in half-plane}
	Let $c^*$ be defined as in Theorem \ref{thm-asp-half-plane}. Then
	the following statements are valid:
	\begin{enumerate}[(i)]
		\item Problem \eqref{pb in half-plane} admits a rightward pulsating front  connecting $(\nu/\mu, 1)$ to $(0,0)$ with wave profile $(\phi(s,x),\psi(s,x,y))$  being continuous and decreasing in $s$ if and only if $c\ge c^*$.
		\item  Problem \eqref{pb in half-plane} admits a leftward pulsating front  connecting $(0,0)$ to $(\nu/\mu, 1)$  with wave profile $(\phi(s,x),\psi(s,x,y))$  being continuous and increasing in $s$ if and only if $c\ge c^*$.
	\end{enumerate}
\end{theorem}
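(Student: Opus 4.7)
The plan is to obtain the half-plane pulsating fronts as $R\to+\infty$ limits of those furnished by Theorem \ref{thm-PTF-in strip} in the strip problem, and to deduce the converse from the spreading statement of Theorem \ref{thm-asp-half-plane}. For the converse, if a rightward pulsating front of speed $c<c^*$ existed, it would itself be a solution of \eqref{pb in half-plane}; by the strong maximum principle its profile at $t=0$, $(u_0,v_0)=(\phi(x,x),\psi(x,x,y))$, is non-trivial and strictly below $(\nu/\mu,1)$, so Theorem \ref{thm-asp-half-plane}(ii) would force $u(t,c't)\to\nu/\mu$ for every $c'\in(c,c^*)$, whereas the pulsating structure gives $u(t,c't)=\phi((c'-c)t,c't)\to 0$ since $\phi(+\infty,\cdot)=0$, a contradiction. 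I focus on the rightward case below; the leftward case is symmetric.

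For the direct implication, fix $c\ge c^*$. The construction of the generalized principal eigenvalue in Section \ref{generalized p-e} as a limit of $\lambda_R(\alpha)$ yields $c^*_R\le c^*\le c$ for every $R$ large enough, so Theorem \ref{thm-PTF-in strip} provides a rightward pulsating front $(u_R,v_R)(t,x,y)=(\phi_R(x-ct,x),\psi_R(x-ct,x,y))$ of \eqref{pb in strip} with $s\mapsto(\phi_R,\psi_R)$ continuous and non-increasing. I fix the translation ambiguity so that $\max_{x\in[0,L]}\phi_R(0,x)=\nu/(2\mu)$, which is admissible since $(U_R,V_R)\to(\nu/\mu,1)$ locally uniformly as $R\to+\infty$. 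Standard interior and boundary parabolic Schauder estimates, with constants independent of $R$ on compact subsets of $\overline\Omega$, give uniform $C^{1+\delta/2,\,2+\delta}_{\mathrm{loc}}$ bounds on $(u_R,v_R)$. A diagonal extraction along $R_n\to+\infty$ produces a bounded classical solution $(u,v)=(\phi(x-ct,x),\psi(x-ct,x,y))$ of \eqref{pb in half-plane}; the pulsating periodicity \eqref{periodicity-strip} and the monotonicity of $s\mapsto(\phi,\psi)$ pass to the limit, and the normalization guarantees that the limit profile is non-trivial.

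It remains to identify the limits \eqref{PTF-limit condition}. Monotonicity and boundedness yield pointwise limits $(\phi^\pm(x),\psi^\pm(x,y))$ as $s\to\pm\infty$, which are bounded nonnegative $L$-periodic (in $x$) stationary solutions of \eqref{pb in half-plane} satisfying $(0,0)\le(\phi^\pm,\psi^\pm)\le(\nu/\mu,1)$. A Liouville-type uniqueness result, proved by a sliding argument that exploits the strict monotonicity of $v\mapsto f(x,v)/v$ together with the positivity of the generalized principal eigenfunction pair $(P_\alpha,Q_\alpha)$ from Section \ref{generalized p-e}, reduces the possibilities to $(0,0)$ and $(\nu/\mu,1)$; the monotonicity in $s$ and the non-triviality enforced by the normalization then force $(\phi^-,\psi^-)=(\nu/\mu,1)$ and $(\phi^+,\psi^+)=(0,0)$. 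Uniform convergence in $x\in\mathbb{R}$ and locally uniform convergence in $y\ge 0$ finally follow from $L$-periodicity in $x$ and Dini's theorem. The main obstacle is precisely this identification step: the non-compactness in $y$ leaves room for monotone limits that are non-trivial $L$-periodic steady states decaying as $y\to+\infty$, and ruling this out demands the full Liouville property for \eqref{pb in half-plane}, whose proof hinges on the fine behavior of the generalized principal eigenfunctions constructed in Section \ref{generalized p-e}.
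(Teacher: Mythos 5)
Your proposal is correct and follows essentially the same route as the paper: build fronts in $\overline\Omega_{R_n}$ with $R_n\to+\infty$ using the strict inequality $c^*_R<c^*\le c$, fix the translation so that $\max_{[0,L]}\phi_{R_n}(0,\cdot)=\nu/(2\mu)$, extract a locally convergent subsequence via parabolic estimates, identify the $s\to\pm\infty$ limits, and derive the converse from Theorem \ref{thm-asp-half-plane}(ii) combined with \eqref{PTF-limit condition}. In fact your limit-identification step is spelled out more explicitly than in the paper, which simply asserts \eqref{PTF-limit condition} after passing to the limit; invoking the Liouville property to pin down the monotone limits at $s=\pm\infty$ to $(0,0)$ or $(\nu/\mu,1)$, and then using the normalization to assign them, is precisely the argument that is left implicit there. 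One small correction: the Liouville property (Theorem \ref{liouville}) is proved in Section 3 by a sliding argument together with the Hopf lemma and translation-compactness in $y$, and does \emph{not} rely on the generalized principal eigenfunction pair $(P_\alpha,Q_\alpha)$ of Section \ref{generalized p-e}; those eigenfunctions enter only in the proof of the upper spreading bound in Theorem \ref{thm-asp-half-plane}(i). This misattribution does not affect the validity of your argument, since the Liouville theorem is used here only as a black box.
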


{\bf Outline of the paper.} The remaining part of this paper is organized as follows. In Section \ref{section3}, we state some preliminary results for problem \eqref{pb in half-plane} in the half-plane and for problem \eqref{pb in strip} in the strip, respectively. We prove  Liouville-type results and investigate large time behaviors for problem \eqref{pb in half-plane} in Section 3.1 and for problem \eqref{pb in strip} in Section 3.2, respectively. 
Section \ref{section4} is dedicated to the proofs of Theorems \ref{thm-asp-strip} and  \ref{thm-PTF-in strip}. Particularly, the principal eigenvalue problem in $\overline\Omega_R$ is investigated, see Proposition \ref{principal eigenvalue in strip}. In Section \ref{section5}, we give the proofs of Theorems \ref{thm-asp-half-plane} and  \ref{PTF-in half-plane},  based on the study of the generalized principal eigenvalue problem and the results derived for truncated problems. 
%%%%%%%%%%%%%%%%%%%%%%%%%%%%%
%%%%%%%%%%%%%%%%%%%%%%%%%%%%
%%%%%%%%%%%%%%%%%%%%%%%%%%%%%%%%%
\section{Preliminaries}\label{section3}

In this section, we state some auxiliary results in the half-plane and in the truncated domain, respectively. Specifically,  two versions of  comparison principles that will be diffusely used throughout this paper and the well-posedness of the Cauchy problems   for problem \eqref{pb in half-plane} in the half-line and for problem \eqref{pb in strip} in the strip will be given below, respectively. Since the results  can be proved by slight modifications of the arguments in \cite{BRR2013-1}, we omit the proofs here. We also prove Liouville-type results and large time behavior of solutions to Cauchy problems  \eqref{pb in half-plane} and \eqref{pb in strip}, respectively. Finally,
we investigate the limiting property of the stationary solution in the strip when the width of the strip goes to infinity.

 In the sequel, a subsolution (resp. supersolution) is a couple satisfying the system in the classical sense with $=$ replaced by $\le$ (resp. $\ge$) which is continuous up to time 0.  We say that a function is a generalized supersolution (resp. subsolution) if it is the minimum (resp. maximum) of a finite number of supersolutions (resp. subsolutions).

\subsection{Preliminary results in the half-plane}

\begin{proposition}%[\cite{BRR2013-1}]
	\label{cp}
	Let $(\underline{u},\underline{v})$ and $(\overline{u}, \overline{v})$ be respectively a subsolution bounded from above and a supersolution bounded from below of \eqref{pb in half-plane} satisfying $\underline{u}\le\overline{u}$ and $\underline{v}\le\overline{v}$ in $\overline\Omega$ at $t=0$. Then, either $\underline{u}<\overline{u}$ and $\underline{v}<\overline{v}$ in $\overline\Omega$ for all $t>0$, or there exists $T>0$ such that $(\underline{u},\underline{v})=(\overline{u}, \overline{v})$ in $\overline\Omega$ for $t\le T$.
\end{proposition}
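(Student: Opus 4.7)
The plan is to argue in two stages: a \emph{weak} comparison $\underline u\le\overline u$, $\underline v\le\overline v$ on $[0,+\infty)\times\overline\Omega$, followed by the stated \emph{strong} dichotomy, obtained by combining the parabolic strong maximum principle with the Hopf boundary lemma and exploiting the Robin-type coupling on the road $\{y=0\}$.

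First I would set $W_1:=\overline u-\underline u$ and $W_2:=\overline v-\underline v$; these are nonnegative at $t=0$ and bounded from below on any slab $[0,T]\times\overline\Omega$ by the hypothesis on the sub/supersolutions. Writing $f(x,\overline v)-f(x,\underline v)=\gamma(t,x,y)\,W_2$ with $\gamma\in L^\infty$ via the mean value theorem (using the $C^1$ regularity of $f$ and the local boundedness of $\overline v,\underline v$) and subtracting the defining inequalities of sub- and supersolutions, one obtains the coupled linear system
\begin{align*}
\partial_t W_1-D\partial_{xx}W_1+\mu W_1 &\ge \nu W_2(t,x,0), & t>0,\ x\in\mathbb{R},\\
\partial_t W_2-d\Delta W_2-\gamma\,W_2 &\ge 0, & t>0,\ (x,y)\in\Omega,\\
-d\partial_y W_2(t,x,0)+\nu W_2(t,x,0) &\ge \mu W_1, & t>0,\ x\in\mathbb{R},
\end{align*}
with $W_1(0,\cdot)\ge 0$ and $W_2(0,\cdot,\cdot)\ge 0$. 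After the exponential rescaling $\widetilde W_i:=e^{-\lambda t}W_i$ with $\lambda$ sufficiently large, the zero-order coefficients all acquire the sign required for a maximum principle argument.

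Next I would establish $\widetilde W_1,\widetilde W_2\ge 0$ on $[0,T]\times\overline\Omega$ for arbitrary $T>0$ by the classical penalization trick to handle the unboundedness of $\overline\Omega$ in both $x$ and $y$. Pick an auxiliary function $\Phi(t,x,y):=e^{\kappa t}(1+x^2+y^2)$ with $\kappa$ large enough that $(\Phi,\Phi)$ is a strict supersolution of the homogeneous form of the system above, including the boundary inequality, and set $\widetilde W_i^\varepsilon:=\widetilde W_i+\varepsilon\Phi$. Then $\widetilde W_i^\varepsilon$ satisfies the strict analogue of the same inequalities, is strictly positive at $t=0$, and blows up as $|x|+y\to\infty$, so any negative value of $\min(\widetilde W_1^\varepsilon,\widetilde W_2^\varepsilon)$ would be attained as a global minimum at some finite $(t_\star,x_\star,y_\star)$ with $t_\star>0$. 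A case-by-case analysis of such a minimum (interior of the field $y_\star>0$, on the road for $\widetilde W_1^\varepsilon$, or on the boundary $y_\star=0$ for $\widetilde W_2^\varepsilon$) contradicts the strict inequalities, using the parabolic sign conditions in the first two cases and Hopf-type input in the third, provided $\lambda$ is chosen sufficiently large (depending on $\mu,\nu,\|\gamma\|_{L^\infty}$) and one uses that at a joint minimum the values $\widetilde W_1^\varepsilon(t_\star,x_\star)$ and $\widetilde W_2^\varepsilon(t_\star,x_\star,0)$ mutually bound each other below. Sending $\varepsilon\to 0$ yields $W_1,W_2\ge 0$.

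Once the weak comparison is in hand, the dichotomy follows from the parabolic strong maximum principle and Hopf boundary lemma applied case-wise to the earliest point at which a vanishing could first occur. If $W_2(t_0,x_0,y_0)=0$ at an interior $y_0>0$, the strong maximum principle forces $W_2\equiv 0$ on $\Omega\times[0,t_0]$; the Robin inequality then reduces to $0\ge\mu W_1$ on the road, whence $W_1\equiv 0$ on $\mathbb{R}\times[0,t_0]$. If $W_1(t_0,x_0)=0$, the strong maximum principle for the road equation (with $\nu W_2(\cdot,\cdot,0)\ge 0$ playing the role of a non-negative source) gives $W_1\equiv 0$ on $\mathbb{R}\times[0,t_0]$; feeding this back into the road equation gives $W_2(\cdot,\cdot,0)\equiv 0$, and Hopf together with the strong maximum principle then propagates this to $W_2\equiv 0$ in $\Omega\times[0,t_0]$. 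The borderline case $W_2(t_0,x_0,0)=0$ on the road is resolved by Hopf: either $W_2\equiv 0$ in $\Omega\times[0,t_0]$ (reducing to the first case), or Hopf would force $\partial_y W_2(t_0,x_0,0)>0$, contradicting $-d\partial_y W_2(t_0,x_0,0)\ge\mu W_1(t_0,x_0)\ge 0$. The main technical obstacle throughout is precisely the Robin-type coupling on $\{y=0\}$: neither the interior parabolic maximum principle nor the Hopf boundary lemma alone suffices, and the two have to be orchestrated together on $W_1$ and on the trace of $W_2$, both in the penalization step (to rule out a negative minimum on the road) and in the strong comparison step (to propagate a zero of one component into the other).
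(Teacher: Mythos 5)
You have the right overall architecture (weak comparison by penalization followed by the strong maximum principle and Hopf lemma for the dichotomy), and the paper itself does not give a proof here — it defers to Berestycki--Roquejoffre--Rossi \cite{BRR2013-1}, whose argument has a similar flavor. However, there is a genuine gap in your penalization step that propagates into the case analysis.

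The function pair $(\Phi,\Phi)$ with $\Phi(t,x,y)=e^{\kappa t}(1+x^2+y^2)$ is \emph{not} a strict supersolution of the exchange boundary inequality. At $y=0$ one has $\partial_y\Phi(t,x,0)=0$, so the boundary quantity is $-d\,\partial_y\Phi+\nu\Phi-\mu\Phi=(\nu-\mu)\,e^{\kappa t}(1+x^2)$, which is $\le 0$ whenever $\mu\ge\nu$. No choice of $\kappa$ or $\lambda$ repairs this, because the Robin condition contains no time derivative and is therefore untouched by the rescaling $W_i\mapsto e^{-\lambda t}W_i$. The same defect resurfaces in your third case (a negative minimum of $\widetilde W_2^\varepsilon$ on $\{y=0\}$): there the boundary inequality together with $\partial_y\widetilde W_2^\varepsilon(t_\star,x_\star,0)\ge 0$ and $\widetilde W_1^\varepsilon(t_\star,x_\star)\ge m^*$ yields only $\nu m^*>\mu m^*$, i.e. $\nu<\mu$ since $m^*<0$. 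When $\mu>\nu$ this is consistent, not contradictory, so the minimum on the road is never ruled out; your remark that $\lambda$ can be chosen large ``depending on $\mu,\nu$'' cannot save the argument, since $\lambda$ does not appear on the boundary.

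The fix is a rescaling of one component. Replace $W_1$ by $\widehat W_1:=cW_1$ (equivalently, take $\Phi_2:=B\Phi_1$ in the penalization) with $c>\mu/\nu$. The road inequality becomes $\partial_t\widehat W_1-D\partial_{xx}\widehat W_1+(\mu+\lambda)\widehat W_1\ge c\nu\,W_2(\cdot,0)$, and the boundary becomes $-d\partial_yW_2+\nu W_2\ge(\mu/c)\widehat W_1$. A negative boundary minimum of $\widetilde W_2^\varepsilon$ now forces $\nu<\mu/c$, contradicting $c>\mu/\nu$; a negative road minimum of $\widehat W_1^\varepsilon$ forces $\mu+\lambda<c\nu$, contradicting $\lambda\ge c\nu-\mu$. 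With this modification your weak comparison goes through, and the subsequent strong-maximum-principle/Hopf dichotomy is essentially sound as you sketched it.
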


\begin{proposition}%[\cite{BRR2013-1}]
	\label{cp--generalized sub}
	Let $E\subset (0,+\infty)\times\mathbb{R}$ and $F\subset (0,+\infty)\times\Omega$ be two open sets and let $(u_1, v_1)$ and $(u_2,v_2)$ be two subsolutions of \eqref{pb in half-plane} bounded from above, satisfying 
	\begin{equation*}
	u_1\le u_2 \ \text{on} \ (\partial E)\cap((0,+\infty)\times\mathbb{R}),\qquad v_1\le v_2\ \text{on} \ (\partial F)\cap((0,+\infty)\times\Omega).
	\end{equation*} 
	If the functions $\underline{u}$, $\underline{v}$ defined by
	\begin{align*}
	\underline{u}(t,x)&:=
	\begin{cases}
	\max\{u_1(t,x), u_2(t,x)\}\qquad &\text{if}\ (t,x)\in\overline{E}\\
	u_2(t,x)\qquad&\text{otherwise}
	\end{cases}\\
	\underline{v}(t,x,y)&:=
	\begin{cases}
	\max\{v_1(t,x,y), v_2(t,x,y)\}~ &\text{if}\ (t,x,y)\in\overline{F}\\
	v_2(t,x,y)\qquad&\text{otherwise}
	\end{cases}
	\end{align*}
	satisfy
	\begin{align*}
	\underline{u}(t,x)>u_2(t,x)\Rightarrow \underline{v}(t,x,0)\ge v_1(t,x,0),\\
		\underline{v}(t,x,0)>v_2(t,x,0)\Rightarrow \underline{u}(t,x)\ge u_1(t,x),
	\end{align*}
	then, any supersolution $(\overline{u},\overline{v})$ of \eqref{pb in half-plane} bounded from below and such that $\underline{u}\le\overline{u}$ and $\underline{v}\le\overline{v}$ in $\overline\Omega$ at $t=0$, satisfies $\underline{u}\le\overline{u}$ and $\underline{v}\le\overline{v}$ in $\overline\Omega$ for all $t>0$.
\end{proposition}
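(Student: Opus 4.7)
The plan is to adapt the contradiction-at-first-contact method used for the classical comparison principle (Proposition \ref{cp}) to the present non-smooth setting. The two compatibility hypotheses are precisely what is required to handle the coupling between road and field data when the ``active'' index in the definition of $\underline{u}$ differs from the one used for $\underline{v}$.

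First I would introduce a strict perturbation: replace $(\overline{u}, \overline{v})$ by the couple $(\overline{u}_\varepsilon, \overline{v}_\varepsilon) := (\overline{u} + \varepsilon e^{Kt}, \overline{v} + \varepsilon e^{Kt})$ for arbitrary $\varepsilon > 0$ and a constant $K$ large enough (depending on $\mu$, $\nu$ and $\sup |f_v|$) so that this pair strictly satisfies each of the three (in)equalities in \eqref{pb in half-plane} as a supersolution. Once $\underline{u} \le \overline{u}_\varepsilon$ and $\underline{v} \le \overline{v}_\varepsilon$ are established for all $t > 0$, the conclusion follows by letting $\varepsilon \downarrow 0$. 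Assuming the non-strict ordering fails, boundedness together with continuity yields a smallest time $t_0 > 0$ at which equality is attained; at such a contact point one picks an index $i \in \{1,2\}$ so that $\underline{u}(t_0, x_0) = u_i(t_0, x_0)$ or $\underline{v}(t_0, x_0, y_0) = v_i(t_0, x_0, y_0)$, and $\overline{u}_\varepsilon - u_i$ (respectively $\overline{v}_\varepsilon - v_i$) becomes a classical non-negative supersolution of a linear parabolic equation, vanishing at that contact point.

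The case analysis proceeds as follows. If the contact is interior to the field ($y_0 > 0$), the strong maximum principle applied to the linearisation of the $v$-equation around an intermediate value yields a contradiction with the strict initial ordering inherited from $\varepsilon > 0$. If the contact is for $u$ on the road, the same principle is applied to the $u$-equation, whose source term $\nu(\overline{v}_\varepsilon(\cdot, \cdot, 0) - v_i(\cdot, \cdot, 0))$ has the right sign: the first compatibility hypothesis, when $i = 1$ (i.e., $\underline{u} > u_2$ at the contact point), guarantees $\underline{v}(\cdot, \cdot, 0) \ge v_1(\cdot, \cdot, 0)$, hence $\overline{v}_\varepsilon(\cdot, \cdot, 0) \ge v_1(\cdot, \cdot, 0)$ up to $t_0$. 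The last and delicate case is contact for $v$ at $(t_0, x_0, 0)$ on the road: Hopf's lemma applied to $\overline{v}_\varepsilon - v_i$ in a one-sided parabolic neighborhood inside $\{y \ge 0\}$ gives
\begin{equation*}
\partial_y \overline{v}_\varepsilon(t_0, x_0, 0) > \partial_y v_i(t_0, x_0, 0),
\end{equation*}
and substituting into the boundary condition together with $v_i(t_0, x_0, 0) = \overline{v}_\varepsilon(t_0, x_0, 0)$ yields $\overline{u}_\varepsilon(t_0, x_0) < u_i(t_0, x_0)$. Combined with $\underline{u} \le \overline{u}_\varepsilon$ up to $t_0$, this forces $\underline{u}(t_0, x_0) < u_i(t_0, x_0)$, while the second compatibility hypothesis (if $i = 1$, then $\underline{v}(t_0, x_0, 0) > v_2(t_0, x_0, 0)$ implies $\underline{u}(t_0, x_0) \ge u_1(t_0, x_0)$) provides the needed contradiction; the sub-case in which equality $v_1 = v_2$ holds at the contact point is handled by choosing $i = 2$ instead.

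The main obstacle is the Hopf step in the road-contact case: since $\underline{v}$ is generally non-smooth, Hopf's lemma must be applied to the classical subsolution $v_i$ rather than to $\underline{v}$ directly, and this requires verifying $\overline{v}_\varepsilon - v_i \ge 0$ in a one-sided parabolic neighborhood of the contact point up to time $t_0$. Organising this bookkeeping correctly, with the choice of $i$ depending on whether the neighborhood lies in $\overline{F}$, on $\partial F$, or outside, and on the sign of $v_1 - v_2$ there, is the principal technical challenge. It is precisely the two compatibility hypotheses that make this selection consistent and thereby close the argument.
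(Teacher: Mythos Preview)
The paper does not give its own proof of this proposition; it states that the preliminary comparison results ``can be proved by slight modifications of the arguments in \cite{BRR2013-1}'' and omits all details. Your contradiction-at-first-contact strategy is the natural one and matches the approach of \cite{BRR2013-1}, and your reading of the two compatibility hypotheses is correct: they are precisely what allows the Hopf/road coupling step to close when the active index in $\underline{u}$ differs from that in $\underline{v}$.

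There is, however, a genuine gap. You assert that ``boundedness together with continuity yields a smallest time $t_0>0$ at which equality is attained''. On the unbounded spatial domain (the road $\mathbb{R}$ and the field $\Omega=\mathbb{R}\times(0,+\infty)$) this fails in general: the infimum of $\overline{u}_\varepsilon-\underline{u}$ or of $\overline{v}_\varepsilon-\underline{v}$ over $\{t\le t_0\}$ can be zero without being attained, because a minimising sequence may escape to spatial infinity. The remedy in \cite{BRR2013-1} is to augment the perturbation with a spatial weight diverging at infinity (a Phragm\`en--Lindel\"of type barrier), so that the difference is bounded away from zero outside a compact set; only then does a genuine first contact point exist, after which your case analysis can proceed.

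A smaller technical point: the perturbation $(\overline{u}+\varepsilon e^{Kt},\,\overline{v}+\varepsilon e^{Kt})$ with the \emph{same} additive term on both components is not in general a supersolution at the exchange boundary. The third relation in \eqref{pb in half-plane} picks up an extra $(\nu-\mu)\varepsilon e^{Kt}$ on the right, which has the wrong sign when $\mu>\nu$, and enlarging $K$ does not help since no time derivative appears there. You need to weight the two components differently (e.g.\ by the ratio $\nu/\mu$), as the paper itself does when constructing constant super/subsolutions such as \eqref{super-constant sol}.
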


%The well-posedness especially the existence of the classical solution  for the Cauchy problem associated with \eqref{pb in half-plane} follows from a similar arguments as in \cite[Proposition 3.1]{BRR2013-1}. Uniqueness comes from comparison principle Proposition \ref{cp} above.  %Therefore, we just give the statement here and omit the details of the proof. 

\begin{proposition}
	\label{wellposedness-half-plane}
	The Cauchy problem \eqref{pb in half-plane} with nonnegative,  bounded and continuous initial datum $(u_0,v_0)\not\equiv(0,0)$ admits a unique nonnegative classical bounded solution $(u,v)$ for $t\ge 0$ and $(x,y)\in\overline\Omega$.
\end{proposition}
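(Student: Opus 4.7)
The plan is to combine a global pair of barriers with a domain-exhaustion argument, and then to invoke the comparison principle of Proposition~\ref{cp} for uniqueness. Nonnegativity and boundedness will be built in \emph{a priori} through the barriers, so the only real task is to produce \emph{some} classical solution on $[0,+\infty)\times\overline\Omega$.

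For the barriers, the trivial pair $(\underline u,\underline v)=(0,0)$ is a subsolution thanks to $f(\cdot,0)\equiv 0$, and for a supersolution I would fix $M\ge\max\{1,\|v_0\|_\infty,(\mu/\nu)\|u_0\|_\infty\}$ and take the constant pair $(\bar u,\bar v):=(\nu M/\mu,\,M)$. The reaction line is handled by the KPP sign hypothesis $f(\cdot,M)\le 0$ for $M\ge 1$, while the $u$-equation and the exchange relation on $\{y=0\}$ both close with equality thanks to the identity $\mu\bar u=\nu\bar v$. By construction $(0,0)\le(u_0,v_0)\le(\bar u,\bar v)$ in $\overline\Omega$.

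For existence, I would exhaust $\Omega$ by bounded truncations $\Omega_n:=(-n,n)\times(0,n)$ and solve \eqref{pb in half-plane} on each $\Omega_n$ with homogeneous Dirichlet data on the artificial lateral and upper boundaries, the unchanged coupled condition on $\{y=0\}$, and truncated initial data agreeing with $(u_0,v_0)$ on $\Omega_{n-1}$. Classical well-posedness on each bounded cylinder follows from standard parabolic theory for systems with Robin-type trace couplings. Comparison applied within each $\Omega_n$ traps $(u_n,v_n)$ between $(0,0)$ and $(\bar u,\bar v)$, so interior Schauder estimates together with boundary Schauder estimates at $\{y=0\}$ yield uniform $C^{1+\delta/2,\,2+\delta}_{\mathrm{loc}}$ bounds. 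An Arzel\`a--Ascoli diagonal extraction then produces a classical solution on $[0,+\infty)\times\overline\Omega$ attaining $(u_0,v_0)$ continuously at $t=0$ via the barrier pair. Uniqueness is immediate from Proposition~\ref{cp} applied to any two candidate solutions in both directions.

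The main obstacle is the joint treatment of the coupled trace condition on $\{y=0\}$ together with the non-compactness of the half-plane: off-the-shelf parabolic existence theorems do not directly cover the Robin-type coupling between the one-dimensional equation on the road and the two-dimensional equation in the field. This is precisely the technical point handled in \cite{BRR2013-1} in the homogeneous case; since the $x$-dependence here enters only through the $C^{1,\delta}$ reaction $f(x,v)$, which is bounded and Lipschitz on the invariant range $[0,M]$, their argument transfers with only routine modifications — in line with the author's decision to omit the proof.
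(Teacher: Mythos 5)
Your proposal is correct and mirrors what the paper does for the analogous strip result (Proposition~\ref{wellposedness-strip}): trap the solution between $(0,0)$ and a constant supersolution of the form $K(\nu,\mu)$, exhaust the domain by bounded boxes with Dirichlet data on the artificial boundaries while keeping the exchange condition on $\{y=0\}$, invoke uniform interior and boundary Schauder estimates, extract a limit by Arzel\`a--Ascoli, and use the comparison principle of Proposition~\ref{cp} for uniqueness. The paper omits the half-plane proof and points to \cite{BRR2013-1}, whose Appendix~A supplies exactly the bounded-domain well-posedness with the Robin-type trace coupling that you identify as the key technical ingredient.
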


%%%%%%%%%%%%%%%%%%%%%%%%%%%%%%%%%%%%%%5

%Next, we derive a Liouville-type result of stationary solutions for problem \eqref{pb in half-plane}, namely,
%\begin{equation}
%\label{elliptic in half-plane}
%\begin{cases}
% -D\partial_{xx}U=\nu V(x,0)-\mu U, & x\in \mathbb{R},  \\
% -d\Delta V =f(x,V),  &(x,y)\in\Omega,\\
%-d\partial_y V(x,0)=\mu U(x)-\nu V(x,0), & x\in \mathbb{R}.
%\end{cases}
%\end{equation}

%%%%%%%%%%%%%%%%%%%%%%%%%%
%%%%%%%%%%%%%%%%%%%%%%%%%%%%

Now we  prove a Liouville-type result for the stationary problem corresponding to \eqref{pb in half-plane} as well as the long time behavior of solutions for the Cauchy problem \eqref{pb in half-plane}.
\begin{theorem}
	\label{liouville}
Problem \eqref{pb in half-plane} has a unique positive bounded stationary solution $(U,V)\equiv({\nu}/{\mu}, 1)$. Moreover, let $(u,v)$ be the solution of \eqref{pb in half-plane} with a nonnegative,  bounded and continuous initial datum $(u_0, v_0)\not\equiv(0,0)$, then 
 	\begin{equation}
 	\label{large time}
 	\lim\limits_{t\to +\infty}(u(t,x),v(t,x,y))= (\nu/\mu, 1)\quad\text{locally uniformly for}\ (x,y)\in\overline{\Omega}.
  	\end{equation}
\end{theorem}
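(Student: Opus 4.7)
My plan is to deduce both uniqueness and the large-time behavior from a single squeeze argument. Existence of a positive bounded stationary solution is immediate: since $f(\cdot,1)\equiv 0$ and $\nu\cdot 1=\mu\cdot(\nu/\mu)$, the constant pair $(\nu/\mu,1)$ solves the stationary system. Uniqueness will follow at the end, because any positive bounded stationary solution, viewed as a Cauchy datum, is a time-independent solution of \eqref{pb in half-plane} and so must coincide with its own large-time limit.

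For the upper bound of the squeeze, since $(u_0,v_0)$ is bounded I choose $K\ge 1$ with $(u_0,v_0)\le(K\nu/\mu,K)$. The KPP assumption makes $(K\nu/\mu,K)$ a stationary supersolution (the first and third equations of \eqref{pb in half-plane} hold with equality, while $f(\cdot,K)\le 0$ in the second), so Proposition~\ref{cp} yields $(u,v)\le(K\nu/\mu,K)$ for all $t\ge 0$. Let $(\bar u,\bar v)$ denote the solution issued from $(K\nu/\mu,K)$; comparing $(\bar u(t+h,\cdot),\bar v(t+h,\cdot))$ with $(\bar u(t,\cdot),\bar v(t,\cdot))$ at $t=0$ shows that $t\mapsto(\bar u(t),\bar v(t))$ is nonincreasing, so it converges locally uniformly to a bounded nonnegative $L$-periodic (in $x$) stationary solution $(\bar U,\bar V)$.

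For the lower bound I would exploit Theorem~\ref{thm2.1}. By the parabolic strong maximum principle applied to \eqref{pb in half-plane}, at some $t_0>0$ the solution is strictly positive on $\overline\Omega$; after a harmless shift of the origin of time I may assume $(u_0,v_0)>0$ on $\overline\Omega$. Fix $R>R_0$ and pick any nontrivial, continuous, compactly supported $(u_*,v_*)$ with $0\le(u_*,v_*)\le(u_0,v_0)$ on $\overline\Omega_R$. Let $(u_R,v_R)$ solve \eqref{pb in strip} with initial datum $(u_*,v_*)$. Since $v(t,x,R)\ge 0= v_R(t,x,R)$, the restriction of $(u,v)$ to the strip is a supersolution of \eqref{pb in strip}, so the strip comparison principle gives $(u_R,v_R)\le(u,v)$ in $\overline\Omega_R$. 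Theorem~\ref{thm2.1} provides $(u_R,v_R)\to(U_R,V_R)$ locally uniformly, hence $\liminf_{t\to+\infty}(u,v)\ge(U_R,V_R)$ locally uniformly in $\overline\Omega_R$. Passing to the limit $R\to+\infty$ and invoking the limiting property $(U_R,V_R)\to(\nu/\mu,1)$ (the closing result of Section~\ref{section3}), I conclude $\liminf_{t\to+\infty}(u,v)\ge(\nu/\mu,1)$ locally uniformly.

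Applied to $(\bar u,\bar v)$, this lower bound gives $(\bar U,\bar V)\ge(\nu/\mu,1)$, and it remains to show the reverse inequality; this is the main obstacle. I would settle it by a sliding argument: set $K^*:=\max\bigl(\sup\bar V,(\mu/\nu)\sup\bar U\bigr)$ and assume for contradiction $K^*>1$. If the supremum of $\bar V$ is attained at an interior point $(x_0,y_0)$ with $y_0>0$, the strong maximum principle for $-d\Delta\bar V=f(x,\bar V)$ forces $\bar V\equiv K^*$, contradicting $f(\cdot,K^*)<0$. If it is attained at a boundary point $(x_0,0)$, the Hopf lemma gives $\partial_y\bar V(x_0,0)<0$, which via the exchange condition yields $\bar U(x_0)>K^*\nu/\mu$, contradicting the definition of $K^*$. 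The subtle case is when the supremum is only approached along a sequence with $y_n\to+\infty$; then I translate $\bar V$ by $y_n$ and use the $L$-periodicity of $f$ in $x$ together with elliptic regularity to extract a bounded limit solving the same equation on all of $\mathbb{R}^2$, on which the strong maximum principle again forces the constant value $K^*$, a contradiction. A symmetric analysis handles the case where the tight supremum lies on the $\bar U$ side. Therefore $K^*\le 1$, so $(\bar U,\bar V)=(\nu/\mu,1)$, and the squeeze $\liminf\ge(\nu/\mu,1)\ge\limsup$ delivers \eqref{large time}.
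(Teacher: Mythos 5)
Your approach splits the squeeze into an upper bound coming from the constant supersolution plus a sliding argument, and a lower bound coming from comparison with the truncated problem. The sliding argument for the upper bound is sound and is genuinely different from the paper's uniqueness argument (the paper instead compares two arbitrary positive stationary solutions via the scalar $\theta^\ast:=\sup\{\theta>0:(U_1,V_1)>\theta(U_2,V_2)\}$ and shows $\theta^\ast\ge 1$; your supremum argument gives the one-sided bound $(\bar U,\bar V)\le(\nu/\mu,1)$ more directly). The uniqueness-from-convergence step at the end is also fine.

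The genuine gap is in the lower bound. You invoke "the limiting property $(U_R,V_R)\to(\nu/\mu,1)$ (the closing result of Section~\ref{section3})", that is, Proposition~\ref{prop3.9}~(ii). But look at the proof of Proposition~\ref{prop3.9}~(ii) in the paper: it passes $R\to+\infty$, obtains a positive bounded stationary solution $(U,V)$ of \eqref{pb in half-plane}, and then concludes $(U,V)=(\nu/\mu,1)$ \emph{by citing Theorem~\ref{liouville}}. So your lower bound is circular. Your sliding argument, taken alone, only yields the upper bound $(\bar U,\bar V)\le(\nu/\mu,1)$; it cannot give a lower bound on $\liminf$ unless you first know the infimum of the candidate stationary solution is strictly positive. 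That uniform positivity, $\inf_{\overline\Omega}V>0$ (and then $\inf_{\mathbb R}U>0$), is precisely the step the paper establishes via the hypothesis $m=\min_{[0,L]}f_v(x,0)>0$ and the ball-subsolution $\varepsilon\varphi_\rho$ (following \cite{BRR2013-1,GMZ2015}), and it is what ultimately makes the $\theta^\ast$-uniqueness argument --- or any lower sliding argument --- work. Without supplying an independent proof of either $\inf V>0$ or of $(U_R,V_R)\to(\nu/\mu,1)$, the squeeze does not close.
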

\begin{proof}%[Proof of Theorem \ref{liouville}]
	Let $(u,v)$ be the  solution, given in Proposition \ref{wellposedness-half-plane}, of the Cauchy problem \eqref{pb in half-plane} starting from a nonnegative, bounded  and  continuous initial datum $(u_0, v_0)\not\equiv(0,0)$.		
	We first show the existence of the nontrivial nonnegative and bounded stationary solution of \eqref{pb in half-plane}, by using a sub- and supersolution argument. Take $\rho>0$ large enough such that the principal eigenvalue of $-\Delta$ in $B_\rho\subset\mathbb{R}^2$ with Dirichlet boundary condition is less than $m/(2d)$ (recall that $m=\min_{[0,L]}f_v(x,0)>0$). Then, the associated principal eigenfunction $\varphi_\rho$ satisfies
$-d\Delta \varphi_\rho\le m\varphi_\rho/2$ in $B_\rho$.
	Hence, there exists $\varep_0>0$ such that the function $\varep\varphi_\rho$ satisfies $-d\Delta(\varep\varphi_\rho)\le f(x,\varep\varphi_\rho)$ in $B_\rho$  for all $\varep\in (0,\varep_0]$. Define $\underline V(x,y)=\varep\varphi_\rho(x,y-\rho-1)$ in $B_\rho(0,\rho+1)$ and extend it by 0 outside.
	The function pair $(0,\underline{V})$ is nonnegative, bounded and continuous. On the other hand, Proposition \ref{cp} implies that $(u,v)$ is positive for $t>0$ and $(x,y)\in\overline\Omega$. Hence, up to decreasing $\varep$ if necessary, we have that $(0, \underline{V})$ is below $(u(1,\cdot),v(1,\cdot,\cdot))$ in $\overline\Omega$. Let $(\underline{u}, \underline{v})$ be the unique bounded classical solution of \eqref{pb in half-plane} with initial condition $(0, \underline{V})$.  It then follows from Proposition \ref{cp--generalized sub} that $(\underline{u},\underline{v})$ is  increasing in $t$ and $(\underline u(t,x),\underline v(t,x,y))<(u(t+1,x),v(t+1,x,y))$ for $t>0$ and $(x,y)\in\overline\Omega$.  
	By parabolic estimates, $(\underline{u},\underline{v})$ converges locally uniformly in $\overline\Omega$ as $t\to+\infty$ to a stationary solution $(U_1, V_1)$ of \eqref{pb in half-plane} satisfying 
	\begin{equation}
	\label{lower-0}
	0<U_1\le\liminf\limits_{ t\to+\infty}u(t,x),\qquad \underline{V}<V_1\le\liminf\limits_{ t\to+\infty}v(t,x,y).
	\end{equation}

	On the other hand, define 
	\begin{equation}\label{super-constant sol}
	(\overline{U},\overline{V}):=\max\bigg\{\frac{\Vert u_0\Vert_{L^{\infty}(\mathbb{R})}}{\nu},\frac{\Vert v_0\Vert_{L^{\infty}(\overline\Omega)}+1}{\mu}\bigg\}(\nu,\mu).
	\end{equation}
	Obviously, $(\overline{U},\overline{V})$ is a supersolution of \eqref{pb in half-plane} and satisfies $(\overline{U},\overline{V})\ge (u_0,v_0)$. Let $(\overline{u},\overline{v})$ be the solution of \eqref{pb in half-plane} with initial datum $(\overline{U}, \overline{V})$, then Proposition \ref{cp} implies that $(\overline{u},\overline{v})$ is non-increasing in $t$. From parabolic estimates, $(\overline{u},\overline{v})$ converges locally uniformly in $\overline\Omega$ as $t\to+\infty$ to a stationary solution $(U_2, V_2)$ of \eqref{pb in half-plane} satisfying 
	\begin{equation}
	\label{upper-0}
	\limsup\limits_{ t\to+\infty}u(t,x)\le U_2\le \overline U,\qquad \limsup\limits_{ t\to+\infty}v(t,x,y)\le V_2\le \overline V.
	\end{equation}
 Therefore, the existence of nontrivial nonnegative and bounded stationary solutions of \eqref{pb in half-plane} is proved.

 Let $(U,V)$ be an arbitrary nontrivial nonnegative and bounded stationary solution of \eqref{pb in half-plane}. 
 In the spirit of \cite[Proposition 4.1]{BRR2013-1} and \cite[Lemma 2.5]{GMZ2015}, one can further conclude that  $\inf_{\overline\Omega}V>0$ and then $\inf_{\mathbb{R}}U>0$, by using the hypothesis $m:=\min_{[0,L]}f_v(x,0)>0$.

 Next, we show the uniqueness. 
Assume that $(U_1, V_1)$ and $(U_2, V_2)$ are  two distinct positive bounded stationary solutions of \eqref{pb in half-plane}. Then, there is $\varep>0$ such that $U_i\ge\varep$ in $\mathbb{R}$ and $V_i\ge\varep$ in $\overline\Omega$ for $i=1,2$. Therefore, we can define
	\begin{equation*}
	\theta^*:=\sup\big\{\theta>0:~ (U_1, V_1)>\theta(U_2, V_2)~~ \text{in} ~\overline\Omega\big\}>0.
	\end{equation*} 
Assume that $\theta^*<1$. Set
$P:=U_1-\theta^*U_2\ge 0$ in $\mathbb{R}$ and $Q:=V_1-\theta^*V_2\ge 0$ in $\overline\Omega$.
From the definition of $\theta^*$, there exists a sequence $(x_n,y_n)_{n\in\mathbb{N}}$ in $\overline\Omega$ such that
	$P(x_n)\to 0$, or $Q(x_n,y_n)\to 0$ as $n\to+\infty$. 
	
Assume that the second case occurs, we claim that $(y_n)_{n\in\mathbb{N}}$ is bounded. Assume by contradiction that $y_n\to+\infty$ as $n\to+\infty$, then, up to extraction of a subsequence, the functions $V_{i,n}(x,y):=V_i(x,y+y_n)$ $(i=1,2)$ converge locally uniformly to positive bounded functions $V_{i,\infty}$ solving  $-d\Delta V_{i,\infty}=f(x,V_{i,\infty})$ in $\mathbb{R}^2$, which implies that $V_{i,\infty}\equiv 1$ in $\mathbb{R}^2$,  because of the KPP hypothesis on $f$. Then, it follows that $Q(x_n,y_n)\to 1-\theta^*>0$ as $n\to+\infty$, which is a contradiction. Thus, the sequence $(y_n)_{n\in\mathbb{N}}$ must be bounded. We then distinguish two subcases.
 
	Assume  that, up to a subsequence, $x_n\to\overline x\in\mathbb{R}$  and $y_n\to \overline y\in[0,+\infty)$ as $n\to+\infty$. By continuity, one has $Q\ge 0$ in $\overline\Omega$ and $Q(\overline x, \overline y)=0$. Suppose that $\overline y>0$. Notice that $Q$ satisfies
		\begin{equation}
		\label{Q-1}
		-d\Delta Q=f(x,V_1)-\theta^*f(x,V_2)~~\text{in}~\Omega,
		\end{equation}
	Since  $f(x,v)/v$ is  decreasing in $v>0$ for all $x\in\mathbb{R}$ and since $\theta^*<1$, it follows that 
	$-d\Delta Q>f(x,V_1)-f(x,\theta^*V_2)$ in $\Omega$.
 Since $f$ is locally Lipschitz continuous in the second variable,  there exists a bounded function $b(x,y)$ defined in $\Omega$ such that
\begin{equation}	\label{Q-eq-half-plane}
	-d\Delta Q+b Q>0~~\text{in}~\Omega.
	\end{equation}
Since $Q\ge 0$ in $\overline\Omega$ and $Q(\overline x,\overline y)=0$, it follows from the strong maximum principle that $Q\equiv 0$ in $\Omega$, which contradicts the strict inequality in \eqref{Q-eq-half-plane}. Hence, $Q>0$ in $\Omega$. Suppose now that $\overline y=0$, then $Q(\overline x,0)=0$. The Hopf lemma implies that $\partial_y Q(\overline x,0)>0$. Using the boundary condition, one gets $0>-d\partial_y Q(\overline x,0)=\mu P(\overline x)-\nu Q(\overline x,0)=\mu P(\overline x)\ge 0$. This is a contradiction. Therefore, $Q>0$ in $\overline\Omega$.

In the general case, let  $\overline{x}_n\in [0,L]$ be such that $x_n-\overline{x}_n\in L\mathbb{Z}$, then up to extraction of a subsequence,  $\overline{x}_n\to \overline x_{\infty}\in[0,L]$ as $n\to\infty$.
	Since $y_n$ are bounded, up to extraction of a further subsequence, one gets $y_n\to \overline y_\infty\in[0,+\infty)$ as $n\to+\infty$. Let us set $U_{i,n}(x):=U_i(x+x_n)$ and $V_{i,n}(x,y):=V_i(x+x_n,y+y_n)$ for $i=1,2$. Then, $(U_{i,n}, V_{i,n})$ satisfies
	\begin{align}
	\label{U-V-1}
	\begin{cases}
	-DU''_{i,n}=\nu V_{i,n}(x,0)-\mu U_{i,n}~~&\text{in}~\mathbb{R},\cr
	-d\Delta V_{i,n}=f(x+\overline x_n, V_{i,n})~~&\text{in}~\Omega,\cr
	-d\partial_y V_{i,n}(x,0)=\mu U_{i,n}-\nu V_{i,n}(x,0)~~&\text{in}~\mathbb{R}.
	\end{cases}
	\end{align}
	From standard elliptic estimates, it follows that, up to a subsequence, $(U_{i,n}, V_{i,n})$ converges locally uniformly  as $n\to+\infty$  to a classical solution $(U_{i,\infty}, V_{i,\infty})$ of
		\begin{align}\label{U-V-2}
	\begin{cases}
	-DU''_{i,\infty}=\nu V_{i,\infty}(x,0)-\mu U_{i,\infty}~~&\text{in}~\mathbb{R},\cr
	-d\Delta V_{i,\infty}=f(x+\overline x_\infty, V_{i,\infty})~~&\text{in}~\Omega,\cr
	-d\partial_y V_{i,\infty}(x,0)=\mu U_{i,\infty}-\nu V_{i,\infty}(x,0)~~&\text{in}~\mathbb{R}.
	\end{cases}
	\end{align}
	Moreover, there is $\varep>0$ such that $U_{i,\infty}\ge \varep$ in $\mathbb{R}$ and $V_{i,\infty}\ge\varep$ in $\overline\Omega$.

	Set $P_\infty:=U_{1,\infty}-\theta^* U_{2,\infty}$ in $\mathbb{R}$, and $Q_\infty:=V_{1,\infty}-\theta^* V_{2,\infty}$ in $\overline\Omega$. Then, $P_\infty\ge 0$ in $\mathbb{R}$, $Q_\infty\ge 0$ in $\overline\Omega$ and $Q_\infty(\overline x_\infty,\overline y_\infty)=0$. Suppose that $\overline y_\infty>0$. Notice that $Q_\infty$ satisfies
		\begin{equation*}
		-d\Delta Q_\infty=f(x+\overline x_\infty, V_{1,\infty})-\theta^* f(x+\overline x_\infty, V_{2,\infty})~~~\text{in}~\Omega.
		\end{equation*}
	By analogy with the analysis for problem \eqref{Q-1}, one eventually obtains that $Q_\infty>0$ in $\Omega$. One can exclude the case that $\overline y_\infty=0$, by using again the Hopf lemma and the boundary condition. Therefore, $Q_\infty>0$ in $\overline\Omega$.
	Thus, the case that $Q(x_n,y_n)\to 0$ is ruled out.

	It is left to discuss the first case that $P(x_n)\to 0$ as $n\to+\infty$. Assume first that, up to a subsequence, $x_n\to\overline x$ as $n\to+\infty$. By continuity, one has $P\ge 0$ in $\mathbb{R}$ and $P(\overline x)=0$. Moreover, $P$ satisfies $-DP''+\mu P=\nu Q(\cdot,0)>0$ in $\mathbb{R}$. The strong maximum principle implies that $P\equiv 0$ in $\mathbb{R}$, which is a contradiction. In the general case, let  $\overline{x}_n\in [0,L]$ be such that $x_n-\overline{x}_n\in L\mathbb{Z}$, then up to a subsequence,  $\overline{x}_n\to \overline x_{\infty}\in[0,L]$ as $n\to\infty$. Set $U_{i,n}(x):=U_i(x+x_n)$ in $\mathbb{R}$ and $V_{i,n}(x,y):=V_i(x+x_n,y)$ in $\overline\Omega$, for $i=1,2$. Then  $(U_{i,n},V_{i,n})$ satisfies \eqref{U-V-1} in $\overline\Omega$. 	From standard elliptic estimates, it follows that, up to a subsequence, $U_{i,n}$ and $V_{i,n}$ converge as $n\to+\infty$ in $C^2_{loc}$ to $U_{i,\infty}$ and $V_{i,\infty}$, respectively, which satisfy \eqref{U-V-2}.
	Set $P_\infty:=U_{1,\infty}-\theta^* U_{2,\infty}$ in $\mathbb{R}$ and $Q_\infty:=V_{1,\infty}-\theta^* V_{2,\infty}$ in $\overline\Omega$. Then, $P_\infty\ge 0$ in $\mathbb{R}$ and $P_\infty(0)=0$, $Q_\infty> 0$ in $\overline\Omega$. Moreover, it holds 
	\begin{equation*}
	-DP''_\infty+\mu P_\infty=\nu Q_\infty(\cdot,0)>0~~\text{in}~\mathbb{R}.
	\end{equation*}
	The strong maximum principle then implies that $P_\infty\equiv 0$ in $\mathbb{R}$, which is a contradiction. Hence, the case that $P(x_n)\to 0$ is also impossible.

Consequently,  $\theta^*\ge 1$, whence $(U_1,V_1)\ge (U_2,V_2)$ in $\overline\Omega$. By interchanging the roles of $(U_1,V_1)$ and $(U_2,V_2)$, one can show that $(U_2,V_2)\ge (U_1,V_1)$ in $\overline\Omega$. The uniqueness is achieved.

 Furthermore, if $(U,V)$  is a positive bounded stationary solution of \eqref{pb in half-plane}, it is easy to check that any $L$-lattice translation in $x$ of $(U,V)$ is still a positive bounded stationary solution of \eqref{pb in half-plane}.  Thus, $(U,V)$ is $L$-periodic in $x$.
	It is straightforward to check that $(\nu/\mu,1)$ satisfies the stationary  problem of \eqref{pb in half-plane}. 
   Therefore, 
   $(\nu/\mu,1)$ is the unique positive and bounded stationary solution of \eqref{pb in half-plane}. The large time behavior \eqref{large time} of the solution to the Cauchy problem \eqref{pb in half-plane} then follows immediately from \eqref{lower-0} and \eqref{upper-0}. 
The proof of Theorem \ref{liouville} is thereby complete.
\end{proof}

%%%%%%%%%%%%%%%%%%%%%%%%%%%%%%%%%
%%%%%%%%%%%%%%%%%%%%%%%%%%%%%%%%%   
 %%%%%%%%%%%%%%%%%%%%%%%%%%%%
 %%%%%%%%%%%%%%%%%%%%%%%%%%%%%%%%%%
 %%%%%%%%%%%%%%%%%%%%%%%%%%%%%%%%55
\subsection{Preliminary results in the strip}
\label{section3-2}

\begin{proposition}%[\cite{BRR2013-1}]
	\label{cp-strip}
	Let $(\underline{u},\underline{v})$ and $(\overline{u}, \overline{v})$ be respectively a subsolution bounded from above and a supersolution bounded from below of \eqref{pb in strip} satisfying $\underline{u}\le\overline{u}$ and $\underline{v}\le\overline{v}$ in $\overline\Omega_R$ at $t=0$. Then, either $\underline{u}<\overline{u}$ and $\underline{v}<\overline{v}$ in $\mathbb{R}\times[0,R)$ and $\partial_y\bar v(t,x,R)<\partial_y\underline v(t,x,R)$ on $\mathbb{R}$  for all $t>0$, or there exists $T>0$ such that $(\underline{u},\underline{v})=(\overline{u}, \overline{v})$ in $\overline\Omega_R$ for $t\le T$.
\end{proposition}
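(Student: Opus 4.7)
The argument mirrors that of Proposition~\ref{cp}, with the Dirichlet face $\mathbb{R}\times\{R\}$ treated separately via Hopf's lemma. Set $U:=\overline u-\underline u$ and $V:=\overline v-\underline v$; these are nonnegative at $t=0$. Because $f$ is $C^1$ in its second argument and $\underline v,\overline v$ are bounded, the mean value theorem produces a bounded $b(t,x,y)$ such that $V$ satisfies the linear parabolic differential inequality $\partial_t V-d\Delta V+bV\ge 0$ on $(0,\infty)\times\Omega_R$; likewise $U$ satisfies $\partial_t U-D\partial_{xx}U+\mu U\ge \nu V(t,x,0)$ on $(0,\infty)\times\mathbb{R}$. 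They are coupled through $-d\partial_y V(t,x,0)+\nu V(t,x,0)\ge\mu U$ on $y=0$ and bounded below by $V(t,x,R)\ge 0$ on $y=R$. First, after the standard change of unknown $(\tilde U,\tilde V)=e^{-Kt}(U,V)$ with $K$ large enough to sign all zeroth-order coefficients, a weak maximum principle (exactly as in Proposition~\ref{cp}) yields $U\ge 0$ on $(0,\infty)\times\mathbb{R}$ and $V\ge 0$ on $(0,\infty)\times\overline\Omega_R$; the Dirichlet condition at $y=R$ and the Robin-type coupling at $y=0$ together rule out a would-be negative minimum on either boundary.

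\textbf{Dichotomy via strong maximum principle.} Assume the first alternative fails, i.e., there exists $(t_0,x_0,y_0)$ with $t_0>0$ at which either $U(t_0,x_0)=0$, or $V(t_0,x_0,y_0)=0$ with $y_0\in[0,R)$. I will show that then $(U,V)\equiv(0,0)$ on $[0,t_0]\times\overline\Omega_R$, which is the second alternative with $T=t_0$. If $V$ vanishes at an interior point $y_0\in(0,R)$, the parabolic strong maximum principle applied to the inequality for $V$ propagates $V\equiv 0$ on the backward parabolic cylinder, and then by connectedness of $\Omega_R$ along each time slice, on $(0,t_0]\times\Omega_R$; continuity extends this to $\overline\Omega_R$. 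The boundary coupling then forces $0\ge \mu U$ on $(0,t_0]\times\mathbb{R}$, whence $U\equiv 0$ there. If instead the vanishing first occurs at $y_0=0$, suppose for contradiction that $V$ were strictly positive in some interior neighborhood; Hopf's lemma would give $\partial_y V(t_0,x_0,0)>0$, while the boundary inequality together with $V(t_0,x_0,0)=0$ and $U(t_0,x_0)\ge 0$ would force $\partial_y V(t_0,x_0,0)\le 0$, a contradiction. Hence $V$ must also vanish at some interior point, reducing to the previous case. The case $U(t_0,x_0)=0$ is analogous: the scalar strong maximum principle for $U$ gives $U\equiv 0$ on $(0,t_0]\times\mathbb{R}$, then the $U$-equation forces $V(\cdot,\cdot,0)\equiv 0$, and the Hopf-type argument just given propagates this into the interior. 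Finally, continuity at $t=0$ yields $(U,V)\equiv(0,0)$ on $[0,t_0]\times\overline\Omega_R$.

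\textbf{Strict derivative at $y=R$.} If, conversely, the second alternative fails, Step 2 says $U>0$ on $(0,\infty)\times\mathbb{R}$ and $V>0$ on $(0,\infty)\times\mathbb{R}\times[0,R)$. Since $V\ge 0$ on $\mathbb{R}\times\{R\}$ and $V$ satisfies $\partial_t V-d\Delta V+bV\ge 0$ with $V>0$ in the interior, Hopf's lemma at the Dirichlet face produces $\partial_y V(t,x,R)<0$ for every $t>0$ and $x\in\mathbb{R}$, which is exactly $\partial_y\overline v(t,x,R)<\partial_y\underline v(t,x,R)$.

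The main obstacle is the boundary $y=0$: neither the strong maximum principle (which does not reach the boundary) nor Hopf's lemma alone suffices when $V$ touches zero there, and one must combine Hopf with the Robin-type coupling $-d\partial_y V+\nu V\ge\mu U$ and the sign of $U$ to force the vanishing into the interior, where the strong maximum principle then closes the argument.
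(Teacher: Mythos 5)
The paper omits this proof, citing that it follows by slight modification of the arguments in \cite{BRR2013-1}, and your proposal correctly reconstructs exactly that program: subtract to get $U=\overline u-\underline u$, $V=\overline v-\underline v$; linearize $f$ via the mean value theorem to obtain coupled linear parabolic differential inequalities; establish $U,V\ge 0$ by a weak maximum principle after the standard exponential rescaling; and then use the parabolic strong maximum principle in the interior, Hopf's lemma combined with the Robin-type exchange condition at $y=0$ to push any boundary touching of $V$ into the interior, the $u$-equation to transfer vanishing between the two components, and finally Hopf at the Dirichlet face $y=R$ for the derivative inequality. The logical structure (show that if either component vanishes somewhere in $\mathbb{R}\times[0,R)$ at some $t_0>0$ then $(U,V)\equiv(0,0)$ on $[0,t_0]$, and otherwise deduce the derivative inequality from strict interior positivity) is sound and matches what one expects from \cite{BRR2013-1} and \cite{Tellini2016}.

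One step should be tightened. Your final invocation of Hopf's lemma at $y=R$ requires $V$ to attain its zero minimum on that face, i.e., $V(t,x,R)=0$. With the paper's convention that sub-/supersolutions satisfy the system ``with $=$ replaced by $\le$ (resp.\ $\ge$)'', one only has $\underline v(t,x,R)\le 0$ and $\overline v(t,x,R)\ge 0$, hence $V(t,x,R)\ge 0$ but not necessarily $V(t,x,R)=0$; at a point where $V(t,x,R)>0$, Hopf gives nothing and the claimed inequality $\partial_y\overline v(t,x,R)<\partial_y\underline v(t,x,R)$ need not hold. In practice all of the paper's applications use sub-/supersolutions that satisfy the Dirichlet condition with equality, so the conclusion is correct in the intended regime, but your write-up should state this explicitly before applying Hopf on the upper face (or restrict the derivative conclusion to points where $\underline v(t,x,R)=\overline v(t,x,R)=0$).
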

\begin{proposition}%[\cite{BRR2013-1}]
	\label{cp--generalized sub-strip}
	Let $E\subset(0,+\infty)\times \mathbb{R}$ and $F\subset (0,+\infty)\times\Omega_R$ be two open sets and let $(u_1, v_1)$ and $(u_2,v_2)$ be two subsolutions of \eqref{pb in strip} bounded from above, satisfying 
	\begin{equation*}
	u_1\le u_2 \ \text{on} \ (\partial E)\cap((0,+\infty)\times\mathbb{R}),\qquad v_1\le v_2\ \text{on} \ (\partial F)\cap((0,+\infty)\times\Omega_R).
	\end{equation*} 
	If the functions $\underline{u}$, $\underline{v}$ defined by
	\begin{align*}
	\underline{u}(t,x)&:=
	\begin{cases}
	\max\{u_1(t,x), u_2(t,x)\}\qquad &\text{if}\ (t,x)\in\overline{E}\\
	u_2(t,x)\qquad\qquad\qquad\qquad&\text{otherwise}
	\end{cases}\\
	\underline{v}(t,x,y)&:=
	\begin{cases}
	\max\{v_1(t,x,y), v_2(t,x,y)\}~ &\text{if}\ (t,x,y)\in\overline{F}\\
	v_2(t,x,y)\qquad\qquad&\text{otherwise}
	\end{cases}
	\end{align*}
	satisfy
	\vspace{-3mm}
	\begin{align*}
	\underline{u}(t,x)>u_2(t,x)\Rightarrow \underline{v}(t,x,0)\ge v_1(t,x,0),\\
	\underline{v}(t,x,0)>v_2(t,x,0)\Rightarrow \underline{u}(t,x)\ge u_1(t,x),
	\end{align*}
	then, any supersolution $(\overline{u},\overline{v})$ of \eqref{pb in strip} bounded from below and such that $\underline{u}\le\overline{u}$ and $\underline{v}\le\overline{v}$ in $\overline\Omega_R$ at $t=0$, satisfies $\underline{u}\le\overline{u}$ and $\underline{v}\le\overline{v}$ in $\overline\Omega_R$ for all $t>0$.
\end{proposition}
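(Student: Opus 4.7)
The approach adapts the generalized comparison argument of \cite{BRR2013-1, GMZ2015} to the strip geometry; the only structural novelty compared with the half-plane proposition (Proposition \ref{cp--generalized sub}) is the Dirichlet condition at $y=R$, which if anything helps, since it provides a strict sign at the top boundary.

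First, I would reduce to a strict comparison by a standard exponential shift. For $\varepsilon>0$ and $K>0$ to be chosen, set $\overline u_\varepsilon := \overline u + \varepsilon e^{Kt}$ and $\overline v_\varepsilon := \overline v + \varepsilon e^{Kt}$. Choosing $K$ larger than $\mu+\nu$ and the Lipschitz constant of $v\mapsto f(x,v)$ on the relevant range, one checks that $(\overline u_\varepsilon,\overline v_\varepsilon)$ satisfies the parabolic inequalities of \eqref{pb in strip} strictly in the interior, strictly satisfies the Robin coupling at $y=0$, and strictly exceeds $0$ at $y=R$. Letting $\varepsilon\to 0$ at the end will give the non-strict conclusion.

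Second, I would consider
\begin{equation*}
t^\star := \sup\bigl\{t\ge 0 : \underline u\le \overline u_\varepsilon \text{ on } [0,t]\times\mathbb{R} \text{ and } \underline v\le \overline v_\varepsilon \text{ on } [0,t]\times\overline\Omega_R\bigr\},
\end{equation*}
which is positive by the initial gap of size $\varepsilon$ and continuity. Assuming for contradiction that $t^\star<\infty$, I would extract a contact point from a sequence realizing the equality at $t=t^\star$, translating in $x$ and invoking interior parabolic estimates on the translated sub- and supersolutions to pass to a classical limit in $\overline\Omega_R$. The strict inequality $\overline v_\varepsilon(t^\star,\cdot,R)\ge \varepsilon e^{Kt^\star}>0=\underline v(t^\star,\cdot,R)$ excludes any contact on the top boundary. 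I would then analyze the remaining possibilities: (a) an interior contact $(t^\star,x^\star,y^\star)$ with $0<y^\star<R$, in which case $\underline v$ coincides locally with either $v_1$ or $v_2$, both subsolutions, so that the parabolic strong maximum principle applied to $\overline v_\varepsilon-\underline v$ against the strict supersolution gives a contradiction; (b) a contact on the road $\{y=0\}$, to be handled below; and (c) a contact for $\underline u$ on $\mathbb{R}$, handled symmetrically via the elliptic-parabolic maximum principle for $\overline u_\varepsilon-\underline u$ together with the coupling $-D\partial_{xx}\cdot+\partial_t\cdot+\mu\cdot=\nu v(\cdot,0)$.

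The main obstacle will be case (b). Since $\underline v$ is only continuous and its normal derivative may jump across $\partial F$, the Hopf lemma cannot be invoked directly on $\overline v_\varepsilon-\underline v$. I would bypass this by a one-sided argument: in a small neighborhood of the contact point on which $\underline v$ coincides with a single $v_i$, apply the Hopf lemma to $\overline v_\varepsilon - v_i$ to get $\partial_y(\overline v_\varepsilon-v_i)(t^\star,x^\star,0)<0$, and then use the two structural implications
\begin{equation*}
\underline u(t,x)>u_2(t,x)\Rightarrow \underline v(t,x,0)\ge v_1(t,x,0),\qquad \underline v(t,x,0)>v_2(t,x,0)\Rightarrow \underline u(t,x)\ge u_1(t,x),
\end{equation*}
to ensure that the Robin jump $-d\partial_y v=\mu u-\nu v$ at the contact point is inherited consistently from the appropriate $(u_i,v_i)$ pair. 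Precisely, these implications rule out the pathological configuration where $\underline u$ and $\underline v(\cdot,0)$ simultaneously take their values from different subsolutions; hence the strict sign provided by Hopf transfers to a strict inequality in the equation for $\underline u$, against which the strong maximum principle on $\mathbb{R}$ yields the final contradiction. Passing $\varepsilon\to 0$ then concludes the proof.
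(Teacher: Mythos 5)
The paper omits a written proof of this proposition (pointing to \cite{BRR2013-1} for the technique), so the comparison is with what a correct adaptation should look like. Your overall strategy --- perturb the supersolution, argue by contradiction from a first contact time extracted by translation and compactness, then run a case analysis over interior, road, and top-boundary contacts --- is the right one, and the observation that the Dirichlet condition at $y=R$ trivially excludes top-boundary contacts is correct. However, there is a concrete flaw in the reduction to a strict supersolution, and a secondary misattribution of where the structural implications are actually used.

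\textbf{The perturbation $(\overline u+\varepsilon e^{Kt},\ \overline v+\varepsilon e^{Kt})$ breaks the Robin supersolution inequality when $\mu>\nu$.} At $y=0$ one computes
\begin{align*}
-d\partial_y\overline v_\varepsilon(t,x,0)=-d\partial_y\overline v(t,x,0)\ \ge\ \mu\overline u-\nu\overline v(t,x,0)
=\mu\overline u_\varepsilon-\nu\overline v_\varepsilon(t,x,0)+(\nu-\mu)\varepsilon e^{Kt},
\end{align*}
and when $\mu>\nu$ the final term is negative: the shifted pair may no longer be a supersolution at the road, let alone a strict one, so your claim that it ``strictly satisfies the Robin coupling at $y=0$'' is false in general. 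No choice of $K$ repairs this, since $K$ only affects the interior defect. The standard fix in the road-field setting is to perturb in the direction of the stationary ray $(\nu,\mu)$, i.e.\ set $\overline u_\varepsilon:=\overline u+\varepsilon\nu e^{Kt}$, $\overline v_\varepsilon:=\overline v+\varepsilon\mu e^{Kt}$; then $\mu\overline u_\varepsilon-\nu\overline v_\varepsilon(\cdot,0)=\mu\overline u-\nu\overline v(\cdot,0)$, so the Robin inequality is preserved exactly, the two interior inequalities become strict for $K$ larger than the Lipschitz constant of $f$ in $v$, and $\overline v_\varepsilon(\cdot,R)\ge\varepsilon\mu e^{Kt}>0$. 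A non-strict Robin inequality suffices: at a road-contact point the Hopf lemma applied to $\overline v_\varepsilon-v_2$ gives $-d\partial_y(\overline v_\varepsilon-v_2)(t^\star,x^\star,0)<0$, while the boundary inequalities combine to $-d\partial_y(\overline v_\varepsilon-v_2)\ge\mu(\overline u_\varepsilon-u_2)-\nu(\overline v_\varepsilon-v_2)=\mu(\overline u_\varepsilon-u_2)$ at the contact point, forcing $\overline u_\varepsilon(t^\star,x^\star)<u_2(t^\star,x^\star)\le\underline u(t^\star,x^\star)\le\overline u_\varepsilon(t^\star,x^\star)$, a contradiction.

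\textbf{On the role of the structural implications.} At a road contact you do not actually need them: since $F\subset(0,+\infty)\times\Omega_R$, any point with $y=0$ lies on $\partial F$ or outside $\overline F$, and the hypothesis $v_1\le v_2$ on $\partial F$ then forces $\underline v(t^\star,x^\star,0)=v_2(t^\star,x^\star,0)$, so one may always compare against $v_2$ (for which $u_2\le\underline u$ trivially holds). The implications are instead essential for case (c), a first contact $\underline u(t^\star,x^\star)=\overline u_\varepsilon(t^\star,x^\star)$ where the max is realized by $u_1>u_2$: there the implication $\underline u>u_2\Rightarrow\underline v(\cdot,0)\ge v_1(\cdot,0)$ guarantees $\overline v_\varepsilon(\cdot,0)\ge\underline v(\cdot,0)\ge v_1(\cdot,0)$ in a neighborhood, so that $w:=\overline u_\varepsilon-u_1$ satisfies $\partial_t w-D\partial_{xx}w+\mu w\ge\nu(\overline v_\varepsilon-v_1)(\cdot,0)+c(\varepsilon)>0$ with $w\ge0$, and the parabolic strong maximum principle yields the contradiction. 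Adjusting the perturbation and reassigning the implications to case (c) as above closes the argument.
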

\begin{proposition}
	\label{wellposedness-strip}
	The Cauchy problem \eqref{pb in strip} with nonnegative,  bounded and  continuous initial datum $(u_0,v_0)\not\equiv (0,0)$ admits a unique bounded classical solution $(u,v)$ for all $t\ge 0$ and $(x,y)\in\overline\Omega_R$. Moreover, for any $0<\tau<T$ and	for any compact subsets $I\subset\mathbb{R}$ and $H\subset\Omega_R$ with $\overline H\cap\{y=0\}=\overline I$, 
	\begin{align*}
	%\label{compactness}
	\Vert u(t,x)\Vert_{C^{1+\frac{\alpha}{2},2+\alpha}([\tau,T]\times \overline I)}+\Vert v(t,x,y)\Vert_{C^{1+\frac{\alpha}{2},2+\alpha}([\tau,T]\times \overline H)}\le C,
	\end{align*}
	where $C$ is a positive constant depending on $\tau$,$ T$, $f$, $\Vert u_0\Vert_{L^\infty(\mathbb{R})}$ and $\Vert v_0\Vert_{L^\infty(\overline\Omega_R)}$.
\end{proposition}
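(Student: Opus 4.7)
The plan is to follow the strategy developed in \cite{BRR2013-1} for the Cauchy problem of the homogeneous field-road model, adapting it to the truncated strip with the additional homogeneous Dirichlet condition on $\{y=R\}$. The argument splits into three parts: existence via a monotone approximation based on a constant supersolution, uniqueness from the comparison principle already stated in Proposition \ref{cp-strip}, and Schauder-type regularity via parabolic estimates near each of the two horizontal boundaries and in the interior.

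First, I would exhibit a global-in-time constant supersolution of \eqref{pb in strip}, taking the same couple as in \eqref{super-constant sol}, namely
\begin{equation*}
(\overline{U},\overline{V}):=\max\Bigl\{\frac{\Vert u_0\Vert_{L^{\infty}(\mathbb{R})}}{\nu},\frac{\Vert v_0\Vert_{L^{\infty}(\overline\Omega_R)}+1}{\mu}\Bigr\}(\nu,\mu).
\end{equation*}
The KPP hypothesis $f(x,v)<0$ for $v>1$ gives $-d\Delta\overline V\ge f(x,\overline V)$, the exchange inequalities are satisfied by construction, and $\overline V>0$ is compatible with a Dirichlet upper boundary in the supersolution sense. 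With this uniform bound available, I would solve the problem on an exhausting family of rectangles $\Omega_R^n:=(-n,n)\times(0,R)$, imposing zero Dirichlet data at $x=\pm n$ (suitably truncated from $(u_0,v_0)$). On each bounded rectangle, the road-field system is a mixed parabolic problem whose existence and uniqueness of a classical solution follow from a Banach fixed-point iteration on the mild formulation: the 1D heat semigroup on the road and the 2D heat semigroup in the strip are coupled through the Robin-type exchange condition at $y=0$, and the nonlinearity $f$ is locally Lipschitz in $v$.

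Having a sequence $(u_n,v_n)$ of local solutions uniformly dominated by $(\overline U,\overline V)$, I would pass to the limit $n\to\infty$ via parabolic Schauder estimates (interior and up to each horizontal boundary) to extract a global classical solution $(u,v)$ on $[0,+\infty)\times\overline\Omega_R$ bounded by $(\overline U,\overline V)$. Uniqueness then follows directly from Proposition \ref{cp-strip}: any two solutions with the same initial datum are simultaneously sub- and supersolutions of one another.

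The main obstacle is the interior/boundary Schauder estimate near the road $\{y=0\}$, where the boundary condition couples the 2D field equation to the 1D road equation through the exchange terms $\mu u-\nu v(\cdot,\cdot,0)$. The plan is a standard bootstrap: starting from the $L^\infty$ bound on $(u,v)$, apply $L^p$ parabolic theory to the $v$-equation with the Robin-type condition $-d\partial_y v(t,x,0)+\nu v(t,x,0)=\mu u(t,x)$ viewed as a source, obtaining H\"older regularity of the trace $v(\cdot,\cdot,0)$; plug this into the 1D road equation to upgrade $u$ via parabolic Schauder theory on $\mathbb{R}$; re-inject into the $v$-equation and iterate until one reaches the required $C^{1+\alpha/2,\,2+\alpha}$ bound on $\overline I$ and $\overline H$, with constant depending on $\tau,T,f,\Vert u_0\Vert_{L^\infty}$ and $\Vert v_0\Vert_{L^\infty}$. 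Near $y=R$ the boundary is a homogeneous Dirichlet one and standard parabolic Schauder estimates apply with no interaction with the road, which is separated from it by the compact strip; this is where the proof diverges from \cite{BRR2013-1}, but only in a completely standard way.
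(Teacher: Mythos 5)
Your proposal follows essentially the same route as the paper: existence via approximation on the exhausted rectangles $(-n,n)\times(0,R)$ with zero Dirichlet data at $x=\pm n$ (compare \eqref{pb in strip-cut off}) followed by a compactness passage to the limit, uniqueness via Proposition \ref{cp-strip}, and the $C^{1+\alpha/2,2+\alpha}$ estimate by first applying parabolic $L^p$ theory and then bootstrapping through Schauder theory. The paper is terser (it defers the local solvability on the truncated rectangles to the fixed-point construction in Appendix A of \cite{BRR2013-1}), while you spell out the constant supersolution $(\overline U,\overline V)$ for the a priori bound and the Robin-coupled regularity bootstrap near $\{y=0\}$, but these are just fleshed-out versions of the same steps.
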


  	The existence of the solution to the Cauchy problem \eqref{pb in strip} follows from an approximation argument by constructing a sequence of approximate solutions in $[-n,n]\times[0,R]$ for $n$ large enough, which satisfy\footnote{Problem \eqref{pb in strip-cut off} with a nonnegative, continuous and bounded initial function  has a unique bounded classical   solution defined for all $t>0$, which can be obtained in the spirit of  Appendix A in \cite{BRR2013-1} and the strong maximum principle.}
  \begin{equation}
  \label{pb in strip-cut off}
  \begin{cases}
  \partial_t u -D\partial_{xx}u=\nu v(t,x,0)-\mu u, &t>0,\  x\in [-n,n], \\
  \partial_t v -d\Delta v =f(x,v), &t>0,\ (x,y)\in(-n,n)\times(0,R),\\
  -d\partial_y v(t,x,0)=\mu u-\nu v(t,x,0), &t>0,\  x\in [-n,n],\\
  v(t,x,R)=0, &t>0,\  x\in [-n,n],\\
  v(t,\pm n, y)=0, &t>0,\ y\in[0,R],
  \end{cases}
  \end{equation}
   and then passing to the limit $n\to+\infty$ via the Arzel\`{a}-Ascoli theorem. Uniqueness comes from the comparison principle Proposition \ref{cp-strip}.  The estimate can be derived by standard parabolic $L^p$ theory (see, e.g., \cite[page 168, Proposition 7.14]{Liberman}) and then the Schauder theory. 
  	
  	In the following, we show the continuous dependence of the solutions to the Cauchy problem \eqref{pb in strip} on initial data.

\begin{proposition}
	\label{prop3.8}
	The solutions of the Cauchy problem \eqref{pb in strip} depend continuously on the initial data. 	
\end{proposition}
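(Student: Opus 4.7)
The plan is to establish continuous dependence via a standard exponential-supersolution argument adapted to the field-road coupling. Fix $T > 0$ and let $(u_i, v_i)$, $i=1,2$, denote two solutions of \eqref{pb in strip} with initial data $(u_0^{(i)}, v_0^{(i)})$. By Proposition~\ref{wellposedness-strip} both solutions are bounded on $[0,T]\times\overline\Omega_R$, so $f(x,\cdot)$ is Lipschitz with some constant $L_f$ on the common range of the two solutions. Setting
\[
\varepsilon := \max\bigl\{\|u_0^{(1)} - u_0^{(2)}\|_\infty,\ \|v_0^{(1)} - v_0^{(2)}\|_\infty\bigr\},
\]
the idea is to compare $(u_1, v_1)$ with the candidate pair
\[
\bigl(\overline u(t, x),\ \overline v(t, x, y)\bigr) := \bigl(u_2(t, x) + A\, e^{K t},\ v_2(t, x, y) + B\, e^{K t}\bigr)
\]
for positive constants $A, B, K$ chosen so that $(\overline u, \overline v)$ is a supersolution of \eqref{pb in strip} dominating $(u_0^{(1)}, v_0^{(1)})$ at $t = 0$.

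Plugging this ansatz into \eqref{pb in strip} and using that $(u_2, v_2)$ is an exact solution, together with the bound $f(x, v_2 + B e^{Kt}) - f(x, v_2) \le L_f\, B\, e^{Kt}$, the four supersolution inequalities will reduce to the algebraic constraints
\[
(K+\mu)A \ge \nu B,\qquad K \ge L_f,\qquad \nu B \ge \mu A,\qquad B e^{Kt} \ge 0.
\]
The choice $A := \varepsilon\max(1,\nu/\mu)$, $B := \varepsilon\max(1,\mu/\nu)$, and $K := L_f$ satisfies all of them simultaneously: the ratio $B/A = \mu/\nu$ turns the exchange condition into an equality, the road condition then holds for any $K \ge 0$, and $A, B \ge \varepsilon$ yields $\overline u(0,\cdot) \ge u_0^{(1)}$ and $\overline v(0,\cdot,\cdot) \ge v_0^{(1)}$. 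Applying the comparison principle (Proposition~\ref{cp-strip}) will then give $u_1 - u_2 \le A e^{Kt}$ and $v_1 - v_2 \le B e^{Kt}$ on $[0,T]\times\overline\Omega_R$. Exchanging the roles of $(u_1,v_1)$ and $(u_2,v_2)$ produces the matching lower bounds, so
\[
\|u_1 - u_2\|_{L^\infty([0,T]\times\mathbb{R})} + \|v_1 - v_2\|_{L^\infty([0,T]\times\overline\Omega_R)} \le C(T)\,\varepsilon,
\]
with $C(T)$ depending only on $T$, $\mu$, $\nu$, and $L_f$. Letting $\varepsilon \to 0$ will finish the proof.

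The main (mild) technical subtlety is the Robin-type exchange at $y = 0$, which couples the two components and precludes a symmetric choice $A = B$ unless $\mu = \nu$. The ratio $B/A = \mu/\nu$ is dictated precisely by this coupling and lets the exchange inequality hold with equality while still respecting the road equation. Everything else is a routine adaptation of the classical continuous-dependence argument to the field-road setting, relying only on the comparison principle in the strip and the smoothness of $f$.
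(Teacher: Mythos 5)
Your approach is correct and genuinely different from the paper's. The paper first transforms $(u,v)\mapsto(u,v)e^{-Mt}$ with $M=\max_{[0,L]}f_v(\cdot,0)$, so that the reaction becomes $g(t,x,z)=f(x,e^{Mt}z)e^{-Mt}-Mz$, which is non-increasing in $z$ (using the KPP bound $f_v(x,v)\le f_v(x,0)\le M$); it then sandwiches \emph{both} solutions between the \emph{same} pair of generalized super- and subsolutions built from a linear-in-time perturbation $\delta(t/T+1)$ capped above and below by constants. You instead perturb one exact solution by $Ae^{Kt}$, $Be^{Kt}$ and absorb the nonlinear mismatch via a local Lipschitz constant $L_f$ by choosing $K\ge L_f$; the Robin exchange at $y=0$ forces $B/A=\mu/\nu$, which your choice satisfies with equality, and the remaining constraint $(K+\mu)A\ge\nu B$ then reduces to $K\ge 0$. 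Both arguments work. Yours is shorter, avoids the transformation and the generalized sub/supersolution verification, and in fact delivers a Lipschitz modulus of continuity $\le C(T)\varepsilon$, while the paper's construction avoids invoking any explicit Lipschitz constant by exploiting the monotone structure of $g$.

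One point to tighten: $L_f$ must be the Lipschitz constant of $f(x,\cdot)$ on a range that contains $v_2+Be^{Kt}$ for all $t\in[0,T]$, not merely on the common range of the two solutions as written. Since $B=\varepsilon\max(1,\mu/\nu)$ does not itself depend on $L_f$, one fixes $L_f$ as the Lipschitz constant on $[0,M_1]$ with $M_1$ exceeding both solution bounds by $1$ (uniform in $x$ by periodicity and $C^{1,\delta}$ regularity of $f$), and then restricts to $\varepsilon$ small enough that $Be^{L_fT}\le 1$. There is no genuine circularity, and the restriction to small $\varepsilon$ is harmless for the continuity statement, but it should be made explicit rather than attributed solely to the boundedness of the two solutions.
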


\begin{proof}
  Let $(u,v)$ be the solution, given in Proposition \ref{wellposedness-strip}, of  \eqref{pb in strip} with nonnegative, bounded and continuous initial datum $(u_0,v_0)\not\equiv (0,0)$. We shall prove that for any $\varep>0$, $T>0$, there is $\delta>0$, depending on $\varep$, $T$ and $(u_0,v_0)$, such that for any nonnegative, bounded and continuous function pair $(\tilde u_0,\tilde v_0)$ satisfying 
	\begin{equation}
	\label{initial-1}
	\sup_{x\in\mathbb{R}}|u_0(x)-\tilde u_0(x)|<\frac{\nu}{\mu}\delta,~~\sup_{(x,y)\in\overline\Omega_R}|v_0(x,y)-\tilde v_0(x,y)|<\delta,
	\end{equation}
	the solution to \eqref{pb in strip} with initial value $(\tilde u_0,\tilde v_0)$ satisfies
	\begin{equation}\label{sol-1}
	\sup_{(t,x)\in[0,T]\times\mathbb{R}}|u(t,x)-\tilde u(t,x)|<\frac{\nu}{\mu}\varep,~~\sup_{(t,x,y)\in[0,T]\times\overline\Omega_R}|v(t,x,y)-\tilde v(t,x,y)|<\varep.
	\end{equation}
	
	Recall that $M=\max_{[0,L]}f_v(x,0)$. Define $(w,z):=(u,v)e^{-Mt}$, then $(w,z)$ satisfies
	\begin{equation}
	\label{wz}
	\begin{cases}
	\partial_t w -D\partial_{xx}w=\nu z(t,x,0)-(\mu+M) w, &t>0,\  x\in \mathbb{R}, \\
	\partial_t z -d\Delta z =g(t,x,z), &t>0,\ (x,y)\in\Omega_R,\\
	-d\partial_y z(t,x,0)=\mu w-\nu z(t,x,0), &t>0,\  x\in \mathbb{R},\\
	z(t,x,R)=0, &t>0,\  x\in \mathbb{R},
	\end{cases}
	\end{equation}
	where the function $g(t,x,z):=f(x,e^{Mt}z)e^{-Mt}-Mz$ is non-increasing in $z$.
	We observe that $(u,v)e^{-Mt}$ and $(\tilde u,\tilde v)e^{-Mt}$ are the solutions of \eqref{wz} with initial functions $(u_0,v_0)$ and $(\tilde u_0,\tilde v_0)$, respectively.
	
	Define 
	\begin{equation*}
	\begin{cases}
	\underline u(t,x):= \max\big(0,w(t,x)-\frac{\nu}{\mu}\delta(\frac{t}{T}+1)\big),~~~~~~\cr
	\underline v(t,x,y):=\max\big(0,z(t,x,y)-\delta(\frac{t}{T}+1)\big),
	\end{cases}
	\end{equation*}
	and 
	\begin{equation*}
	\begin{cases}
	\overline u(t,x):=\min\big(\frac{\nu}{\mu+M}A, w(t,x)+\frac{\nu}{\mu}\delta(\frac{t}{T}+1)\big),\cr
	\overline v(t,x,y):=\min\big(A, z(t,x,y)+\delta(\frac{t}{T}+1)\big),
	\end{cases}
	\end{equation*}
	where $A:=\max\Big(1,\Vert u_0 \Vert_{L^{\infty}(\mathbb{R})}+\Vert v_0 \Vert_{L^\infty(\overline\Omega_R)}+\delta, \frac{\mu+M}{\nu}(\Vert u_0 \Vert_{L^{\infty}(\mathbb{R})}+\Vert v_0 \Vert_{L^\infty(\overline\Omega_R)}+\frac{\nu}{\mu}\delta)\Big)$. It can be checked that $(\underline u,\underline v)$ and $(\overline u,\overline v)$ are, respectively, a generalized sub- and a generalized supersolution of \eqref{wz}. Notice that
	\begin{align*}
	\underline u(0,x)=\max\Big(0, u_0(x)-\frac{\nu}{\mu}\delta \Big)<&u_0(x)<u_0(x)+\frac{\nu}{\mu}\delta= \overline u(0,x),~~\forall x\in\mathbb{R},\cr
	\underline v(0,x,y)=\max\Big( 0, v_0(x,y)-\delta\Big)<&v_0(x,y)<v_0(x,y)+\delta= \overline v(0,x,y),~~\forall (x,y)\in\overline\Omega_R.
	\end{align*}
    From \eqref{initial-1}, one infers that
    \begin{align*}
    \underline u(0,x)=\max\Big(0, u_0(x)-\frac{\nu}{\mu}\delta \Big)<&\tilde u_0(x)<u_0(x)+\frac{\nu}{\mu}\delta= \overline u(0,x),~~\forall x\in\mathbb{R},\cr
    \underline v(0,x,y)=\max\Big( 0, v_0(x,y)-\delta\Big)<&\tilde v_0(x,y)<v_0(x,y)+\delta= \overline v(0,x,y),~~\forall (x,y)\in\overline\Omega_R.
    \end{align*}	
	By a comparison argument, it follows that
	\begin{equation*}
	(\underline u,\underline v)\le (u,v)e^{-Mt}\le (\overline u,\overline v),~~~~	(\underline u,\underline v)\le (\tilde u,\tilde v)e^{-Mt}\le (\overline u,\overline v),
	\end{equation*}
	for all $t\in[0,T]$ and $(x,y)\in\overline\Omega_R$. Thus, 
	\begin{equation*}
	\sup_{[0,T]\times\mathbb{R}}|u(t,x)-\tilde u(t,x)|\le e^{MT}\sup_{[0,T]\times\mathbb{R}}|\overline u(t,x)-\underline u(t,x)|\le 2e^{MT}\frac{\nu}{\mu}\delta\sup_{[0,T]}(\frac{t}{T}+1)\le 4e^{MT}\frac{\nu}{\mu}\delta,
	\end{equation*}
	\begin{equation*}
	\sup_{[0,T]\times\overline\Omega_R}|v(t,x,y)-\tilde v(t,x,y)|\le e^{MT}\sup_{[0,T]\times\overline\Omega_R}|\overline v(t,x,y)-\underline v(t,x,y)|\le 2e^{MT}\delta\sup_{[0,T]}(\frac{t}{T}+1)\le 4e^{MT}\delta.
	\end{equation*}
	By choosing $\delta>0$ so small that $4e^{MT}\delta<\varep$, \eqref{sol-1} is therefore achieved.	
\end{proof}

Next, we  prove a Liouville-type result in the strip, provided that the width $R$ is sufficiently large. Namely, for all $R$ large,  problem  \eqref{pb in strip} admits a unique nonnegative nontrivial stationary solution $(U_R,V_R)$, which is $L$-periodic in $x$. Moreover, we show that $(U_R,V_R)$ is the global attractor  for solutions of the Cauchy problem in the strip. 

\begin{proof}[Proof of Theorem \ref{thm2.1}]
 The strategy of this proof is similar in spirit to that of Theorem \ref{liouville}.   We only sketch the proof of the existence and  positivity property of stationary solutions, for which the construction of subsolutions is inspired by \cite[Proposition 3.4]{Tellini2016}.

 	Let $(u,v)$ be the solution to the Cauchy problem \eqref{pb in strip} with nonnegative, bounded and continuous initial value $(u_0,v_0)\not\equiv(0,0)$. Set
\begin{align*}
(\underline{u},\underline{v}):=\begin{cases}
\cos(\omega x)\Big(1, \frac{\mu\sin(\beta(R-y))}{d\beta\cos(\beta R)+\nu\sin(\beta R)}\Big)  \ &\text{for}\ (x,y)\in (-\frac{\pi}{2\omega},\frac{\pi}{2\omega})\times[0,R],\\
(0,0) &\text{otherwise,}
\end{cases}
\end{align*}
where $\omega$ and $\beta$ are parameters to be chosen later so that $(\underline{u},\underline{v})$ satisfies
\begin{equation}
\label{beta,w}
\begin{cases}
-D\underline{u}''\le \nu\underline{v}(x,0)-\mu \underline{u}, & x\in \mathbb{R}, \\
-d\Delta\underline{v} \le \big(m-\delta\big)\underline{v},  &(x,y)\in\Omega_R,\\
-d\partial_y \underline{v}(x,0)=\mu \underline{u}-\nu \underline{v}(x,0), & x\in \mathbb{R},\\
\underline{v}(x,R)=0, & x\in\mathbb{R},
\end{cases}
\end{equation}
where $\delta>0$ is small enough such that $0< \delta< m=\min_{[0,L]}f_v(x,0)$. A lengthy but straightforward calculation reveals, from the first two relations of \eqref{beta,w},  that $\omega$ and $\beta$ should verify
\begin{equation*}
\begin{cases}
D\omega^2\le -\frac{\mu d\beta \cos(\beta R)}{d\beta\cos(\beta R)+\nu \sin(\beta R)},\\
d\omega^2+d\beta^2\le m-\delta.
\end{cases}
\end{equation*}

Because of \eqref{R-condition}, $\delta>0$ can be chosen sufficiently small such that 
$d\big(\frac{\pi}{2R}\big)^2<m-\delta$. Then, $\beta$ can be  chosen  very closely to $\frac{\pi}{2R}$, say $\beta\sim\frac{\pi}{2R}$ and $\frac{\pi}{2R}<\beta<
\frac{\pi}{R}$, such that
\begin{equation*}
\kappa:=\min\bigg\{- \frac{\mu d\beta \cos(\beta R)}{D(d\beta\cos(\beta R)+\nu \sin(\beta R))}, \frac{m-\delta}{d}-\beta^2 \bigg\}>0.
\end{equation*}

Therefore, $(\underline{u},\underline{v})$ satisfies \eqref{beta,w}, provided $\omega^2\le \kappa$. On the other hand, $u(t,x)>0$ and $v(t,x,y)>0$ for all $t>0$ and $(x,y)\in \mathbb{R}\times [0,R)$, and $\partial_y v(t,x,R)<0$ for all $t>0$ and $x\in\mathbb{R}$, which is a direct consequence of  Proposition \ref{cp-strip} and the Hopf lemma. Hence, there is $\varep_0>0$ such that $\varep(\underline{u},\underline{v})\le (u(1,\cdot),v(1,\cdot,\cdot))$ in $\overline\Omega_R$ for all $\varep\in(0,\varep_0]$. It then follows from the same lines as in Theorem \ref{liouville} that there is a nontrivial steady state $(U_1,V_1)$ of \eqref{pb in strip} such that  
\begin{equation}\label{lower-1}
\varep\underline{u}\le U_1\le\liminf\limits_{t\to +\infty}u(t,x),\qquad \varep\underline{v}\le V_1\le\liminf\limits_{ t\to+\infty}v(t,x,y),
\end{equation}
locally uniformly in $\overline{\Omega}_R$, thanks to Proposition \ref{cp--generalized sub-strip}. 
	On the other hand, by choosing $(\overline U,\overline V)$ as in \eqref{super-constant sol} and by using the same argument as in Theorem \ref{liouville}, it comes that there is 
a stationary solution $(U_2, V_2)$ of \eqref{pb in strip} satisfying 
\begin{equation}\label{upper-1}
\limsup\limits_{ t\to+\infty}u(t,x)\le U_2\le\overline U,\qquad \limsup\limits_{ t\to+\infty}v(t,x,y)\le V_2\le\overline V,
\end{equation} 
locally uniformly in $\overline{\Omega}_R$.
%Therefore, there exist  two nonnegative bounded stationary solutions $(U_1,V_1)$ and $(U_2,V_2)$ of \eqref{pb in strip} such that
%\begin{align}
%\label{long-time-strip}
%&U_1\le\liminf\limits_{ t\to+\infty}u(t,x)\le\limsup\limits_{ t\to+\infty}u(t,x)\le U_2,\ \ \
%V_1\le\liminf\limits_{ t\to+\infty}v(t,x,y)\le\limsup\limits_{ t\to+\infty}v(t,x,y)\le V_2,
%\end{align}
%locally uniformly in $\overline{\Omega}_R$.
Therefore, the existence part is proved.

Moreover, let $(U,V)$ be a nonnegative bounded stationary solution of \eqref{pb in strip}. 
From the analysis above and from the elliptic strong maximum principle, one also deduces that, for any given $\hat x\in\mathbb{R}$, for $\forall (x,y)\in(\hat x-\frac{\pi}{2\omega},\hat x+\frac{\pi}{2\omega})\times[0,R)$,
\begin{equation*}
U(x)>\varep\cos(\omega(x-\hat x))>0,~~V(x,y)>\varep\cos(\omega(x-\hat x))\frac{\mu\sin(\beta(R-y))}{d\beta\cos(\beta R)+\nu\sin(\beta R)}>0,~~\text{for all}~\varep\in(0,\varep_0],
\end{equation*}
which implies $\inf_\mathbb{R}U>0$ and  $\inf_{\mathbb{R}\times[0,R)}V>0$. Then, by repeating the uniqueness argument in the proof of Theorem \ref{liouville} and by \eqref{lower-1}--\eqref{upper-1}, the conclusion in Theorem \ref{thm2.1} follows.
\end{proof}

 In the sequel, we show the limiting behavior of the steady state $(U_R,V_R)$ of \eqref{pb in strip} as $R$ goes to infinity, which will play a crucial role in  obtaining the existence of  pulsating fronts in the half-plane $\Omega$ in Section \ref{section5}. 
%%%%%%%%%%%%%%%%%%%%%%%%%%

\begin{proposition}
	\label{prop3.9}
The stationary solution
 $(U_R,V_R)$ of \eqref{pb in strip} satisfies the following properties:
 \begin{itemize}
 	\item[(i)]   $0<U_R<{\nu}/{\mu}$ in $\mathbb{R}$, $0< V_R<1$ in $\mathbb{R}\times [0,R)$;
 	\item[(ii)] the limiting property holds:
		\begin{equation}
		\label{truncated to half-plane}
		(U_R(x),V_R(x,y))\to ({\nu}/{\mu}, 1)\quad \text{as}\ R\to+\infty%~~\text{unif.  in}~x~\text{and locally unif. in}~y.
		\end{equation}
		 uniformly in $x\in\mathbb{R}$ and locally uniformly in $y\in[0,+\infty)$.
		  \end{itemize}
\end{proposition}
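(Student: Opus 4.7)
The strategy is to prove (i) by using the constant state $(\nu/\mu,1)$ as a stationary supersolution of \eqref{pb in strip}, and (ii) by establishing monotonicity of $(U_R,V_R)$ in $R$, extracting a limit as $R\to+\infty$, and identifying it via the uniqueness result of Theorem \ref{liouville}.

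\textbf{Part (i).} Substituting $(\nu/\mu,1)$ into \eqref{pb in strip} gives equality in the two interior equations and in the Robin condition at $y=0$, together with the strict inequality $1>0$ at $y=R$; hence it is a strict stationary supersolution. Let $(\bar u,\bar v)$ denote the Cauchy solution of \eqref{pb in strip} with initial datum $(\nu/\mu,1)$: by Proposition \ref{cp-strip} it is non-increasing in $t$, strictly below $(\nu/\mu,1)$ for $t>0$ (the strict inequality at $y=R$ triggers the strong maximum principle alternative), and by Theorem \ref{thm2.1} it converges locally uniformly to $(U_R,V_R)$ as $t\to+\infty$. Time-monotonicity and passage to the limit then yield $U_R<\nu/\mu$ on $\mathbb{R}$ and $V_R<1$ on $\mathbb{R}\times[0,R)$. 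Strict positivity $U_R>0$ and $V_R>0$ follows from the nontriviality of $(U_R,V_R)$ (Theorem \ref{thm2.1}), the elliptic strong maximum principle applied to $V_R$ in $\Omega_R$, and the Hopf lemma combined with the Robin condition at $y=0$, which successively rule out $V_R(x_0,0)=0$ and then $U_R(x_0)=0$ on the road.

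\textbf{Part (ii).} The key is monotonicity in $R$. For $R_0<R<R'$, extending $V_R$ by zero above $y=R$ produces a generalized stationary subsolution of \eqref{pb in strip} on the strip of width $R'$, realized as the maximum of the two subsolutions $V_R\mathbf{1}_{\overline\Omega_R}$ and $0$: the interior equations are satisfied piecewise, and the left derivative $\partial_y V_R(\cdot,R^-)<0$ (by Hopf in $\Omega_R$) against right derivative $0$ gives a jump of the correct sign for the generalized-subsolution setting of Proposition \ref{cp--generalized sub-strip}. Taking this extension as initial datum for the Cauchy problem in the strip of width $R'$, the associated solution is non-decreasing in $t$, bounded above by $(\nu/\mu,1)$, and converges by Theorem \ref{thm2.1} to the unique nontrivial attractor $(U_{R'},V_{R'})$, which yields $(U_R,V_R)\le (U_{R'},V_{R'})$ in $\overline\Omega_R$. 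Combined with (i), monotone convergence produces a pointwise limit $(U_\infty,V_\infty)$ with $0<U_\infty\le\nu/\mu$ and $0<V_\infty\le 1$; interior elliptic estimates upgrade this to $C^2_{loc}(\overline\Omega)$ convergence, so $(U_\infty,V_\infty)$ is a positive bounded stationary solution of \eqref{pb in half-plane}, hence $(U_\infty,V_\infty)\equiv(\nu/\mu,1)$ by Theorem \ref{liouville}. Uniform convergence in $x$ reduces to convergence on $[0,L]$ by $L$-periodicity, and Dini's theorem applied to the monotone pointwise convergence of continuous functions on $[0,L]\times[0,A]$ for every $A>0$ upgrades this to uniform convergence on such compacts, delivering the claimed locally uniform convergence in $y$.

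\textbf{Main obstacle.} The delicate point is the monotonicity of $(U_R,V_R)$ in $R$: one must justify that the extension of $V_R$ by zero across the Dirichlet face $y=R$ is indeed a generalized stationary subsolution, so that Proposition \ref{cp--generalized sub-strip} applies. If this direct route is considered too subtle, one can bypass monotonicity by a compactness argument: along any $R_n\to+\infty$, interior elliptic estimates extract a $C^2_{loc}$ subsequential limit solving the half-plane stationary problem; nontriviality is ensured by the compactly supported KPP subsolution from the proof of Theorem \ref{thm2.1}, whose parameters $(\omega,\beta)$ can be chosen independent of $R$ for $R$ large; Theorem \ref{liouville} then forces every subsequential limit to coincide with $(\nu/\mu,1)$, so the full sequence converges.
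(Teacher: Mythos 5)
Your proof is correct, and part (i) follows the paper's argument essentially verbatim. For part (ii) the conclusion is the same but your route through the key monotonicity step is genuinely different. The paper establishes $(U_{R_1},V_{R_1})\le(U_{R_2},V_{R_2})$ for $R_1<R_2$ by restricting $(U_{R_2},V_{R_2})$ to $\overline\Omega_{R_1}$ and observing that it is a \emph{strict stationary supersolution} in the smaller strip (since $V_{R_2}(\cdot,R_1)>0$ where the narrower problem imposes a Dirichlet zero), then running the Cauchy flow downward in the smaller strip exactly as in part (i). You instead extend $V_{R_1}$ by zero across $y=R_1$ to get a \emph{generalized subsolution} in the larger strip and run the Cauchy flow upward in $\overline\Omega_{R_2}$. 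Both are valid, but the paper's choice is cleaner: verifying the supersolution property on the truncated domain is immediate, whereas your route forces you to check the jump condition on $\partial_y V_{R_1}$ across the interface $y=R_1$ and invoke Proposition \ref{cp--generalized sub-strip}, which you correctly flag as the delicate point. Your Dini's theorem argument to upgrade monotone pointwise convergence to the stated uniform-in-$x$/locally-uniform-in-$y$ convergence is a welcome extra detail that the paper leaves implicit, and your compactness-based fallback (extracting subsequential limits, forcing nontriviality via the $R$-independent compactly supported KPP subsolution, and invoking the Liouville theorem) is an independently valid alternative that avoids the monotonicity lemma entirely.
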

\begin{proof} 
	% (i)% $U_R(x)>0$ and $V_R(x,y)>0$ are immediate consequence from strong maximum principle and Hopf lemma. Moreover, we notice that $(\nu/\mu,1)$ is obviously a strict stationary supersolution of \eqref{pb in strip}, hence the statement (i) is then complete.
	 
	 (i) From the proof of Theorem \ref{thm2.1}, it is seen that $U_R>0$ in $\mathbb{R}$ and $V_R>0$ in $\mathbb{R}\times[0,R)$. Notice also that $(\nu/\mu,1)$ is obviously a strict stationary supersolution of \eqref{pb in strip}. Let $(\overline u, \overline v)$ be the unique bounded classical solution of \eqref{pb in strip} with initial condition $(\nu/\mu,1)$. It follows from Proposition \ref{cp-strip} that $(\overline u,\overline v)$ is decreasing in $t$. Since $(\overline u(t,\cdot),\overline v(t,\cdot,\cdot))$ converges to $(U_R,V_R)$ as $t\to+\infty$ locally uniformly in $\overline\Omega_R$ by Theorem \ref{thm2.1}, one has $\overline u(t,x)>U_R(x)$ and $\overline v(t,x,y)> V_R(x,y)$ for all $t\ge 0$ and $(x,y)\in\mathbb{R}\times[0,+\infty)$. Therefore, $U_R<\nu/\mu$ in $\mathbb{R}$ and $V_R<1$ in $\mathbb{R}\times[0,+\infty)$. The statement (i) is then proved.
	 
	 %Assume that there exists a point $(x_0,y_0)\in\mathbb{R}\times[0,R)$ such that $V_R(x_0,y_0)=0$. If $y_0\in (0,R)$, since $V_R(x,y)$ satisfies $-d\Delta V=f(x,V)>0$ and $V_R(x_0,y_0)=0$, the strong maximum principle implies that $V_R(x,y)\equiv0$. This is impossible. Therefore, $V_R(x,y)>0$ for $(x,y)\in\mathbb{R}\times(0,R)$. Assume that $V_R(x_0,0)=0$, using the boundary condition on the road leads to $0>-d\partial_y V_R(x_0,0)=\mu U_R(x_0)-\nu V_R(x_0,0)>0$, which is also ruled out. Notice that $(\nu/\mu,1)$ is obviously a strict stationary supersolution of \eqref{pb in strip}, the statement (i) is then complete.
	
	(ii) Now, let us turn to show the limiting property. First, we claim that $(U_R,V_R)$ is increasing in $R$. To prove this, we fix $R_1<R_2$. Denote by $\Omega_i:=\Omega_{R_i}$ ($i=1,2$) and by $(U_i, V_i):=(U_{R_i},V_{R_i})$($i=1,2$) the unique nontrivial stationary solutions of \eqref{pb in strip} in $\overline\Omega_i$ $(i=1,2)$, respectively. One can prove that
	$U_1<U_2$ in $\mathbb{R}$ and $V_1<V_2$ in $\mathbb{R}\times[0,R_1)$, by noticing that $(U_2,V_2)$ is a strict stationary supersolution of \eqref{pb in strip} in $\Omega_1$ and by a similar argument as in (i). Our claim is thereby proved. Due to the boundedness of $(U_R,V_R)$
	in (i), it follows from the monotone convergence theorem and  standard elliptic estimates that $(U_R,V_R)$ converges as $R\to+\infty$ locally uniformly in $\overline\Omega$ to  a classical solution $(U, V)$ of the following stationary problem:
	\begin{equation*}
	\begin{cases}
	-D\partial_{xx}U=\nu V(x,0)-\mu U, & x\in \mathbb{R},  \\
	-d\Delta V =f(x,V),  &(x,y)\in\Omega,\\
	-d\partial_y V(x,0)=\mu U(x)-\nu V(x,0), & x\in \mathbb{R}.
	\end{cases}
	\end{equation*} 
	Owing to Theorem \ref{liouville}, it follows that $(U, V)=({\nu}/{\mu}, 1)$. Thus, \eqref{truncated to half-plane} is proved.
	\end{proof}

 %%%%%%%%%%%%%%%%%%5%%%
 %%%%%%%%%%%%%%%%%%%%%%%%%%%%%%5
 %%%%%%%%%%%%%%%%%%%%%%%%%%%%%%%%%
 \section{Propagation properties in the strip: Proofs of Theorems \ref{thm-asp-strip} and  \ref{thm-PTF-in strip}}
 \label{section4}
 
 	This section is dedicated to  the existence of the asymptotic spreading speed $c^*_R$ and its coincidence with the minimal wave speed for pulsating fronts  for truncated problem \eqref{pb in strip} along the road. In particular, we will give  a variational formula for  $c^*_R$ by using the principal eigenvalue for certain linear eigenvalue problem. As will be shown below, the discussion combines the dynamical system approach for monostable evolution problems developed in \cite{LZ2010} with PDE's method.
 
 %\subsection{Existence of spreading speeds $c^*_{R,\pm}$ in the strip}
  % \label{subsec4.1}

  Let $D:=[0,L]\times[0,R]$ and define the Banach spaces
 	$$X=\{(u,v)\in C([0,L])\times C(D): v(\cdot,R)=0~\text{in}~[0,L]\}$$ with the norm $\Vert(u,v)\Vert_X=\Vert u\Vert_{C([0,L])}+\Vert v\Vert_{C(D)}$, then $(X,X^+)$ is an ordered Banach space and the cone $X^+$ has empty interior. Let $Y$ be a closed subspace of $X$ given by
 	$$Y=\{(u,v)\in C^1([0,L])\times C^1(D): v(\cdot,R)=0~\text{in}~[0,L]\}$$ with the norm $\Vert (u,v)\Vert_Y =\Vert u\Vert_{C^1([0,L])}+\Vert v\Vert_{C^1(D)}$. It is seen that the inclusion map $Y\hookrightarrow X$ is a continuous linear map. Moreover, the cone $Y^+$  has nonempty interior $\text{Int}(Y^+)$, see e.g. \cite[Corollary 4.2]{Smith1995}, given by
 	\begin{align*}
 	\text{Int}(Y^+)=\{(u,v)\in Y^+: (u,v)>(0,0)~ \text{in}~ [0,L]\times[0,R),~ \partial_y v(\cdot,R)<0~\text{in}~[0,L]\}.
 	\end{align*}
 	We write $(u_1,v_1)\ll (u_2,v_2)$ if $(u_1,v_1),(u_2,v_2)\in Y$ and $(u_2,v_2)-(u_1,v_1)\in\text{Int}(Y^+)$.

   Set $\mathcal{H}:=L\mathbb{Z}$. 
   We use $\mathcal{C}$ to denote the set of all bounded and continuous function pairs from $\mathcal{H}$ to $X$, and $\mathcal{D}$ to denote the set of all bounded and continuous function pairs from $\mathcal{H}$ to $Y$. Moreover, any element in $X$ ($Y$) can be regarded as a constant function in $\mathcal{C}$ ($\mathcal{D}$).

   For any $u,v\in\mathcal{C}$, we write $u\ge v$ provided $u(x)\ge v(x)$  for all $x\in\mathcal{H}$, $u>v$  provided $u\ge v$ but $u\neq v$.  
   For $u,v\in\mathcal{D}$, we write $u\gg v$ provided $u(x)\gg v(x)$ for all $x\in\mathcal{H}$. We equip $\mathcal{C}$ $(\mathcal{D})$     with the compact open topology, i.e., $(u_n,v_n)\to (u,v)$ in $\mathcal{C}$ ($\mathcal{D}$) means that  $u_n(x)\to u(x)$ uniformly for $x$ in every compact interval of $\mathbb{R}$ and $v_n(x,y)\to v(x,y)$ uniformly for $(x,y)$
   in every compact subset of $\overline\Omega_R$.

 Define 
  \begin{align*}
  &\mathbb{C}_0:=\{(u,v)\in C(\mathbb{R})\times C(\overline\Omega_R):~ v(\cdot,R)=0~ \text{in}~\mathbb{R}\},\cr
  &\mathbb{C}^1_0:=\{(u,v)\in C^1(\mathbb{R})\times C^1(\overline\Omega_R): v(\cdot,R)=0~\text{in}~\mathbb{R}\}.
  \end{align*}    
   Any continuous and bounded function pair $(u,v)$ in $\mathbb{C}_0$ can be regarded as a function pair $(u(z),v(z))$ in the space $\mathcal{C}$ in the sense that $\big(u(z)(x),v(z)(x,y)\big):=\big(u(x+z),v(x+z,y)\big)$ for all $z\in\mathcal{H}$ and $(x,y)\in D$. In this sense, $(U_R,V_R)\in\mathcal{C}$ and the set
   $$\mathcal{K}:=\big\{(u,v)\in C(\mathbb{R})\times C(\overline\Omega_R): (0,0)\le (u,v)\le (U_R,V_R)~\text{in}~\overline\Omega_R\big\}$$
   is a closed subset of $\mathcal{C}_{(U_R,V_R)}:=\{(u,v)\in\mathcal{C}: (0,0)\le (u,v)\le (U_R,V_R)\}$ and satisfies (K1)--(K5) in \cite{LZ2010}.
   
Define a family of operators $\{Q_t\}_{t\ge 0}$ on $\mathcal{K}$ by
	 \begin{equation*}
	 Q_t[(u_0,v_0)]:=(u(t,\cdot;u_0), v(t,\cdot,\cdot;v_0))~~\text{for}~ (u_0,v_0)\in \mathcal{K},
	 \end{equation*}  
	 where $(u(t,\cdot;u_0), v(t,\cdot,\cdot;v_0))$ is the  solution of system \eqref{pb in strip} with initial datum $(u_0,v_0)\in \mathcal{K}$.
	  It is easily seen that $Q_0[(u_0,v_0)]=(u_0,v_0)$ for all $(u_0,v_0)\in\mathcal{K}$, and $Q_{t}\circ Q_{s}[(u_0,v_0)]=Q_{t+s}[(u_0,v_0)]$ for any $t,s\ge 0$ and $(u_0,v_0)\in\mathcal{K}$. For any given $(u_0,v_0)\in\mathcal{K}$, it can be deduced from Proposition \ref{wellposedness-strip} that $Q_t[(u_0,v_0)]$ is continuous in $t\in[0,+\infty)$ with respect to the compact open topology.

     Assume that $(u_k,v_k)$ and $(u,v)$ are the unique solutions to \eqref{pb in strip} with initial data $(u_{0k},v_{0k})$ and $(u_0,v_0)$ in $\mathcal{K}$, respectively. Suppose that $(u_{0k},v_{0k})\to(u_0,v_0)$ as $k\to+\infty$ locally uniformly in $\overline\Omega_R$, we claim that $(u_{k},v_{k})\to(u,v)$ as $k\to+\infty$ in $C^{1,2}_{loc}([0,+\infty)\times\overline\Omega_R)$, which will imply that $Q_t[(u_0,v_0)]$ is continuous in $(u_0,v_0)\in\mathcal{K}$ uniformly in $t\in[0,T]$ for any $T>0$.
     To prove this, we define a smooth cut-off function $\chi^n: \mathbb{R}\mapsto[0,1]$ such that $\chi^n(\cdot)=1$ in $[-n+1,n-1]$ and $\chi^n(\cdot)=0$ in $\mathbb{R}\backslash[-n,n]$. Then, $(\chi^n u_{0k},\chi^n v_{0k})\to (\chi^n u_0,\chi^n v_0)$ as $k\to+\infty$ uniformly in $[-n,n]$. Let $(u^n_k,v^n_k)$ and $(u^n,v^n)$ be the solutions to  \eqref{pb in strip-cut off} in $D_1:=[-n,n]\times[0,R]$ with initial data
     $(\chi^n u_{0k},\chi^n v_{0k})$ and $(\chi^n u_0,\chi^n v_0)$, respectively.
     One can choose two positive bounded and monotone  function sequences $(\underline u^n_{0k},\underline v^n_{0k})$ and $(\bar u^n_{0k},\bar v^n_{0k})$ in the space $\left\{(u,v)\in C^\infty([-n,n])\times C^\infty(D_1): u(\pm n)=0, v(\cdot,R)=0~ \text{in}~ [-n,n], v(\pm n,\cdot)=0~\text{in}~[0,R]\right\}$, such that
     \begin{align*}
(0,0)\le (\underline u^n_{0k},\underline v^n_{0k})&\le (\chi^n u_{0k},\chi^n v_{0k})\le  (\bar u^n_{0k},\bar v^n_{0k}),\\
(\underline u^n_{0k},\underline v^n_{0k})\nearrow (\chi^n u_0,\chi^n v_0),&~~(\bar u^n_{0k},\bar v^n_{0k})\searrow (\chi^n u_0,\chi^n v_0)~~\text{uniformly in}~D_1~\text{as}~k\to+\infty.
     \end{align*}
 By a comparison argument, it follows that
 \begin{align*}
 (\underline u^n_k,\underline v^n_k)\le (\underline u^n_{k+1},\underline v^n_{k+1})\le (u^n_{k+1},v^n_{k+1})\le (\bar u^n_{k+1},\bar v^n_{k+1})\le (\bar u^n_k,\bar v^n_k)~\text{for all}~t>0~\text{and}~(x,y)\in D_1,
 \end{align*}    
 where $(\underline u^n_k,\underline v^n_k)$ and $(\bar u^n_k,\bar v^n_k)$ are the classical solutions to \eqref{pb in strip-cut off} with initial data $(\underline u^n_{0k},\underline v^n_{0k})$ and $(\bar u^n_{0k},\bar v^n_{0k})$, respectively. From standard parabolic estimates, the functions $(\underline u^n_k,\underline v^n_k)$ and $(\bar u^n_k,\bar v^n_k)$ converge  to $(\underline u^n,\underline v^n)$ and $(\bar u^n,\bar v^n)$ as $k\to+\infty$ in $C^{1+\alpha/2,2+\alpha}_{loc}([0,+\infty)\times D_1)$, respectively. Moreover, $(\underline u^n,\underline v^n)$ and $(\bar u^n,\bar v^n)$ are classical solutions to \eqref{pb in strip-cut off}. Since 
 \begin{align*}
 \lim_{
 	t\to 0,k\to +\infty
 	}(\underline u^n_k (t,\cdot),\underline v^n_k(t,\cdot,\cdot)) = \lim_{
 	t\to 0,k\to +\infty
 } (\bar u^n_k (t,\cdot),\bar v^n_k (t,\cdot,\cdot))=(\chi^n u_0,\chi^n v_0),
 \end{align*}
 uniformly in $(x,y)\in D_1$, therefore
 \begin{align*}
  \lim_{
 		t\to 0,k\to +\infty
 }(u^n_{k}(t,\cdot),v^n_{k}(t,\cdot,\cdot))=(\chi^n u_0,\chi^n v_0)= \lim_{
 t\to 0} (u^n(t,\cdot),v^n(t,\cdot,\cdot)),
 \end{align*}
 uniformly in $(x,y)\in D_1$, and by the uniqueness of the solutions to \eqref{pb in strip-cut off}, it follows that
 $$(\underline u^n,\underline v^n)=(\bar u^n,\bar v^n)=(u^n,v^n)~\text{for}~t>0~\text{and}~(x,y)\in D_1.$$
 Hence, $(u^n_k,v^n_k)\to (u^n,v^n)$ as $k\to+\infty$
 in $C^{1+\alpha/2,2+\alpha}([0,T]\times D_1)$ for any $T>0$. By the approximation argument and parabolic estimates, $(u^n_k,v^n_k)$ and $(u^n,v^n)$ converge, respectively, to $(u_k,v_k)$ and $(u,v)$ as $n\to+\infty$ (at least) in $C^{1,2}_{loc}([0,+\infty)\times\overline\Omega_R)$. Consequently, $(u_k,v_k)\to (u,v)$ as $k\to+\infty$ in $C^{1,2}_{loc}([0,+\infty)\times\overline\Omega_R)$.

	  From the observation that for any $t,s\ge 0$ and for  $(u_0,v_0), (\tilde u_0,\tilde v_0)\in\mathcal{K}$,
	 \begin{equation*}
	 \big|Q_t[(u_0,v_0)]-Q_s[(\tilde u_0,\tilde v_0)]\big|\le \big|Q_t[(u_0,v_0)]-Q_t[(\tilde u_0,\tilde v_0)]\big|+\big|Q_t[(\tilde u_0,\tilde v_0)]-Q_s[(\tilde u_0,\tilde v_0)]\big|,
	 \end{equation*}
	 it comes that $Q_t[(u_0,v_0)]$ is continuous in
	   $(t,(u_0,v_0))\in[0,T]\times \mathcal{K}$. Note that for any $t>0$, it can be expressed as $t=mT+t'$ for some $m\in\mathbb{Z}_+$ and $t'\in[0,T)$. Hence, $Q_t[(u_0,v_0)]=(Q_T)^m Q_{t'}[(u_0,v_0)]$. Thus,  $Q_t[(u_0,v_0)]$ is continuous in $(t,(u_0,v_0))\in[0,+\infty)\times \mathcal{K}$. Therefore, it follows that $\{Q_t\}_{t\ge 0}$ is a continuous-time semiflow.
	 We claim that $\{Q_t\}_{t\ge 0}$ is subhomogeneous on $\mathcal{K}$ in the sense that $Q_t[\kappa(u_0,v_0)]\ge \kappa Q_t[(u_0,v_0)]$ for all $\kappa\in[0,1]$ and for all $(u_0,v_0)\in\mathcal{K}$. The case that $\kappa=0,1$ is trivial. Suppose now that $\kappa\in(0,1)$. Define	 
	 \begin{equation*}
	 (\overline u,\overline v)=(u(t,\cdot;\kappa u_0), v(t,\cdot,\cdot;\kappa v_0)),~~~(\underline u,\underline v)=\kappa (u(t,\cdot;u_0),v(t,\cdot,\cdot;v_0)).
	 \end{equation*}
	 From Proposition \ref{cp-strip}, it follows that $(\overline u,\overline v)$ and $(\underline u,\underline v)$ belong to $\mathcal{K}$. Moreover, $(\overline u,\overline v)$ and $(\underline u,\underline v)$ satisfy, respectively,
	 	 \begin{equation*}
	 \begin{cases}
	 \partial_t\overline u-D\partial_{xx}\overline{u}= \nu\overline{v}(t,x,0)-\mu \overline{u}, &t>0, x\in \mathbb{R}, \\
	 \partial_t \overline v-d\Delta\overline{v} = f(x,\overline{v}),  &t>0, (x,y)\in\Omega_R,\\
	 -d\partial_y \overline{v}(t,x,0)=\mu \overline{u}-\nu \overline{v}(t,x,0), &t>0,  x\in \mathbb{R},\\
	 \overline{v}(t,x,R)=0, &t>0, x\in\mathbb{R},\\
	 (\overline u_0,\overline v_0)=\kappa(u_0,v_0),
	 \end{cases}
	 \end{equation*}
	 and
	 \begin{equation*}
	 \begin{cases}
	 \partial_t\underline u-D\partial_{xx}\underline{u}= \nu\underline{v}(t,x,0)-\mu \underline{u}, &t>0, x\in \mathbb{R}, \\
	 \partial_t \underline v-d\Delta\underline{v} < f(x,\underline{v}),  &t>0, (x,y)\in\Omega_R,\\
	 -d\partial_y \underline{v}(t,x,0)=\mu \underline{u}-\nu \underline{v}(t,x,0), &t>0,  x\in \mathbb{R},\\
	 \underline{v}(t,x,R)=0, &t>0, x\in\mathbb{R},\\
	  (\underline u_0,\underline v_0)=\kappa(u_0,v_0),
	 \end{cases}
	 \end{equation*}
	 by using the assumption that $f(x,v)/v$ is decreasing in $v>0$ for all $x\in\mathbb{R}$.
	 Proposition \ref{cp-strip} then yields that $\overline u(t,x)\ge \underline u(t,x)$ and  $\overline v(t,x,y)\ge \underline v(t,x,y)$ for all $t\ge 0$ and $(x,y)\in\overline\Omega_R$. This proves our claim.
	 By classical parabolic theory, together with Propositions \ref{cp-strip}--\ref{wellposedness-strip} and Theorem \ref{thm2.1}, for each $t>0$, the solution map $Q_t:\mathcal{K}\to\mathcal{K}$ satisfies the following properties:
	   \begin{itemize}
	 	\item[(A1)]  $Q_t[T_a[(u_0,v_0)]]=T_a[Q_t[(u_0,v_0)]]$ for all $(u_0,v_0)\in\mathcal{K}$ and $a\in \mathcal{H}$, where $T_a$  is a shift operator defined by $T_a[(u(t,x),v(t,x,y))]=(u(t,x-a),v(t,x-a,y))$.
	 	
	 	\item[(A2)]   $Q_t[\mathcal{K}]$ is uniformly bounded and $Q_t:\mathcal{K}\to \mathcal{D}$ is  continuous with respect to the compact open topology, due to the analysis above. 
	 	 \item[(A3)]  $Q_t:\mathcal{K}\to \mathcal{D}$ is compact with respect to the compact open topology, which follows from Proposition \ref{wellposedness-strip}.
	 	 
	 	 \item[(A4)]  $Q_t: \mathcal{K}\to \mathcal{K}$ is  monotone (order-preserving) in the sense that if $(u_{01},v_{01})$ and $(u_{02},v_{02})$ belong to $ \mathcal{K}$ satisfying $u_{01}\le u_{02}$ in $\mathbb{R}$ and  $v_{01}\le v_{02}$ in $\overline{\Omega}_R$, then $u(t,x;u_{01})\le u(t,x;u_{02})$ and $ v(t,x,y;v_{01})\le v(t,x,y;v_{02})$ for all $t>0$ and  $(x,y)\in\overline{\Omega}_R$. This follows from  Proposition \ref{cp-strip}.
	 	 
\item[(A5)] 
 $Q_t$ admits exactly two fixed points $(0,0)$ and $(U_R,V_R)$ in $Y$. Let $(u(t,x;u_0),v(t,x,y;v_0))$ be the solution of \eqref{pb in strip} with $L$-periodic (in $x$)  initial value  $(u_0,v_0)\in\mathcal{K}\cap Y$  satisfying $(0,0)\ll(u_0,v_0)\le (U_R,V_R)$, it comes that
	 	 \begin{equation}
	 	 \label{long time -periodic initial}
	 	 \lim_{t\to +\infty} (u(t,x;u_0),v(t,x,y;v_0))= (U_R(x),V_R(x,y))~~\text{uniformly in}~(x,y)\in\overline\Omega_R.
	 	 \end{equation}
	  Indeed,  Theorem \ref{thm2.1} implies that  $(U_R,V_R)$
	 	 is the unique $L$-periodic positive steady state     of \eqref{pb in strip}.  Moreover, \eqref{long time -periodic initial} can be achieved by a similar argument to that of Theorem \ref{thm2.1}.
  \end{itemize}

Therefore, $Q_t$ is a subhomogeneous semiflow on $\mathcal{K}$ and satisfies hypotheses (A1)--(A5) in \cite{LZ2010} for any $t>0$. Moreover, it is straightforward to check that assumption (A6) in \cite{LZ2010} is also satisfied.
  In particular, $Q_1$ satisfies (A1)--(A6) in \cite{LZ2010}. By Theorem 3.1 and Proposition 3.2 in \cite{LZ2010}, it then follows that the solution map $Q_1$ admits rightward and leftward spreading speeds $c^*_{R,\pm}$. Furthermore, Theorems 4.1--4.2 in \cite{LZ2010} imply that  $Q_1$ has a rightward periodic traveling wave $(\phi_R(x-c,x),\psi_R(x-c,x,y))$ connecting $(U_R,V_R)$ and $(0,0)$ such that $(\phi_R(s,x),\psi_R(s,x,y))$ is non-increasing in $s$ if and only if $c\ge c^*_{R,+}$. Similar results holds for  leftward periodic traveling waves with minimal wave speed $c^*_{R,-}$.

    %%%%%%%%%%%%%%%%%%%%%%%%%%%%%%%%

 %%%%%%%%%%%%%%%%%%%%%%%%%%%%%%%%%%%%%%%%
% \subsection{Characterization of spreading speeds $c^*_{R,\pm}$}\label{characterization}

 To obtain the variational formulas for $c^*_{R,\pm}$, we use the linear operators approach. 
   Let us consider the linearization  of the truncated problem  \eqref{pb in strip} at its zero solution:
 \begin{equation}
 \label{linear pb in strip}
 \begin{cases}
 \partial_t u -D\partial_{xx}u=\nu v(t,x,0)-\mu u, &t>0,\  x\in \mathbb{R}, \\
 \partial_t v -d\Delta v =f_v(x,0)v,  &t>0,\ (x,y)\in\Omega_R,\\
 -d\partial_y v(t,x,0)=\mu u-\nu v(t,x,0), &t>0,\  x\in \mathbb{R},\\
 v(t,x,R)=0, &t>0,\  x\in \mathbb{R}.%\\
 %(u(0,x), v(0,x,y))=(u_0, v_0)\in \mathcal{C}.
 \end{cases}
 \end{equation}
 Let $\{L(t)\}_{t\ge 0}$ be the linear solution semigroup generated by \eqref{linear pb in strip}, that is, $L(t)[(u_0, v_0)]=(u_t(u_0),v_t(v_0))$, where $(u_t(u_0),v_t(v_0)):=(u(t,\cdot;u_0),v(t,\cdot,\cdot;v_0))$ is the solution of \eqref{linear pb in strip} with initial value $(u_0, v_0)\in \mathcal{D}$. For any given $\alpha\in\mathbb{R}$, substituting $(u(t,x),v(t,x,y))=e^{-\alpha x}(\widetilde{u}(t,x),\widetilde{v}(t,x,y))$ in \eqref{linear pb in strip} yields 
 \begin{equation}
 \label{modified linear pb in strip}
 \begin{cases}
 \partial_t \widetilde u -D\partial_{xx}\widetilde u+2D\alpha\partial_x \widetilde{u}-D\alpha^2\widetilde u=\nu\widetilde v(t,x,0)-\mu\widetilde u, &t>0,\  x\in \mathbb{R}, \\
 \partial_t\widetilde v -d\Delta\widetilde v +2d\alpha\partial_x \widetilde{v}-d\alpha^2\widetilde{v} =f_v(x,0)\widetilde v,  &t>0,\ (x,y)\in\Omega_R,\\
 -d\partial_y\widetilde v(t,x,0)=\mu \widetilde u-\nu\widetilde v(t,x,0), &t>0,\  x\in \mathbb{R},\\
 \widetilde v(t,x,R)=0, &t>0,\  x\in \mathbb{R}. %\\
% (\widetilde u_0, \widetilde v_0)=(u_0,v_0) e^{\alpha x}.
 \end{cases}
 \end{equation}
 Let $\{L_\alpha (t)\}_{t\ge 0}$ be the linear solution semigroup generated by \eqref{modified linear pb in strip}, then one has $L_\alpha(t)[(\widetilde u_0, \widetilde v_0)]=(\widetilde{u}_t(\widetilde u_0),\widetilde{v}_t(\widetilde v_0))$, where $(\widetilde{u}_t(\widetilde u_0),\widetilde{v}_t(\widetilde v_0)):=(\widetilde u(t,\cdot;\widetilde u_0),\widetilde v(t,\cdot,\cdot;\widetilde v_0))$ is the solution of \eqref{modified linear pb in strip} with initial value $(\widetilde u_0, \widetilde v_0)=(u_0,v_0)e^{\alpha x}$. It then follows that, for any $(\widetilde u_0, \widetilde v_0)\in\mathcal{D}$,
 \begin{equation*}
 L(t)[e^{-\alpha x}(\widetilde u_0, \widetilde v_0)]=e^{-\alpha x}L_\alpha (t)[(\widetilde u_0, \widetilde v_0)]~~~~\text{for}~ t\ge 0~\text{and}~ (x,y)\in\overline{\Omega}_R.
 \end{equation*}
Substituting $(\tilde u(t,x), \tilde v(t,x,y))=e^{-\sigma t}(p(x),q(x,y))$, with $p,q$ periodic (in $x$), into \eqref{modified linear pb in strip} leads to the following periodic eigenvalue problem:	\begin{align}
\label{5.15/5.18'}
\begin{cases}
\mathcal{L}_{1,\alpha}(p, q):=-Dp''+2D\alpha p'+(-D\alpha^2+\mu) p-\nu q(x,0)=\sigma p,  &x\in\mathbb{R},\\
\mathcal{L}_{2,\alpha}( p, q):=-d\Delta q+2d\alpha\partial_x q-(d\alpha^2+f_v(x,0)) q=\sigma q,  &(x,y)\in\Omega_R,\\
\mathcal{B}( p, q):=-d\partial_y q(x,0)+\nu q(x,0)-\mu p=0, \   &x\in\mathbb{R},\\
q(x,R)=0, \   &x\in\mathbb{R},\\
p,  q \ \text{are}~ L\text{-periodic with respect to} \ x.
\end{cases}
\end{align}

 Recall that  $M:=\max_{[0,L]}f_v(x,0)$ and $m:=\min_{[0,L]}f_v(x,0)$.
 We have:  
 \begin{proposition}
 	\label{principal eigenvalue in strip} 
Set $\zeta(x):=f_v(x,0)$.
For all $\alpha\in\mathbb{R}$, the periodic eigenvalue problem \eqref{5.15/5.18'} admits the principal eigenvalue $\lambda_{R,\zeta}(\alpha)\in\mathbb{R}$ with a unique (up to multiplication by some  constant) positive periodic (in $x$) eigenfunction pair $(p,q)$ belonging to  $ C^{3}(\mathbb{R})\times C^{3}(\overline\Omega_R)$.	Moreover, $\lambda_{R,\zeta}(\alpha)$ has the following properties:
	\begin{enumerate}[(i)]
	\item For all $\alpha\in\mathbb{R}$, the principal eigenvalue $\lambda_{R,\zeta}(\alpha)$ is equal to 
	\begin{equation}
	\label{5.16'}
	\lambda_{R,\zeta}(\alpha)=\max_{( p, q)\in \varSigma}\min\bigg\{\inf_{\mathbb{R}}\frac{\mathcal{L}_{1,\alpha}( p, q)}{ p}, \inf_{\mathbb{R}\times[0,R)}\frac{\mathcal{L}_{2,\alpha}( p, q)}{ q}\bigg\},
	\end{equation}
	where 
	\begin{align*}
	\varSigma:=&\big\{( p, q)\in C^2(\mathbb{R})\times C^{2}(\overline\Omega_R):~ p>0~\text{in}~\mathbb{R}, q>0~\text{in} \ \mathbb{R}\times[0,R),~~~~~~~~~~~~~\\ &~p,q~\text{are}~L\text{-periodic in}~x,~\mathcal{B}(p, q)= 0 \ \text{in} \ \mathbb{R},
	~ \partial_y q(\cdot,R)<0= q(\cdot,R)~\text{in}~\mathbb{R}\big\}.
	\end{align*} 
	
	\item For fixed $R$ and for all $\alpha\in\mathbb{R}$, $\zeta\mapsto\lambda_{R,\zeta}(\alpha)$ is  non-increasing in the sense that, if $\zeta_1(x)\le \zeta_2(x)$ for all $x\in\mathbb{R}$, then $\lambda_{R,\zeta_1}(\alpha)\ge\lambda_{R,\zeta_2}(\alpha)$. Moreover, $\lambda_{R,\zeta}(\alpha)$ is continuous  with respect to $\zeta$ in the sense that, if $\zeta_n\to  \zeta$, then $\lambda_{R,\zeta_n}(\alpha)\to\lambda_{R,\zeta}(\alpha)$. 

	\item For all $\alpha\in\mathbb{R}$, $ \lambda_{R,\zeta}(\alpha)$ is decreasing with respect to $R$.

	\item For fixed $R$, $\alpha\mapsto\lambda_{R,\zeta}(\alpha)$ is  concave in $\mathbb{R}$ and satisfies
	\begin{align}
	\label{bound'}
\max\Big\{D\alpha^2-\mu,\  d\alpha^2+m-d\frac{\pi^2}{R^2}\Big\}<-\lambda_{R,\zeta}(\alpha)<\max\Big\{D\alpha^2+\nu-\mu+\frac{\mu\nu}{d},\  d(\alpha^2+1)+M\Big\}.
	\end{align}
	\end{enumerate}
\end{proposition}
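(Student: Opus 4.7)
The overall strategy is to apply the Krein--Rutman theorem on the compact periodic cell $[0,L]\times[0,R]$ to obtain the principal eigenvalue and eigenfunction pair, to establish the variational characterization (i), and then to read off the remaining properties (ii)--(iv) from (i) combined with explicit test-function computations.

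For existence and uniqueness, I would work in the Banach space
\[
\mathcal{E}:=\{(p,q)\in C([0,L])\times C([0,L]\times[0,R]):~ p,q~L\text{-periodic in}~x,~q(\cdot,R)=0\}
\]
equipped with its natural positive cone. For $K>0$ sufficiently large, the shifted elliptic system obtained by replacing $\sigma$ by $-K$ in \eqref{5.15/5.18'} is uniquely solvable for any nonnegative datum in $\mathcal{E}$, and the resulting resolvent $\mathcal{T}_K:\mathcal{E}\to\mathcal{E}$ is compact (by Schauder estimates on the compact periodic cell) and strongly positive: the strong maximum principle on each component, the Robin coupling at $y=0$ transferring positivity between $p$ and $q$, and Hopf's lemma at $y=R$ giving $\partial_y q(\cdot,R)<0$ together imply $\mathcal{T}_K$ maps nonnegative nontrivial pairs into $\textnormal{Int}(Y^+)$. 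Krein--Rutman then produces a unique positive eigenvalue $\tau_\star$ with a unique (up to scaling) positive eigenfunction pair, and setting $\lambda_{R,\zeta}(\alpha):=K-\tau_\star^{-1}$ furnishes the principal eigenvalue. The $C^3$-regularity is standard bootstrapping via Schauder theory, using $f_v(\cdot,0)\in C^{1,\delta}$.

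For (i), the inequality $\lambda_{R,\zeta}(\alpha)\geq$ RHS is immediate by plugging $(P_{\alpha,R},Q_{\alpha,R})$ into \eqref{5.16'}. For the reverse, given $(p,q)\in\varSigma$ whose min-max value is $\sigma$, suppose $\sigma>\lambda_{R,\zeta}(\alpha)$ and set $\kappa^\star:=\sup\{\kappa>0:~\kappa(P_{\alpha,R},Q_{\alpha,R})\leq (p,q)~\text{in}~\overline{\Omega}_R\}\in(0,+\infty)$. By linearity of $\mathcal{L}_{i,\alpha}$, the pair $(u,v):=(p-\kappa^\star P_{\alpha,R},q-\kappa^\star Q_{\alpha,R})\geq 0$ satisfies $\mathcal{L}_{1,\alpha}(u,v)>\sigma u$ on $\mathbb{R}$ and $\mathcal{L}_{2,\alpha}(u,v)>\sigma v$ on $\Omega_R$; the strong maximum principle rules out interior touching of $v$, Hopf's lemma at $y=0$ together with $\mathcal{B}(u,v)=0$ rules out touching on the road, and Hopf's lemma at $y=R$ rules out ``derivative touching'' there (using $\partial_y v(\cdot,R)<0$ strictly). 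This contradicts the finiteness of $\kappa^\star$. Properties (ii) and (iii) follow from (i): monotonicity in $\zeta$ is immediate because $\zeta$ enters \eqref{5.16'} only through $-\zeta$, and continuity follows by perturbation using the eigenfunction representation and the uniform bounds of (iv); for (iii), the restriction of $(P_{\alpha,R_2},Q_{\alpha,R_2})$ to $\overline{\Omega}_{R_1}$ is a strict supersolution of the eigenvalue problem on $\overline{\Omega}_{R_1}$ at level $\lambda_{R_2,\zeta}(\alpha)$ since $Q_{\alpha,R_2}(\cdot,R_1)>0$, so the same sliding argument yields $\lambda_{R_2,\zeta}(\alpha)<\lambda_{R_1,\zeta}(\alpha)$.

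For (iv), concavity of $\alpha\mapsto\lambda_{R,\zeta}(\alpha)$ is the classical geometric-mean trick: for $\alpha=\theta\alpha_1+(1-\theta)\alpha_2$ with $\theta\in(0,1)$, the pair $(P_{\alpha_1,R}^\theta P_{\alpha_2,R}^{1-\theta},\,Q_{\alpha_1,R}^\theta Q_{\alpha_2,R}^{1-\theta})$, after a small adjustment of the $P$-component to preserve $\mathcal{B}=0$, is an admissible sub-eigenfunction whose ratios are bounded below by $\theta\lambda_{R,\zeta}(\alpha_1)+(1-\theta)\lambda_{R,\zeta}(\alpha_2)$, so (i) delivers concavity. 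For the bounds \eqref{bound'}: integrating the first eigenvalue equation over one period in $x$ gives $(\mu-D\alpha^2-\lambda_{R,\zeta}(\alpha))\int_0^L P_{\alpha,R}=\nu\int_0^L Q_{\alpha,R}(\cdot,0)>0$, hence $-\lambda_{R,\zeta}(\alpha)>D\alpha^2-\mu$; the field-side lower bound $-\lambda_{R,\zeta}(\alpha)>d\alpha^2+m-d\pi^2/R^2$ comes from pairing the second equation with a positive $y$-weight associated with the first eigenfunction of $-d\partial_{yy}$ on $[0,R]$ (with compatible endpoint conditions) and using $\zeta\geq m$; the upper bounds come from plugging decoupled admissible pairs into \eqref{5.16'}, e.g.\ $q(x,y)=\phi(x)\sin(\pi(R-y)/R)$ with $p$ chosen so that $\mathcal{B}(p,q)=0$. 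The main obstacle is the coupled sliding argument in (i) (and its re-use in (iii)): the Robin exchange at $y=0$ intertwines $p$ and $q$, so one must run the strong maximum principle simultaneously on road and field and carry out a touching-point case analysis invoking three distinct tools --- the interior strong maximum principle, the boundary Hopf lemma with Robin coupling at $y=0$, and the boundary Hopf lemma at the Dirichlet end $y=R$. Once these coupled maximum principles are in place, the remaining assertions follow from routine adaptations of standard KPP-type computations.
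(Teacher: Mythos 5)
Your plan follows essentially the same route as the paper: Krein--Rutman on a compact periodic cell via a shifted resolvent, the variational formula by a sliding argument combining the interior strong maximum principle with Hopf's lemma at both the Robin boundary $y=0$ and the Dirichlet boundary $y=R$, monotonicity of $R\mapsto\lambda_{R,\zeta}(\alpha)$ by restriction of the larger-domain eigenpair, and concavity by the geometric-mean trick. Two points need attention.

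First, a labelling slip in (i): plugging the principal eigenpair $(P_{\alpha,R},Q_{\alpha,R})$ into the min gives exactly $\lambda_{R,\zeta}(\alpha)$, so this establishes that the max is $\geq\lambda_{R,\zeta}(\alpha)$, i.e.\ $\lambda_{R,\zeta}(\alpha)\leq$ RHS; that is the genuinely immediate direction. Your sliding/contradiction argument gives the reverse inequality $\lambda_{R,\zeta}(\alpha)\geq$ RHS. As written you state the same inequality twice and never record the easy one. Second, and more seriously, the proposed test function for the upper bound of (iv) does not lie in $\varSigma$. With $q(x,y)=\phi(x)\sin(\pi(R-y)/R)$ one has $q(\cdot,0)\equiv 0$, while $\partial_y q(\cdot,0)=\frac{\pi}{R}\phi>0$; the constraint $\mathcal{B}(p,q)=-d\partial_y q(\cdot,0)+\nu q(\cdot,0)-\mu p=0$ then forces $p=-\frac{d\pi}{\mu R}\phi<0$, which is inadmissible (and this is not an artifact of the specific sine: any admissible $q$ vanishing on the road would, by Hopf, force $p<0$). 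You need an admissible pair with $q(\cdot,0)>0$. The paper instead reads this upper bound directly off the Krein--Rutman construction: with the shift $\Lambda=\Lambda_\zeta(\alpha):=\max\{D\alpha^2+\nu-\mu+\mu\nu/d,\,d(\alpha^2+1)+M\}$ chosen so that the pair $K(1,\,1+\frac{\mu}{d}e^{-y})$ is a supersolution of the shifted system, the resolvent's Krein--Rutman eigenvalue $\lambda^*_{R,\zeta}(\alpha)$ is positive, and $\lambda_{R,\zeta}(\alpha)=\lambda^*_{R,\zeta}(\alpha)-\Lambda_\zeta(\alpha)>-\Lambda_\zeta(\alpha)$. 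If you want to keep a test-function approach for the upper bound you should adapt this explicit pair (truncated to vanish at $y=R$), not a $y$-profile vanishing on the road. Your road-side lower bound by integrating the first eigenvalue equation over a period is a clean improvement on the paper's argument there, and your acknowledgment that the geometric mean requires an adjustment to restore $\mathcal{B}=0$ is actually more careful than the paper, which asserts admissibility without comment.
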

%In order to keep the arguments clear for the reader's convenience, we postpone the proof of this proposition in Appendix A.
\begin{proof}[Proof of Proposition \ref{principal eigenvalue in strip}]
	The proof is divided into six  steps.
	
	\textit{Step 1. Solving the eigenvalue problem \eqref{5.15/5.18'}.} Set $\Lambda_\zeta(\alpha):=\max\big\{D\alpha^2+\nu-\mu+{\mu\nu}/{d}, d(\alpha^2+1)+M\big\}$.  We introduce a Banach space $\mathcal{F}$ of periodic (in $x$) function pairs $(u,v)$ belonging to $C^{1}(\mathbb{R})\times C^{1}(\overline\Omega_R)$ such that $v(\cdot,R)=0$ in $\mathbb{R}$,
	equipped with $\Vert(u,v)\Vert_{\mathcal{F}}=\Vert u\Vert_{C^1([0,L])}+\Vert u\Vert_{C^1([0,L]\times[0,R])}$. For any $(g_1, g_2)\in \mathcal{F}$  and $\Lambda\ge \Lambda_{\zeta}(\alpha)$, let us consider the modified problem:
	\begin{align}
	\label{5.17'}
	\begin{cases}
	\mathcal{L}_{1,\alpha}( p, q)+\Lambda p=g_1, \  &x\in\mathbb{R},\\
	\mathcal{L}_{2,\alpha}( p, q)+\Lambda q=g_2, \  &(x,y)\in\Omega_R,\\
	\mathcal{B}( p, q)=0, \  \quad &x\in\mathbb{R},\\
	q(x,R)=0, \  \quad &x\in\mathbb{R},\\
	p,  q \ \text{are} \ L\text{-periodic with respect to} \ x.
	\end{cases}
	\end{align}
	First, we construct  ordered super- and subsolutions for problem \eqref{5.17'}. Set $(\overline p,\overline q)=K(1,1+\frac{\mu}{d}e^{-y})$. Choosing $K>0$ large enough (depending only on $\Vert g_1\Vert_{L^\infty(\mathbb{R})}$ and $\Vert g_2\Vert_{L^\infty(\overline\Omega_R)}$ if $g_1$, $g_2$ are positive), it follows that  $(\overline p,\overline q)$ is indeed a strict supersolution of \eqref{5.17'}. By linearity of \eqref{5.17'}, up to increasing $K$ (depending only on $\Vert g_1\Vert_{L^\infty(\mathbb{R})}$ and $\Vert g_2\Vert_{L^\infty(\overline\Omega_R)}$ if $g_1$, $g_2$ are negative), $(\underline{ p},\underline{ q}):=-(\overline{ p}, \overline{ q})$ is a negative strict subsolution of \eqref{5.17'}. By monotone iteration method, it is known that the associated evolution problem of \eqref{5.17'}  with initial datum $(\overline p,\overline q)$ is uniquely solvable and its solution $(\overline u,\overline v)$ is  decreasing in time and is bounded from below by $(\underline p,\underline q)$  and from above by  $(\overline p,\overline q)$, respectively. From the monotone convergence theorem as well as elliptic regularity theory up to the boundary, it follows that $(\overline u,\overline v)$ converges as $t\to+\infty$ locally uniformly in $\overline\Omega_R$ to a classical periodic (in $x$) solution $(p,q)\in C^{3}(\mathbb{R})\times C^{3}(\overline\Omega_R)$ of problem \eqref{5.17'}. To prove  uniqueness of the solution to \eqref{5.17'}, we first claim that $g_1\ge 0$ in $\mathbb{R}$, $g_2\ge 0$ in $\overline\Omega_R$ implies that $ p\ge 0$ in $\mathbb{R}$, $q\ge 0$ in $\mathbb{R}\times[0,R)$. Indeed, for any fixed nonnegative  function pair $(g_1,g_2)\in \mathcal{F}$, let $( p,  q)$ be the unique solution to \eqref{5.17'}. One can easily check that, for any $K>0$, $(\underline{ p},\underline{ q})$ defined as above is a strict subsolution of \eqref{5.17'}. Assume that $p$ or $ q$ attains a negative value somewhere in their respective domains. Define
	\begin{equation*}
	\theta^*:=\min\big\{\theta>0:~ ( p, q)\ge\theta(\underline{ p},\underline{ q}) \ \text{in} \ \overline\Omega_R\big\}.
	\end{equation*}
	Then, $\theta^*\in(0,+\infty)$. The function pair $( p-\theta^*\underline{ p}, q-\theta^*\underline{ q})$ is nonnegative
	 and at least one component attains zero somewhere in $\mathbb{R}\times[0,R)$ by noticing $(q-\theta^*\underline q)(\cdot,R)>0$ in $\mathbb{R}$. Set  $(w,z):=( p-\theta^*\underline{ p}, q-\theta^*\underline{ q})$, then it satisfies
	 \begin{align}
	 \label{4.8}
	 \begin{cases}
	 -Dw''+2D\alpha w'+(\Lambda-D\alpha^2+\mu) w-\nu z(x,0)\ge 0,  &x\in\mathbb{R},\\
	 -d\Delta z+2d\alpha\partial_x z+(\Lambda-d\alpha^2-\zeta(x)) z> 0,  &(x,y)\in\Omega_R,\\
	 -d\partial_y z(x,0)+\nu z(x,0)-\mu w > 0, \   &x\in\mathbb{R},\\
	 z(x,R)>0, \   &x\in\mathbb{R},\\
	 w,  z \ \text{are}~ L\text{-periodic with respect to} \ x.
	 \end{cases}
	 \end{align}
	 Assume first that there is $(x_0,y_0)\in\mathbb{R}\times[0,R)$ such that $z(x_0,y_0)=0$. There are two subcases. Suppose that $(x_0,y_0)\in\Omega_R$, then the strong maximum principle implies that $z\equiv 0$ in $\Omega_R$. This contradicts the strict inequality of $z$ in \eqref{4.8}, whence $z>0$ in $\Omega_R$. Suppose now that $y_0=0$ and $z(x_0,0)=0$, it follows that $\partial_y z(x_0,0)>0$. One then deduces from $-d\partial_y z(x_0,0)+\nu z(x_0,0)-\mu w(x_0)>0$ that $w(x_0)<-(d/\mu)\partial_y z(x_0,0)<0$, which is impossible since $w\ge 0$ in $\mathbb{R}$. Therefore, $z>0$ in $\overline\Omega_R$. It is seen from the first inequality of \eqref{4.8} that 
	 \begin{equation}
	 \label{4.9}
	 -Dw''+2D\alpha w'+(\Lambda-D\alpha^2+\mu) w\ge \nu z(\cdot,0)>0  ~~\text{in}~\mathbb{R}.
	 \end{equation}
	 Finally, assume that there is $x_0\in\mathbb{R}$ such that $w(x_0)=0$, then the strong maximum principle implies that $w\equiv 0$ in $\mathbb{R}$. This contradicts the strict inequality in \eqref{4.9}.
	 Consequently, $ p\ge 0$ on $\mathbb{R}$ and $ q\ge 0$ in $\overline \Omega_R$. If we further assume that $g_1\not\equiv 0$ in $\mathbb{R}$ or $g_2\not\equiv 0$ in $\mathbb{R}\times[0,R)$, then $p>0$  in $\mathbb{R}$ and $q>0$ in $\mathbb{R}\times[0,R)$. This can be proved by the strong maximum principle and by a similar argument as above.

	To prove uniqueness, we assume that $( p_1,  q_1)$ and $( p_2,  q_2)$ are two distinct solutions of \eqref{5.17'}, then $( p_1- p_2,  q_1- q_2)$ satisfies \eqref{5.17'} with $g_1=0$ and $g_2=0$. Using the result derived from above, we conclude that $ p_1\equiv p_2$ in $\mathbb{R}$, $ q_1\equiv q_2$ in $\overline\Omega_R$.

 According to \eqref{5.17'}, one defines an operator  $T:	\mathcal{F}\to \mathcal{F}$, $ (g_1,g_2)\mapsto ( p,  q)=T(g_1,g_2)$.
	Obviously, the mapping $T$ is linear. Moreover, we notice that the solution $( p,  q)$ of \eqref{5.17'} has a global bound which depends only on $\Vert g_1\Vert_{L^\infty(\mathbb{R})}$ and $\Vert g_2\Vert_{L^\infty(\overline\Omega_R)}$. By regularity estimates, $( p,  q)=T(g_1, g_2)$ belongs to $C^{3}(\mathbb{R})\times C^{3}(\overline\Omega_R)$, whence $(p,q)\in\mathcal{F}$. Therefore, $T$ is compact.
	
	Let $K$ be the cone $K=\left\{(u,v)\in\mathcal{F}:u\ge0~\text{in}~\mathbb{R}, v\ge 0~\text{in}~\overline\Omega_R\right\}$. Its interior $K^\circ=\big\{(u,v)\in \mathcal{F}:u>0~\text{in}~\mathbb{R}, v>0~\text{in}~\mathbb{R}\times[0,R)\big\}\neq \emptyset $ (for instance, 
	 $(u,v(y))=(1,1-y/R)$ belongs to $K^\circ$) and $K\cap (-K)={(0,0)}$. By the analysis above,  $T(K^\circ)\subset K^\circ$ and $T$ is strongly positive in the sense that, if $(g_1, g_2)\in K\backslash \{(0,0)\}$, then $ p>0$ in $\mathbb{R}$ and $ q>0$ in $\mathbb{R}\times[0,R)$.  
	
	From the  classical Krein-Rutman theory, there exists a unique positive real number $\lambda^*_{R,\zeta}(\alpha)$ and a unique (up to multiplication by constants) function pair $( p,  q)\in K^\circ$ such that $\lambda^*_{R,\zeta}(\alpha)T( p,  q)=( p,  q)$. 
	The  principal eigenvalue $\lambda^*_{R,\zeta}(\alpha)$ depends on $R$, $\alpha$ and $\zeta$. Set $\lambda_{R,\zeta}(\alpha):=\lambda^*_{R,\zeta}(\alpha)-\Lambda$, then the function $\lambda_{R,\zeta}(\alpha)$ takes value in $\mathbb{R}$. For each $\alpha\in\mathbb{R}$, $( p, q)$ is the unique (up to multiplication by  constants) positive eigenfunction pair of \eqref{5.15/5.18'} associated with $\lambda_{R,\zeta}(\alpha)$.
	
	\textit{Step 2. Proof of formula \eqref{5.16'}.} We notice from Step 1 that $( p,  q)\in \varSigma$. It then follows that 
	\begin{equation*}
	\lambda_{R,\zeta}(\alpha)\le\sup_{( p, q)\in \varSigma}\min\bigg\{\inf_{\mathbb{R}}\frac{\mathcal{L}_{1,\alpha}( p, q)}{ p}, \inf_{\mathbb{R}\times[0,R)}\frac{\mathcal{L}_{2,\alpha}( p, q)}{ q}\bigg\}.
	\end{equation*}
	To show the reverse inequality, assume by contradiction that there exists $( p_1, q_1)\in \varSigma$ such that
	\begin{equation*}
	\lambda_{R,\zeta}(\alpha)<\min\bigg\{\inf_{\mathbb{R}}\frac{\mathcal{L}_{1,\alpha}( p_1, q_1)}{ p_1}, \inf_{\mathbb{R}\times[0,R)}\frac{\mathcal{L}_{2,\alpha}( p_1, q_1)}{ q_1}\bigg\}.
	\end{equation*}
	Define 
	\begin{equation*}
	\theta^*:=\min\big\{\theta>0:~ \theta( p_1, q_1)\ge ( p, q) \ \text{in} \ \mathbb{R}\times[0,R)\big\}.
	\end{equation*}
	Then, $\theta^*>0$ and $(\theta^* p_1- p, \theta^* q_1- q)$ is nonnegative and two cases may occur, namely, either at least one component attains zero somewhere in $\mathbb{R}\times[0,R)$,  or $\theta^* p_1- p>0$ in $\mathbb{R}$, $\theta^* q_1-q>0$ in $[0,R)$ and $\partial_y(\theta^* q_1-q)(x_0,R)=0$ for some $x_0\in\mathbb{R}$. Set $(w,z):=(\theta^* p_1- p, \theta^* q_1- q)$, then $(w,z)$ satisfies
	\begin{align}
	\label{4.10}
	\begin{cases}
	-Dw''+2D\alpha w'+(-D\alpha^2+\mu-\lambda_{R,\zeta}(\alpha)) w-\nu z(x,0)>0,  &x\in\mathbb{R},\\
	-d\Delta z+2d\alpha\partial_x z-(d\alpha^2+\zeta(x)+\lambda_{R,\zeta}(\alpha)) z>0,  &(x,y)\in\Omega_R,\\
	-d\partial_y z(x,0)+\nu z(x,0)-\mu w=0, \   &x\in\mathbb{R},\\
	z(x,R)=0, \   &x\in\mathbb{R},\\
	w,  z \ \text{are}~ L\text{-periodic with respect to} \ x.
	\end{cases}
	\end{align}
	For the first case, assume first that there is $(x_0,y_0)\in\mathbb{R}\times[0,R)$ such that $z(x_0,y_0)=0$. We divide into two subcases. Suppose that $(x_0,y_0)\in\Omega_R$, then the strong maximum principle implies that $z\equiv 0$ in $\Omega_R$. This contradicts the strict inequality of $z$ in \eqref{4.10}, whence $z>0$ in $\Omega_R$.  Suppose now that $y_0=0$ and $z(x_0,0)=0$, it follows that $\partial_y z(x_0,0)>0$. One then deduces from $-d\partial_y z(x_0,0)+\nu z(x_0,0)-\mu w(x_0)=0$ that $w(x_0)=-(d/\mu)\partial_y z(x_0,0)<0$, which is impossible since $w\ge 0$ in $\mathbb{R}$. Therefore, $z>0$ in $\mathbb{R}\times[0,R)$. It is seen from the first inequality of \eqref{4.10} that 
	\begin{equation*}
	-Dw''+2D\alpha w'+(-D\alpha^2+\mu-\lambda_{R,\zeta}(\alpha)) w> \nu z(\cdot,0)>0  ~~\text{in}~\mathbb{R}.
	\end{equation*}
	Finally, assume that there is $x_0\in\mathbb{R}$ such that $w(x_0)=0$, then the strong maximum principle implies that $w\equiv 0$ in $\mathbb{R}$, contradicting the strict inequality above.  Consequently, one has $w>0$ in $\mathbb{R}$ and $z>0$ in $\mathbb{R}\times[0,R)$. On the other hand, by Hopf lemma it follows that $\partial_y z(\cdot,R)<0$ in $\mathbb{R}$, whence the second case is ruled out. Therefore,
		\begin{equation*}
	\lambda_{R,\zeta}(\alpha)\ge \sup_{( p, q)\in \varSigma}\min\bigg\{\inf_{\mathbb{R}}\frac{\mathcal{L}_{1,\alpha}( p, q)}{ p}, \inf_{\mathbb{R}\times[0,R)}\frac{\mathcal{L}_{2,\alpha}( p, q)}{ q}\bigg\}.
	\end{equation*}
 Therefore, formula \eqref{5.16'} is proven and the supremum is indeed  maximum since \eqref{5.16'} is reached by the function pair $(p,q)\in\Sigma_\alpha$. Therefore, (i) is proved.
		\vspace{2mm}
		
 	\textit{Step 3. Monotonicity and continuity of the function $\zeta\mapsto\lambda_{R,\zeta}(\alpha)$ for all $\alpha\in\mathbb{R}$.}	For any fixed $R$, if $\zeta_1(x)\le \zeta_2(x)$ in $\mathbb{R}$, formula \eqref{5.16'} together with the definition of the operator $\mathcal{L}_{2,\alpha}$ immediately implies that  $\lambda_{\zeta_1}(\alpha)\ge\lambda_{\zeta_2}(\alpha)$ for all $\alpha\in\mathbb{R}$.   
 	
 	Assume now that $\zeta_n\to \zeta$ as $n\to +\infty$, we have to show that $\lambda_{R,\zeta_n}(\alpha)\to \lambda_{R,\zeta}(\alpha)$ as $n\to +\infty$. Let $(\lambda_{R,\zeta_n}(\alpha) ;(p_n,q_n))$ be the principal eigenpair of \eqref{5.15/5.18'} with $\zeta$ replaced by $\zeta_n$   satisfying the normalization $\Vert p_n\Vert_{L^{\infty}(\mathbb{R})}=1$. From Step 1, it is seen that $(p_n,q_n)$ belongs to $C^{3}(\mathbb{R})\times C^{3}(\overline\Omega_R)$. By elliptic estimates, up to extraction of some subsequence, $(p_n,q_n)$ converges as $n\to +\infty$ uniformly in $\overline\Omega_R$ to a positive function pair $(p,q)\in C^{3}(\mathbb{R})\times C^{3}(\overline\Omega_R)$ which satisfies \eqref{5.15/5.18'} associated with $\tilde \lambda_{R}(\alpha)$ with normalization $\Vert p\Vert_{L^{\infty}(\mathbb{R})}=1$. By the uniqueness of the principal eigenpair of \eqref{5.15/5.18'}, it follows that $\tilde\lambda_{R,\zeta}(\alpha)=\lambda_{R,\zeta}(\alpha)$. Namely, $\lambda_{R,\zeta_n}(\alpha)\to \lambda_{R,\zeta}(\alpha)$
    as $n\to+\infty$. This completes the proof of (ii).
	\vspace{2mm}
	
	\textit{Step 4. Monotonicity of the function $R\mapsto\lambda_{R,\zeta}(\alpha)$ for all $\alpha\in\mathbb{R}$.} Fix $\alpha\in\mathbb{R}$ and choose $R_1>R_2$.  Set $\lambda_1=\lambda_{R_1,\zeta}(\alpha)$ and $\lambda_2=\lambda_{R_2,\zeta}(\alpha)$ and let $(\lambda_1;(p_1,q_1))$ and $(\lambda_2;(p_2,q_2))$ be the  eigenpairs of \eqref{5.15/5.18'} in $\overline\Omega_{R_1}$ and in $\overline\Omega_{R_2}$, respectively. Define
	\begin{equation*}
	\theta^*:=\min\{\theta>0:~ \theta(p_1,q_1)\ge(p_2,q_2) \ \text{in} \ \overline\Omega_{R_2}\}.
	\end{equation*}
	Then, $\theta^*>0$ is well-defined. The function pair $(w,z):=( \theta^*p_1-p_2, \theta^*q_1-q_2)$ is nonnegative  and at least one component attains zero somewhere in $\mathbb{R}\times[0,R_2)$ by noticing that $q_1|_{y=R_2}>q_2|_{y=R_2}=0$. Moreover, $(w,z)$ satisfies
	\begin{align}
	\label{4.11}
	\begin{cases}
	-Dw''+2D\alpha w'+(-D\alpha^2+\mu) w-\nu z(x,0)=\theta^*\lambda_1p_1-\lambda_2p_2,  &x\in\mathbb{R},\\
	-d\Delta z+2d\alpha\partial_x z-(d\alpha^2+\zeta(x)) z=\theta^*\lambda_1q_1-\lambda_2q_2,  &(x,y)\in\Omega_{R_2},\\
	-d\partial_y z(x,0)+\nu z(x,0)-\mu w=0, \   &x\in\mathbb{R},\\
	z(x,R_2)>0, \   &x\in\mathbb{R},\\
	w,  z \ \text{are}~ L\text{-periodic with respect to} \ x.
	\end{cases}
	\end{align}
	Assume that there is $x_0\in\mathbb{R}$ such that $w(x_0)=0$,  it follows from the first equation in \eqref{4.11} that 
	\begin{align*}
-Dw''(x_0)+2D\alpha w'(x_0)+(-D\alpha^2+\mu) w(x_0)-\nu z(x_0,0)=(\lambda_1-\lambda_2)p_2(x_0),
	\end{align*}
	Since the  function $w$ attains its minimum at $x_0$, one has $w'(x_0)=0$ and  $w''(x_0)\ge 0$, whence $(\lambda_1-\lambda_2)p_2(x_0)\le -\nu z(x_0,0)\le 0$,
	therefore $\lambda_1\le\lambda_2$.
	Assume now that there is $ (x_0,y_0)\in\mathbb{R}\times[0,R_2)$ such that $z(x_0, y_0)=0$, we distinguish  two subcases. Suppose that $y_0\in(0,R)$,   a similar analysis of the second equation in \eqref{4.11} as above implies
	 that $\lambda_1\le\lambda_2$.
	Otherwise, $z>0$ in $\Omega_R$ and $z(x_0,0)=0$, which leads to $w(x_0)=-(d/\mu)\partial_y z(x_0,0)<0$. This contradicts $w\ge 0$ in $\mathbb{R}$. To sum up, one obtains $\lambda_1\le\lambda_2$. Moreover, $\lambda_1=\lambda_2$ is impossible, otherwise $(p_1,q_1)$ would be a positive multiple of $(p_2,q_2)$, which contradicts $q_1|_{y=R_2}>q_2|_{y=R_2}=0$. As a consequence, $\lambda_1<\lambda_2$, namely, the function $R\mapsto\lambda_{R,\zeta}$ is decreasing. The proof of (iii) is complete.
	\vspace{2mm}
	
	\textit{Step 5. The concavity of the function $\alpha\mapsto\lambda_{R,\zeta}(\alpha)$.}	
Let $(\lambda_{R,\zeta}(\alpha); (p,q))$ be the principal eigenpair of \eqref{5.15/5.18'}. 	With the change of functions  $( p,  q)=e^{\alpha x}( \Phi,\Psi)$ in formula \eqref{5.16'}, one has
	\begin{align*}
	\frac{\mathcal{L}_{1, \alpha}( p,  q)}{ p}=\frac{-D \Phi''-\nu\Psi(x,0)}{\Phi}+\mu,\quad
	\frac{\mathcal{L}_{2, \alpha}( p,  q)}{ q}=\frac{-d
		\Delta\Psi}{\Psi}-\zeta(x).
	\end{align*}
	Then, it is immediate to see that
	\begin{align*}
	\lambda_{R,\zeta}(\alpha)=\max_{(\Phi,\Psi)\in \Sigma'_\alpha}\min\bigg\{\inf_{\mathbb{R}}\frac{-D\Phi''-\nu\Psi(x,0)}{\Phi}+\mu,\  \inf_{\mathbb{R}\times[0,R)}\frac{-d
		\Delta\Psi}{\Psi}-\zeta(x)\bigg\},
	\end{align*}
	where $\Sigma'_\alpha:=\left\{(\Phi,\Psi)\in C^2(\mathbb{R})\times C^2(\overline\Omega_R):  ~  e^{\alpha x}(\Phi,\Psi)\in\Sigma_\alpha\right\}.$
	Let $\alpha_1$, $\alpha_2$ be real numbers and $t\in[0,1]$. Set $\alpha=t\alpha_1+(1-t)\alpha_2$. One  has to show that $\lambda_{R,\zeta}(\alpha)\ge t\lambda_{R,\zeta}(\alpha_1)+(1-t)\lambda_{R,\zeta}(\alpha_2)$. Let $(\Phi_1,\Psi_1)$ and $(\Phi_2,\Psi_2)$ be two arbitrarily chosen function  pairs in $\Sigma'_{\alpha_1}$ and $\Sigma'_{\alpha_2}$, respectively. Set $(w_1,z_1)=(\ln\Phi_1,\ln\Psi_1)$, $(w_2,z_2)=(\ln\Phi_2,\ln\Psi_2)$, $w=tw_1+(1-t)w_2$, $z=tz_1+(1-t)z_2$ and $(\Phi,\Psi)=(e^w, e^z)$. It follows that $(\Phi,\Psi)\in\Sigma'_\alpha$. Then, it is  obvious to see that
	\begin{equation*}
	\lambda_{R,\zeta}(\alpha)\ge\min\bigg\{\inf_\mathbb{R}\frac{-D\Phi''-\nu\Psi(x,0)}{\Phi}+\mu, \inf_{\mathbb{R}\times[0,R)}\frac{-d\Delta\Psi}{\Psi}-\zeta(x)\bigg\}.
	\end{equation*}
	After some calculations, one has
	\begin{align*}
	&\frac{-D\Phi''-\nu\Psi(x,0)}{\Phi}=-Dw''-Dw'^2-\nu e^{z(x,0)-w(x)},\ \ \ 
	\frac{-d\Delta\Psi}{\Psi}=-d\Delta z-d\nabla z\cdot\nabla z.
	\end{align*}
	%此部分以下详细计算步骤暂时省略。
%	Notice that 
%	\begin{equation*}
%	-\nu e^{z(x,0)-w(x)}\ge t\big(-\nu e^{z_1(x,0)-w_1}\big)+(1-t)\big(-\nu e^{z_2(x,0)-w_2}\big),
%	\end{equation*}
%	because $e^x$ is convex with respect to $x$ and $\nu>0$.
%	Moreover,
%	\begin{equation*}
%	w'w'=tw'_1w'_1+(1-t)w'_2w'_2-t(1-t)(w'_1-w'_2)^2\le tw'_1w'_1+(1-t)w'_2w'_2,
%	\end{equation*}
%	\begin{equation*}
%	\nabla z\cdot\nabla z=t\nabla z_1\cdot\nabla z_1+(1-t)\nabla z_2\cdot\nabla z_2-t(1-t)(\nabla z_1-\nabla z_2)^2\le t\nabla z_1\cdot\nabla z_1+(1-t)\nabla z_2\cdot\nabla z_2,
%	\end{equation*}
%	because $t(1-t)\ge 0$. Hence,
Noticing that $x\mapsto e^x$ is convex, $\nu>0$ and $t(1-t)\ge 0$, it follows that
	\begin{align*}
	\frac{-D\Phi''-\nu\Psi(x,0)}{\Phi}+\mu&\ge t\big(-Dw''_1-Dw'^2_1-\nu e^{z_1(x,0)-w_1}\big)\cr &~~~~~~~~~~~~~~~~~~~~~~~~~~~~~~+(1-t)\big(-Dw''_2-Dw'^2_2-\nu e^{z_2(x,0)-w_2}\big)+\mu\cr
	&\ge t\bigg(	\frac{-D\Phi''_1-\nu\Psi_1(x,0)}{\Phi_1}+\mu\bigg)+(1-t)\bigg(	\frac{-D\Phi''_2-\nu\Psi_2(x,0)}{\Phi_2}+\mu\bigg).
	\end{align*}
	Similarly, 
	\begin{align*}
	\frac{-d\Delta\Psi}{\Psi}-\zeta(x)\ge t\bigg(\frac{-d\Delta\Psi_1}{\Psi_1}-\zeta(x)\bigg)+(1-t)\bigg(\frac{-d\Delta\Psi_2}{\Psi_2}-\zeta(x)\bigg).
	\end{align*}
	Therefore,  
	\begin{align*}
	\lambda_{R,\zeta}(\alpha)\ge   &t\min\bigg\{
	\inf \frac{-D\Phi''_1-\nu\Psi_1(x,0)}{\Phi_1}+\mu, \inf \frac{-d\Delta\Psi_1}{\Psi_1}-\zeta(x)
	\bigg\}\\
	&\qquad +(1-t)\min\bigg\{\inf \frac{-D\Phi''_2-\nu\Psi_2(x,0)}{\Phi_2}+\mu, \inf \frac{-d\Delta\Psi_2}{\Psi_2}-\zeta(x)\bigg\}.
	\end{align*}
	Since $(\Phi_1,\Psi_1)$ and $(\Phi_2,\Psi_2)$ were arbitrarily chosen, one concludes that $\lambda_{R,\zeta}(\alpha)\ge t\lambda_{R,\zeta}(\alpha_1)+(1-t)\lambda_{R,\zeta}(\alpha_2)$. That is, $\alpha\mapsto\lambda_{R,\zeta}(\alpha)$ is concave in $\mathbb{R}$ and then continuous in $\mathbb{R}$.
	
	\textit{Step 6. The upper and lower bounds \eqref{bound'} of $\lambda_{R,\zeta}(\alpha)$.}  From Step 1, it follows that $\lambda^*_{R,\zeta}(\alpha)$ is positive, whence it is immediate to see that $\lambda_{R,\zeta}(\alpha)=\lambda^*_{R,\zeta}(\alpha)-\Lambda_\zeta(\alpha)> -\Lambda_\zeta(\alpha)$, namely,
	\begin{align*}
	-\lambda_{R,\zeta}(\alpha)< \max\big\{D\alpha^2+\nu-\mu+{\mu\nu}/{d}, d(\alpha^2+1)+M\big\}.
	\end{align*}
	It suffices to show that 
	\begin{align*}
	-\lambda_{R,\zeta}(\alpha)>\max\bigg\{D\alpha^2-\mu, d\alpha^2+m-d\frac{\pi^2}{R^2}\bigg\}.
	\end{align*} 
  From Step 3 we have that $-\lambda_{R,\zeta}(\alpha)$ is non-decreasing with respect to $\zeta$ for all $\alpha\in\mathbb{R}$,  it then follows that $-\lambda_{R,\zeta}(\alpha)\ge -\lambda_{R,m}(\alpha)$ for all $\alpha\in\mathbb{R}$. We claim that
	\begin{align}
	\label{sub-bound}
	-\lambda_{R,m}(\alpha)>\max\bigg\{D\alpha^2-\mu, d\alpha^2+m-d\frac{\pi^2}{R^2}\bigg\}.
	\end{align} 
	Inspired from  \cite[Proposition 3.4]{GMZ2015}, we assume by contradiction that $-D\alpha^2+\mu-\lambda_{R,m}(\alpha)\le 0$.
 Denote by $\big(\lambda_{R,m}(\alpha),(\tilde p,\tilde q)\big)$ the  principal eigenpair of eigenvalue problem \eqref{5.15/5.18'}  with $\zeta$ replaced by $m$, then $\big(\lambda_{R,m}(\alpha),(\tilde p,\tilde q)\big)$ satisfies 
\begin{align}
\label{4.4'}
\begin{cases}
-D\tilde p''+2D\alpha \tilde p'+(-D\alpha^2+\mu)\tilde p-\nu \tilde q(x,0)=\lambda_{R,m}(\alpha) \tilde p,  &x\in\mathbb{R},\\
-d\Delta\tilde q+2d\alpha\partial_x \tilde q-(d\alpha^2+m)\tilde q=\lambda_{R,m}(\alpha) \tilde q,  &(x,y)\in\Omega_R,\\
-d\partial_y\tilde q(x,0)+\nu \tilde q(x,0)-\mu \tilde p=0, \   &x\in\mathbb{R},\\
 \tilde q(x,R)=0, \   &x\in\mathbb{R},\\
\tilde p,  \tilde q \ \text{are} \ L\text{-periodic with respect to} \ x.
\end{cases}
\end{align} 
Since $\tilde p$ satisfies 
	\begin{equation}\label{z'}
	-D\tilde p''+2D\alpha \tilde p'+\big(-D\alpha^2+\mu-\lambda_{R,m}(\alpha)\big)\tilde p=\nu \tilde q(\cdot,0)>0~~~\text{in}~\mathbb{R},
	\end{equation}
 one infers that any positive constant is a subsolution of \eqref{z'}. Since $\tilde p$ is $L$-periodic in $x$, one gets that $\tilde p$ is identically equal to its minimum and thus $\tilde p$ is a positive constant in $\mathbb{R}$. Then, $0<\nu \tilde q(\cdot,0)=(-D\alpha^2+\mu-\lambda_{R,m}(\alpha))\tilde p\le 0$ in $\mathbb{R}$. This is a contradiction. Therefore, $-D\alpha^2+\mu-\lambda_{R,m}(\alpha)> 0$.
	
	Next, we assume that $\lambda_{R,m}(\alpha)\ge-d\alpha^2-m+d\frac{\pi^2}{R^2}$. We denote $w_R=\frac{\pi}{R}$, then
	\begin{equation*}
	w:=\sqrt{\frac{d\alpha^2+m+\lambda_{R,m}(\alpha)}{d}}\ge w_R>0.
	\end{equation*}
	Integrating the second equation in \eqref{4.4'} with respect to $x$ over $[0,L]$, then $\Psi(y):=\int_0^L\tilde q(x,y)\mathrm{d}x$ satisfies $\Psi''(y)+w^2\Psi(y)=0$,
	with $\Psi(y)>0$ in $[0,R)$, $\Psi(R)=0$. One gets that $\Psi(\cdot)=C\sin(w(R-\cdot))$ in $[0,R]$ for some constant $C>0$. Since $w\ge w_R$, it is easy to see that $[0,R)$ contains at least a half period of $\Psi$, namely, $\Psi$ must attain a non-positive value in $[0,R)$, which is impossible. Therefore, $\lambda_{R,m}(\alpha)<-d\alpha^2-m+d\frac{\pi^2}{R^2}$, namely, \eqref{sub-bound} is proved.
This completes the proof of (iv).
\end{proof}

In what follows, we shall give the variational formulas for $c^*_{R,\pm}$ by linear operators approach. For simplicity of the notation, we write $\lambda_R(\alpha):=\lambda_{R,\zeta}(\alpha)$ in the sequel. We have:
\begin{proposition}
	\label{formula}
	Let $c^*_{R,+}$ and $c^*_{R,+}$ be the rightward and leftward asymptotic spreading speeds of $Q_1$. Then,
	\begin{equation*}
	c^*_{R,+}=\inf\limits_{\alpha>0}\frac{-\lambda_{R}(\alpha)}{\alpha},~~~c^*_{R,-}=\inf\limits_{\alpha>0}\frac{-\lambda_{R}(-\alpha)}{\alpha}.
	\end{equation*}
\end{proposition}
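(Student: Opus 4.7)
The plan is to derive the variational formulas by invoking the general framework of Liang and Zhao \cite{LZ2010} (in particular, the linear-determinacy/variational characterization of spreading speeds for monotone subhomogeneous semiflows satisfying (A1)--(A6)). Since the construction preceding the statement has already verified that $Q_1$ satisfies all of (A1)--(A6), the main task is to identify the principal eigenvalue of the associated linearized time-$1$ map and relate it to $\lambda_R(\alpha)$.

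First I would work with the linearized semiflow $L(t)$ generated by \eqref{linear pb in strip}. The key observation is that $Q_1[(u_0,v_0)] \le L(1)[(u_0,v_0)]$ for any $(u_0,v_0)\in\mathcal{K}$, which follows from the KPP hypothesis $f(x,v)\le f_v(x,0)v$ together with the comparison principle (Proposition~\ref{cp-strip}). Next, for each $\alpha\in\mathbb{R}$, I would introduce the conjugated semigroup $L_\alpha(t)$ from \eqref{modified linear pb in strip} acting on $L$-periodic pairs. By the parabolic smoothing and maximum principle arguments already used above, the time-$1$ map $L_\alpha(1)$ is a compact, strongly positive operator on the ordered Banach space of $L$-periodic pairs $(p,q)$ with $q(\cdot,R)=0$. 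Applying the Krein--Rutman theorem gives a principal eigenvalue $\rho(\alpha)>0$ with a positive periodic eigenfunction. Separating variables $(\tilde u,\tilde v)=e^{-\sigma t}(p(x),q(x,y))$ shows that $\rho(\alpha)=e^{-\lambda_R(\alpha)}$, where $\lambda_R(\alpha)$ is the principal eigenvalue of \eqref{5.15/5.18'} supplied by Proposition~\ref{principal eigenvalue in strip}, and the eigenfunction is precisely the pair $(p,q)$ constructed there.

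With this identification in hand, I would invoke the variational formula of \cite[Theorem~3.10]{LZ2010} (applied to the rightward and leftward cases separately) to get
\begin{equation*}
c^*_{R,+} \;=\; \inf_{\alpha>0}\frac{\log \rho(\alpha)}{\alpha} \;=\; \inf_{\alpha>0}\frac{-\lambda_R(\alpha)}{\alpha},
\qquad
c^*_{R,-} \;=\; \inf_{\alpha>0}\frac{\log \rho(-\alpha)}{\alpha} \;=\; \inf_{\alpha>0}\frac{-\lambda_R(-\alpha)}{\alpha}.
\end{equation*}
Here the upper bound $c^*_{R,\pm}\le \inf_{\alpha>0}(-\lambda_R(\pm\alpha)/\alpha)$ comes from comparing $Q_t$ with $L(t)$ and propagating exponential test-supersolutions of the form $e^{-\alpha(x\mp ct)}(p(x),q(x,y))$; these are time-global solutions of \eqref{linear pb in strip} precisely when $c = -\lambda_R(\pm\alpha)/\alpha$, and they dominate the nonlinear solution issued from a compactly supported datum, yielding the desired decay for $c$ strictly larger than this ratio.

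The matching lower bound is the more delicate step. Here I would rely on the constructive part of \cite{LZ2010}: using the subhomogeneity and strong monotonicity of $Q_1$ together with the principal eigenfunction pair $(p,q)$ from Proposition~\ref{principal eigenvalue in strip}, one builds, for each $c<-\lambda_R(\alpha)/\alpha$, a small compactly supported subsolution of $Q_1$ of the form $\varepsilon\,\chi_{[-N,N]}(x)(p(x),q(x,y))$ that, after iteration of $Q_1$, exceeds a fixed fraction of $(U_R,V_R)$ on intervals growing at speed $c$; combined with the global attraction to $(U_R,V_R)$ from Theorem~\ref{thm2.1}, this gives the lower bound. The main obstacle I anticipate is bookkeeping with the ordered Banach space $X$ whose positive cone has empty interior (which is why the refinement to $Y$ was introduced); one must check throughout that $(p,q)$ belongs to $\mathrm{Int}(Y^+)$ so that the strong-order arguments of \cite{LZ2010} apply, a fact that follows from $p>0$ on $\mathbb{R}$, $q>0$ on $\mathbb{R}\times[0,R)$, and the Hopf-lemma consequence $\partial_y q(\cdot,R)<0$, all established in Proposition~\ref{principal eigenvalue in strip}. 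Once these verifications are in place, the formulas for $c^*_{R,\pm}$ follow directly.
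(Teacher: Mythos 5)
Your identification $\rho(\alpha)=e^{-\lambda_R(\alpha)}$ and the upper bound via comparison of $Q_1$ with the linear map $L(1)$ are exactly the paper's opening moves, and invoking the Weinberger/Liang--Zhao machinery for the resulting variational formula is the right framework. (Minor: the paper cites Theorem~3.10 of \cite{LZ2007}, not \cite{LZ2010}, for this step, but that is cosmetic.)

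Where your plan departs from the paper is the lower bound, and as sketched it has a gap. You propose to build a compactly supported subsolution $\varepsilon\chi_{[-N,N]}(x)(p(x),q(x,y))$ of the \emph{nonlinear} map $Q_1$ from the principal eigenfunction pair $(p,q)$ of the \emph{unperturbed} linearized problem \eqref{5.15/5.18'}, and then iterate. But $(p,q)$ is an exact eigenfunction of the linearization at $0$ with $f_v(x,0)$; after multiplying by a small $\varepsilon$ the nonlinear reaction loses a fraction, $f(x,\varepsilon q)\ge(1-\delta(\varepsilon))f_v(x,0)\varepsilon q$ with $\delta(\varepsilon)\to 0$ but $\delta(\varepsilon)>0$. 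So the truncated eigenfunction is a subsolution only for the $\delta$-perturbed problem, and the spreading speed one extracts from it is $\inf_{\alpha>0}(-\lambda_{R}^{\varepsilon}(\alpha)/\alpha)$ with $\lambda_{R}^{\varepsilon}$ the principal eigenvalue of \eqref{5.15/5.18'} with $f_v(x,0)$ replaced by $(1-\varepsilon)f_v(x,0)$, which is \emph{strictly below} the claimed value $\inf_{\alpha>0}(-\lambda_{R}(\alpha)/\alpha)$. The paper handles this precisely: it compares $Q_1$ from below with the linear semiflow $\mathbb{L}^{\varepsilon}(1)$ generated by the $(1-\varepsilon)f_v(x,0)$-system on a sublevel set $\mathcal{K}_1$ of small data (this is where Proposition~\ref{prop3.8}, continuous dependence on initial data, enters), applies the Liang--Zhao lower-bound theorem to get $c^*_{R,+}\ge\inf_{\alpha>0}(-\lambda_R^{\varepsilon}(\alpha)/\alpha)$ for every $\varepsilon\in(0,1)$, and only then lets $\varepsilon\to0$ using the continuity of $\zeta\mapsto\lambda_{R,\zeta}(\alpha)$ established in Proposition~\ref{principal eigenvalue in strip}(ii). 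That limiting step is the crux and is absent from your outline; without it you do not recover the sharp constant, only a strict inequality. Your observation about $\mathrm{Int}(Y^+)$ and the Hopf-lemma positivity of $\partial_y q(\cdot,R)$ is correct and relevant for the abstract hypotheses, but it does not by itself close the gap: the eigenpair you need to feed into the lower-bound construction is the $\varepsilon$-perturbed one, not the pair associated with $\lambda_R(\alpha)$.

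Also note the paper disposes of $c^*_{R,-}$ by a reflection $x\mapsto -x$, reducing it to the rightward case; this is cleaner than running the whole argument twice and worth adopting.
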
 
\begin{proof} 
	Since $f(x,v)\le f_v(x,0)v$ for all $(x,y)\in\overline{\Omega}_R$ and $v\ge 0$, it  follows that, for any $(u_0,v_0)\in \mathcal{K}$, the solution $(u(t,\cdot;u_0), v(t,\cdot,\cdot;v_0))$ of \eqref{pb in strip} is a strict subsolution of \eqref{linear pb in strip} for all $t>0$ and $(x,y)\in\overline{\Omega}_R$. By a  comparison argument, it implies that $Q_t[(u_0,v_0)]\le L(t)[(u_0,v_0)]$ for all $t> 0$ and $(u_0,v_0)\in\mathcal{K}$. Letting $t=1$, we have $Q_1((u_0,v_0))\le L(1)[(u_0,v_0)]$ for every $(u_0,v_0)\in\mathcal{K}$.
	
    Define a linear operator $\mathbb{L}_\alpha$ on $\mathbb{P}=\{(u,v)\in\mathbb{C}^1_0: (u_0,v_0)~\text{is}~L\text{-periodic in}~x\}$ associated with $L(1)$ by 
	\begin{align*}
	\mathbb{L}_\alpha[(u_0,v_0)]:&=e^{\alpha x}\cdot  L(1)[e^{-\alpha x}(u_0,v_0)]\\
	&=e^{\alpha x}\cdot e^{-\alpha x}L_\alpha(1)[(u_0,v_0)]\\
	&=L_\alpha(1)[(u_0,v_0)]~~~~ \text{for every}~ (u_0, v_0)\in \mathbb{P}~\text{and}~(x,y)\in\overline\Omega_R.
	\end{align*}  
	It then follows that $\mathbb{L}_\alpha=L_\alpha(1)$, and hence, $e^{-\lambda_R(\alpha)}$ is the principal eigenvalue of $\mathbb{L}_\alpha$. Since the function $\alpha\mapsto \ln ( e^{-\lambda_R(\alpha)})=-\lambda_R(\alpha)$ is convex, using similar arguments as in \cite[Theorem 2.5]{Weinberger2002} and \cite[Theorem 3.10(i)]{LZ2007}, we obtain that
	\begin{equation}
	\label{7.8}
	c^*_{R,+}\le \inf\limits_{\alpha>0}\frac{\ln( e^{-\lambda_R(\alpha)})}{\alpha}=\inf\limits_{\alpha>0}\frac{-\lambda_R(\alpha)}{\alpha}.
	\end{equation}
	On the other hand, for any given $\varep\in(0,1)$, there exists $\delta>0$ such that $f(x,v)\ge (1-\varep)f_v(x,0)v$ for all $v\in[0,\delta]$ and $(x,y)\in\overline\Omega_R$.
	By the continuity of the solutions of \eqref{pb in strip} with respect to the initial conditions given in Proposition \ref{prop3.8}, there exists a $L$-periodic (in $x$) positive function pair $(u_1,v_1)\in \text{Int}(\mathbb{P}^+)$ satisfying $u_1\le U_R$ in $\mathbb{R}$ and $v_1\le V_R$ in $\overline\Omega_R$ such that $u(t,x;u_1)\le \nu\delta/\mu,v(t,x,y;v_1)\le \delta$ for all $t\in[0,1]$ and $(x,y)\in\overline\Omega_R$.
	By Proposition \ref{cp-strip}, one infers that, for all $(u_0,v_0)\in\mathcal{K}_1:=\{(u,v)\in C(\mathbb{R})\times C(\overline\Omega_R): (0,0)\le (u,v)\le (u_1,v_1)~\text{in}~\overline\Omega_R\}$,
	$$u(t,\cdot;u_0)\le u(t,\cdot;u_1)\le \nu\delta/\mu~~ \text{for all}~ t\in[0,1]~\text{and}~ x\in\mathbb{R},~~~~~$$
	$$v(t,\cdot,\cdot;v_0)\le v(t,\cdot,\cdot;v_1)\le \delta~~ \text{for all}~ t\in[0,1]~\text{and}~ (x,y)\in\overline\Omega_R.$$
 Thus, for any $(u_0,v_0)\in\mathcal{K}_1$, the solution $(u(t,\cdot;u_0), v(t,\cdot,\cdot; v_0))$	of \eqref{pb in strip} satisfies
 \begin{align*}
 %\label{7.9}
 \begin{cases}
 u_t-Du_{xx}=\nu v(t,x,0)-\mu u,\quad &t\in[0,1],\ x\in\mathbb{R}, \\
 v_t-d\Delta v\ge (1-\varep)f_v(x,0)v,\quad &t\in[0,1],\ (x,y)\in\Omega_R, \\
 -d\partial_y v(t,x,0)=\mu u-\nu v(t,x,0),\quad &t\in[0,1],\ x\in\mathbb{R}, \\
 v(t,x,R)=0,\quad &t\in[0,1],\ x\in\mathbb{R}.
 \end{cases}
 \end{align*}

Let $\{\mathbb{L}^\varep(t)\}_{t\ge 0}$ be the solution semigroup generated by the following linear system:
\begin{align*}
%\label{7.10}
\begin{cases}
 u_t-Du_{xx}=\nu v(t,x,0)-\mu u,\quad &t>0,\ x\in\mathbb{R}, \\
v_t-d\Delta v= (1-\varep)f_v(x,0)v,\quad &t>0,\ (x,y)\in\Omega_R, \\
-d\partial_y v(t,x,0)=\mu u-\nu v(t,x,0),\quad &t>0,\ x\in\mathbb{R}, \\
v(t,x,R)=0,\quad &t>0,\ x\in\mathbb{R}.
\end{cases}
\end{align*}
Then, Proposition \ref{cp-strip} implies that $\mathbb{L}^\varep(t)[(u_0,v_0)]\le Q_t[(u_0,v_0)]$ for all $t\in[0,1]$ and  $(u_0,v_0)\in \mathcal{K}_1$. In particular, $\mathbb{L}^\varep(1)[(u_0,v_0)]\le Q_1[(u_0,v_0)]$ for all  $(u_0,v_0)\in \mathcal{K}_1$.
	
	Let $\lambda^\varep_R(\alpha)$ be the  principal eigenvalue of the eigenvalue problem \eqref{5.15/5.18'} with $f_v(x,0)$ replaced by $(1-\varep)f_v(x,0)$. As argued above, the concavity of $\lambda^\varep_R(\alpha)$ and  similar arguments as in \cite[Theorem 2.4]{Weinberger2002} and \cite[Theorem 3.10(ii)]{LZ2007} give rise to 
	\begin{equation}
	\label{7.11}
	c^*_{R,+}\ge \inf\limits_{\alpha>0}\frac{\ln( e^{-\lambda^\varep_R(\alpha)})}{\alpha}=\inf\limits_{\alpha>0}\frac{-\lambda^\varep_R(\alpha)}{\alpha}~~\text{for all}~ \varep\in (0,1).
	\end{equation}
	Combining \eqref{7.8} and \eqref{7.11}, we obtain
	\begin{equation*}
	\inf\limits_{\alpha>0}\frac{-\lambda^\varep_R(\alpha)}{\alpha}\le c^*_{R,+}\le \inf\limits_{\alpha>0}\frac{-\lambda_R(\alpha)}{\alpha}~~\text{for all}~ \varep\in (0,1).
	\end{equation*}
Letting $\varep\to 0$, thanks to the continuity of the function  $\zeta\mapsto\lambda_{R,\zeta}(\alpha)$ in Proposition \ref{principal eigenvalue in strip} (ii), we then have $$c^*_{R,+}=\inf\limits_{\alpha>0}\frac{-\lambda_R(\alpha)}{\alpha}.$$

By change of variables $\hat u(t,x):=u(t,-x)$ and $\hat v(t,x,y):=v(t,-x,y)$, it follows that $c^*_{R,-}$ is the rightward asymptotic spreading speed of the resulting system for $(\hat u,\hat v)$. From the lines as above, it can be derived that $$c^*_{R,-}=\inf_{\alpha>0}\frac{-\lambda_R(-\alpha)}{\alpha}.$$ The proposition is therefore proved.
\end{proof}
 
\begin{lemma}\label{lemma-same speed}
	$c^*_{R,+}=c^*_{R,-}>0.$
\end{lemma}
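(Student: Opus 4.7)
The plan is in two pieces: first show $\lambda_R(\alpha)=\lambda_R(-\alpha)$ for every $\alpha\in\mathbb{R}$, which together with Proposition \ref{formula} immediately yields $c^*_{R,+}=c^*_{R,-}$; then deduce positivity of $c^*_{R,+}$ from the lower bound in Proposition \ref{principal eigenvalue in strip}(iv).

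\textbf{Step 1: symmetry of the principal eigenvalue.} I introduce the Hilbert space $H$ of $L$-periodic (in $x$) pairs $(p,q)$ with $q(\cdot,R)=0$, equipped with the weighted inner product
$$\langle (p_1,q_1),(p_2,q_2)\rangle:=\mu\int_0^L p_1p_2\,dx+\nu\int_0^L\!\!\int_0^R q_1q_2\,dy\,dx,$$
and view $\mathcal{L}_\alpha=(\mathcal{L}_{1,\alpha},\mathcal{L}_{2,\alpha})$ from \eqref{5.15/5.18'} as an unbounded operator on $H$ whose domain consists of smooth pairs additionally satisfying the Robin coupling $\mathcal{B}(p,q)=0$. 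Two rounds of integration by parts in $x$ (killed by $L$-periodicity) and Green's identity in $y$ (boundary at $y=R$ killed by the Dirichlet condition) transform $\langle\mathcal{L}_\alpha(p,q),(\tilde p,\tilde q)\rangle$ into $\langle(p,q),\mathcal{L}_{-\alpha}(\tilde p,\tilde q)\rangle$ plus a boundary term on $\{y=0\}$. The crucial calculation is that after using the coupling $-d\partial_y q(\cdot,0)=\mu p-\nu q(\cdot,0)$ to eliminate $\partial_y q(\cdot,0)$ from this boundary contribution, the residual reduces exactly to
$$\nu\int_0^L q(x,0)\bigl[-d\partial_y\tilde q(x,0)+\nu\tilde q(x,0)-\mu\tilde p(x)\bigr]\,dx=\nu\int_0^L q(x,0)\,\mathcal{B}(\tilde p,\tilde q)\,dx,$$
which vanishes precisely when $(\tilde p,\tilde q)$ lies in the same domain as $(p,q)$. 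The weights $\mu,\nu$ in the inner product are forced by this cancellation, and the sole effect on the bulk operators is the sign flip $2D\alpha p'\mapsto-2D\alpha\tilde p'$ and $2d\alpha\partial_xq\mapsto-2d\alpha\partial_x\tilde q$.

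Thus $\mathcal{L}_\alpha^*=\mathcal{L}_{-\alpha}$ on $H$. The resolvent $T_\alpha=(\mathcal{L}_\alpha+\Lambda)^{-1}$ constructed in Step 1 of the proof of Proposition \ref{principal eigenvalue in strip} is compact and strongly positive, so its top eigenvalue $1/(\lambda_R(\alpha)+\Lambda)$ is simple and coincides with the top eigenvalue of its Hilbert-space adjoint $T_\alpha^*=T_{-\alpha}$. Hence $\lambda_R(\alpha)=\lambda_R(-\alpha)$ for all $\alpha\in\mathbb{R}$, and Proposition \ref{formula} gives
$$c^*_{R,+}=\inf_{\alpha>0}\frac{-\lambda_R(\alpha)}{\alpha}=\inf_{\alpha>0}\frac{-\lambda_R(-\alpha)}{\alpha}=c^*_{R,-}.$$

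\textbf{Step 2: positivity.} By Proposition \ref{principal eigenvalue in strip}(iv), $-\lambda_R(\alpha)>d\alpha^2+m-d\pi^2/R^2$ for every $\alpha\in\mathbb{R}$. The standing convention $R>R_0=2R_*$ with $m=d\pi^2/R_0^2$ forces $m-d\pi^2/R^2>0$. By AM--GM,
$$\frac{-\lambda_R(\alpha)}{\alpha}>d\alpha+\frac{m-d\pi^2/R^2}{\alpha}\ge 2\sqrt{d\bigl(m-d\pi^2/R^2\bigr)}>0,$$
so $c^*_{R,+}\ge 2\sqrt{d(m-d\pi^2/R^2)}>0$.

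\textbf{Main obstacle.} The crux is the duality computation in Step 1: because the coupling between $p$ and $q$ is through a nonstandard Robin-type boundary condition at $y=0$ mixing the two components, one must discover the correct weighted inner product---with the specific weights $\mu$ and $\nu$---in order to force the boundary contributions to collapse to a multiple of $\mathcal{B}(\tilde p,\tilde q)$, identifying $\mathcal{L}_\alpha^*$ with $\mathcal{L}_{-\alpha}$ on the common domain. Once that weighting is in hand, the remainder is routine Krein--Rutman and AM--GM bookkeeping.
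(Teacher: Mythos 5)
Your proposal is correct, and when unpacked, Step 1 amounts to the same computation as the paper's, packaged at a different level of abstraction. The paper does not introduce the weighted Hilbert space explicitly: it multiplies the $p$-equation of the $\alpha$-eigenpair by the $\phi$-eigenfunction of the $(-\alpha)$-pair (and vice versa), integrates over $(0,L)$, and separately multiplies the $q$-equations by $\psi$ and $q$ and integrates over $(0,L)\times(0,R)$, obtaining
\begin{align*}
\big[\lambda_R(\alpha)-\lambda_R(-\alpha)\big]\int_0^L p\phi\,dx&=-\nu\int_0^L\!\big(q(x,0)\phi-\psi(x,0)p\big)\,dx,\\
\big[\lambda_R(\alpha)-\lambda_R(-\alpha)\big]\int_S q\psi\,dx\,dy&=\mu\int_0^L\!\big(q(x,0)\phi-\psi(x,0)p\big)\,dx,
\end{align*}
whose opposite signs force $\lambda_R(\alpha)=\lambda_R(-\alpha)$. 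The $\mu$- and $\nu$-weights in your inner product are precisely the coefficients that would make the linear combination of these two identities collapse, so your discovery that $\mathcal{L}_\alpha^*=\mathcal{L}_{-\alpha}$ in the $(\mu,\nu)$-weighted space is the same structural fact. Your route buys a cleaner conceptual statement; the paper's direct pairing avoids one functional-analysis caveat in yours, namely the phrase ``strongly positive on $H$'': the positive cone in an $L^2$-type space has empty interior, so strong positivity is not the right notion there. You do not actually need it --- all you use is that $\sigma(T_\alpha)\setminus\{0\}$ and $\sigma(T_{\alpha}^*)\setminus\{0\}=\sigma(T_{-\alpha})\setminus\{0\}$ coincide up to conjugation and that the principal eigenvalue (known to be real and simple from Proposition~\ref{principal eigenvalue in strip}, with eigenfunctions smooth by elliptic regularity, so the $\mathcal F$- and $H$-spectra agree) equals the spectral radius --- so I'd strike that phrase. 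Finally, your Step~2 via AM--GM gives the explicit bound $c^*_R\ge 2\sqrt{d(m-d\pi^2/R^2)}$, which is a slight sharpening of the paper's argument (the paper deduces $-\lambda_R(\alpha)>0$ from convexity and evenness with $-\lambda_R(0)>0$ and then asserts positivity of the infimum, implicitly using the growth of the lower bound \eqref{bound'} at $\alpha\to0^+$ and $\alpha\to\infty$); the two arguments are equivalent, yours is marginally more self-contained.
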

\begin{proof}
 We first prove that	$c^*_{R,+}=c^*_{R,-}$. By virtue of the variational formulas obtained above, it is enough to show  $\lambda_R(\alpha)=\lambda_R(-\alpha)$. 
	Let $(\lambda_R(\alpha);(p,q))$ be the principal eigenpair of the eigenvalue problem \eqref{5.15/5.18'}, namely,
	\begin{align}
	\label{right}
	\begin{cases}
	-Dp''+2D\alpha p'+(-D\alpha^2+\mu) p-\nu q(x,0)=\lambda_R(\alpha) p,  &x\in\mathbb{R},\\
	-d\Delta q+2d\alpha\partial_x q-(d\alpha^2+f_v(x,0)) q=\lambda_R(\alpha) q,  &(x,y)\in\Omega_R,\\
	-d\partial_y q(x,0)+\nu q(x,0)-\mu p=0, \   &x\in\mathbb{R},\\
	q(x,R)=0, \   &x\in\mathbb{R},\\
	p,  q \ \text{are} \ L\text{-periodic with respect to} \ x,
	\end{cases}
	\end{align}
	and let $(\lambda_R(-\alpha);(\phi,\psi))$ be the principal eigenpair of the eigenvalue problem \eqref{5.15/5.18'}, that is,
		\begin{align}
	\label{left}
	\begin{cases}
	-D\phi''-2D\alpha \phi'+(-D\alpha^2+\mu) \phi-\nu \psi(x,0)=\lambda_R(-\alpha) \phi,  &x\in\mathbb{R},\\
    -d\Delta \psi-2d\alpha\partial_x \psi-(d\alpha^2+f_v(x,0)) \psi=\lambda_R(-\alpha) \psi,  &(x,y)\in\Omega_R,\\
	-d\partial_y \psi(x,0)+\nu \psi(x,0)-\mu \phi=0, \   &x\in\mathbb{R},\\
	\psi(x,R)=0, \   &x\in\mathbb{R},\\
	\phi,  \psi \ \text{are} \ L\text{-periodic with respect to} \ x.
	\end{cases}
	\end{align}
	We multiply the first equations in \eqref{right}  and  in \eqref{left} by  $\phi$ and $p$, repectively, then we integrate the two resulting equations over $(0,L)$. By  subtraction, it follows that
	\begin{equation*}
	\big[\lambda_R(\alpha)-\lambda_R(-\alpha)\big]\int_0^L p\phi \mathrm{d}x=-\nu\int_0^L\big(q(x,0)\phi-\psi(x,0)p\big) \mathrm{d}x.
	\end{equation*}
	Similarly, we multiply the second equations in \eqref{right}  and in \eqref{left} by $\psi$ and $q$, respectively. By subtracting the integration of the two resulting equations over $S=(0,L)\times (0,R)$, one gets
	\begin{equation*}
	\big[\lambda_R(\alpha)-\lambda_R(-\alpha)\big]\int_S q\psi \mathrm{d}x \mathrm{d}y=\mu\int_0^L\big(q(x,0)\phi-\psi(x,0)p\big)\mathrm{d}x.
	\end{equation*}
	Therefore, by using the positivity of $(p,q)$ and $(\phi,\psi)$, one has
	\begin{equation*}
	\text{sgn}\big(\lambda_R(\alpha)-\lambda_R(-\alpha)\big)=\text{sgn} \Big(\int_0^L\big(q(x,0)\phi-\psi(x,0)p\big)\mathrm{d}x\Big) =-\text{sgn}\Big(\int_0^L\big(q(x,0)\phi-\psi(x,0)p\big)\mathrm{d}x\Big),
	\end{equation*}
	which implies that $\lambda_R(\alpha)=\lambda_R(-\alpha)$. Consequently, $c^*_{R,+}=c^*_{R,-}$.
	
 From $\lambda_R(\alpha)=\lambda_R(-\alpha)$ and from Proposition \ref{principal eigenvalue in strip} (iv), it is seen that the function $\alpha\mapsto-\lambda_R(\alpha)$ is convex and even in $\mathbb{R}$ and $-\lambda_R(0)\ge m-d\pi^2/(R^2)>0$. Thus, $-\lambda_R(\alpha)>0$ for all $\alpha\in\mathbb{R}$, whence $c^*_{R,+}=c^*_{R,-}> 0$.
\end{proof}

 \begin{proof}[Proofs of Theorems \ref{thm-asp-strip} and \ref{thm-PTF-in strip}] By Theorems 3.4, 4.3 and 4.4 in \cite{LZ2010}, as well as Lemma \ref{lemma-same speed} above, one derives the conclusion of Theorem \ref{thm-asp-strip} with spreading speed $c^*_R$, as well as   the existence of the non-increasing in $s$ rightward and non-decreasing in $s$ leftward periodic traveling waves
 for problem \eqref{pb in strip} with minimal wave speed $c^*_R$. To complete the proof of Theorem \ref{thm-PTF-in strip}, it remains to show that these periodic traveling fronts are strictly monotone in $s$. For $c\ge c^*_R$, consider a periodic rightward traveling front of \eqref{pb in strip} (the case of leftward waves can be dealt with similarly), written as $(\phi_R(s,x),\psi_R(s,x,y))=(u(\frac{x-s}{c},x),v(\frac{x-s}{c},x,y))$ for all $s\in\mathbb{R}$ and $(x,y)\in\overline\Omega_R$. Notice that $(u(t,x),v(t,x,y))$  satisfies \eqref{pb in strip} and \eqref{periodicity-strip}, and is defined for all $t\in\mathbb{R}$ and $(x,y)\in\overline\Omega_R$. Since $c\ge c^*_R>0$, the function pair $(u,v)$  is non-decreasing in $t\in\mathbb{R}$. Then, for any $\tau>0$, $w(t,x)=u(t+\tau,x)-u(t,x)\ge 0$, $z(t,x,y)=v(t+\tau,x,y)-v(t,x,y)\ge 0$  for all $t\in\mathbb{R}$ and $(x,y)\in\overline\Omega_R$. The function pair $(w,z)$ is a classical solution to a linear problem in $\mathbb{R}\times\overline\Omega_R$.
 The strong parabolic maximum principle and the Hopf lemma, as well as the uniqueness of the corresponding Cauchy problem then imply that either $(w,z)$ is indentically $(0,0)$ or positive everywhere in $\mathbb{R}\times[0,R)$. If $(w,z)\equiv(0,0)$, then $(\phi_R(s-c\tau,x),\psi_R(s-c\tau,x,y))=(\phi_R(s,x),\psi_R(s,x,y))$ for all $s\in\mathbb{R}$ and $(x,y)\in\overline\Omega_R$, which contradicts the limit condition \eqref{limit-strip} as $s\to\pm\infty$ due to $c\tau>0$. Therefore, $w>0$ in $\mathbb{R}$ and $z>0$ in $\mathbb{R}\times[0,R)$ for any $\tau>0$. Hence, $(\phi_R(s,x),\psi_R(s,x,y))$ is decreasing in $s$. This completes the proof. 
\end{proof}

%%%%%%%%%%%%%%%%%%%%%%%%%%%%%
%%%%%%%%%%%%%%%%%%%%%%%%%5
%%%%%%%%%%%%%%%%%%%%%%%%%%%%%%
%%%%%%%%%%%%%%%%%%%%%%%%%%%%%
%%%%%%%%%%%%%%%%%%%%%%%%%%%%%%
%%%%%%%%%%%%%%%%%%%%%%%%%%%%%%%
\section{Propagation properties in the half-plane: Proofs of Theorems \ref{thm-asp-half-plane} and \ref{PTF-in half-plane}}
\label{section5}
This section is devoted to  propagation properties for problem \eqref{pb in half-plane} in the half-plane. We only sketch the detailed proof in the right direction along the road, since the discussion in the left direction can be handled similarly. 

\subsection{The generalized eigenvalue problem in the half-plane}\label{generalized p-e}
Recall from Proposition \ref{principal eigenvalue in strip} that
\begin{equation}
\label{bound in strip}
\max\Big\{D\alpha^2-\mu, d\alpha^2+m-d\frac{\pi^2}{R^2}\Big\}<-\lambda_R(\alpha)<
\max\Big\{D\alpha^2+\nu-\mu+\frac{\mu\nu}{d}, d(\alpha^2+1)+M\Big\},
\end{equation} 
and the function $R\mapsto -\lambda_R(\alpha)$ is increasing. For any fixed $\alpha\in\mathbb{R}$, we can take the limit as follows:
\begin{equation}
\label{principal-e}
\lambda(\alpha):=\lim\limits_{R\to+\infty} \lambda_R(\alpha).
\end{equation}
 It can be deduced from \eqref{bound in strip} that
\begin{equation}
\label{bound in half-plane}
\max\Big\{D\alpha^2-\mu, d\alpha^2+m\Big\}\le -\lambda(\alpha)\le 
\max\Big\{D\alpha^2+\nu-\mu+\frac{\mu\nu}{d}, d(\alpha^2+1)+M\Big\}.
\end{equation}
 Since the function $\alpha\mapsto -\lambda_R(\alpha)$ is convex and continuous in $\mathbb{R}$ and since the pointwise limit of a convex function is still convex, it follows  that the function $\alpha\mapsto -\lambda(\alpha)$ is convex and continuous in $\mathbb{R}$. Furthermore, we have:
\begin{theorem}
	\label{thm 5.1}
	For any $\alpha\in\mathbb{R}$, let  $\lambda(\alpha)$ be defined by \eqref{principal-e}. Then there exists a positive $L$-periodic (in $x$) function pair  $(P_\alpha(x), Q_\alpha(x,y))$ associated with $\varLambda=\lambda(\alpha)$ satisfying
	\begin{align}
	\label{eigenvalue pb in half-plane}
	\begin{cases}
	-DP_\alpha''+2D\alpha P_\alpha'+(-D\alpha^2+\mu) P_\alpha-\nu Q_\alpha(x,0)=\varLambda P_\alpha,  &x\in\mathbb{R},\\
	-d\Delta Q_\alpha+2d\alpha\partial_x Q_\alpha-(d\alpha^2+f_v(x,0)) Q_\alpha=\varLambda Q_\alpha,  &(x,y)\in\Omega,\\
	-d\partial_y Q_\alpha(x,0)+\nu Q_\alpha(x,0)-\mu P_\alpha=0, \   &x\in\mathbb{R},\\
	 P_\alpha,  Q_\alpha \ \text{are positive and} \ L\text{-periodic with respect to} \ x,
	\end{cases}
	\end{align}
	and such that, up to some normalization,
	\begin{equation*}
	P_\alpha\le 1~\text{in}~\mathbb{R},~~Q_\alpha~\text{is locally bounded in}~\overline\Omega.
	\end{equation*}
	We call $\lambda(\alpha)$ the generalized principal eigenvalue of \eqref{eigenvalue pb in half-plane} and  $(P_\alpha, Q_\alpha)$ the
generalized principal eigenfunction pair  associated with $\lambda(\alpha)$.
Moreover,  problem \eqref{eigenvalue pb in half-plane} admits no positive and $L$-periodic (in $x$) eigenfunction pair for any $\varLambda>\lambda(\alpha)$.
\end{theorem}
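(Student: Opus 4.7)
The plan is to obtain $(P_\alpha,Q_\alpha)$ as a subsequential $C^2_{\rm loc}$-limit of the truncated principal eigenfunction pairs $(P_{\alpha,R},Q_{\alpha,R})$ from Proposition \ref{principal eigenvalue in strip}, and then to rule out positive $L$-periodic eigenpairs for $\varLambda>\lambda(\alpha)$ by a sliding comparison against those same truncated eigenpairs for $R$ sufficiently large. The existence of $\lambda(\alpha)\in\mathbb{R}$ together with the bounds \eqref{bound in half-plane} follows at once from the monotonicity in $R$ (Proposition \ref{principal eigenvalue in strip}(iii)) and \eqref{bound in strip}, so only the construction of the eigenfunction pair and the non-existence statement remain.

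For the construction, I renormalize each $(P_{\alpha,R},Q_{\alpha,R})$ (which is defined up to a positive multiplicative constant) so that
\[
\|P_{\alpha,R}\|_{L^\infty(\mathbb{R})}+\|Q_{\alpha,R}(\cdot,0)\|_{L^\infty(\mathbb{R})}=1,
\]
a balanced choice designed so that neither component can degenerate in the limit. The main obstacle---the one the introduction flags as a key technical difficulty---is producing a uniform-in-$R$ local $L^\infty$ bound for $Q_{\alpha,R}$ on compact subsets of $\overline\Omega$. Since $Q_{\alpha,R}$ is a positive, $L$-periodic (in $x$) classical solution of a linear elliptic equation whose coefficients are $R$-uniformly bounded (via \eqref{bound in strip}), and since the Robin datum $\mu P_{\alpha,R}$ is bounded by the normalization, I plan to obtain such a bound by combining boundary Schauder estimates up to $\{y=0\}$ with a finite Harnack chain in the interior that propagates the resulting bounds up to any prescribed height $y=A$, the Harnack constant depending only on $A$ and the ambient coefficient bounds, in particular not on $R$ provided $R\gg A$. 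A second pass of interior and boundary Schauder then upgrades these to uniform $C^{2,\delta}_{\rm loc}$ bounds, and the ODE for $P_{\alpha,R}$ with bounded source $\nu Q_{\alpha,R}(\cdot,0)$ yields the analogous bounds for $P_{\alpha,R}$.

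Arzel\`a--Ascoli then produces a subsequence $R_n\to+\infty$ along which $(P_{\alpha,R_n},Q_{\alpha,R_n})\to(P_\alpha,Q_\alpha)$ in $C^2_{\rm loc}(\overline\Omega)$. Passing to the limit in \eqref{eigen-pb-strip} using $\lambda_{R_n}(\alpha)\to\lambda(\alpha)$, the pair $(P_\alpha,Q_\alpha)$ is nonnegative, $L$-periodic in $x$, and solves \eqref{eigenvalue pb in half-plane} with $\varLambda=\lambda(\alpha)$. The normalization identity is continuous under uniform convergence on the compact $[0,L]$, hence passes to the limit, so at least one of $P_\alpha$, $Q_\alpha(\cdot,0)$ is nontrivial. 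The strong maximum principle applied to the $Q_\alpha$-equation together with the Hopf lemma at $y=0$---a boundary zero of $Q_\alpha$ would force $P_\alpha<0$ there through the Robin BC, contradicting $P_\alpha\ge 0$---yields $Q_\alpha>0$ throughout $\overline\Omega$; a direct minimum-point inspection of the $P_\alpha$-ODE with strictly positive source $\nu Q_\alpha(\cdot,0)$ then forces $P_\alpha>0$ on $\mathbb{R}$. A final rescaling by $\|P_\alpha\|_{L^\infty(\mathbb{R})}$ places the pair in the normalized form required by the statement.

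For the non-existence claim, suppose $(\tilde P,\tilde Q)$ is a positive, $L$-periodic eigenpair of \eqref{eigenvalue pb in half-plane} for some $\varLambda>\lambda(\alpha)$, and pick $R$ so large that $\lambda_R(\alpha)<\varLambda$. Since $\tilde P$, $\tilde Q$ are continuous, $L$-periodic in $x$ and bounded below by a positive constant on the compact set $[0,L]\times[0,R]$ while $(P_{\alpha,R},Q_{\alpha,R})$ is bounded above there, the quantity
\[
\theta^*:=\inf\bigl\{\theta>0:\,\theta(\tilde P,\tilde Q)\ge(P_{\alpha,R},Q_{\alpha,R})\ \text{on}\ \mathbb{R}\times[0,R]\bigr\}
\]
is finite and positive, and by $L$-periodicity and continuity $(w,z):=\theta^*(\tilde P,\tilde Q)-(P_{\alpha,R},Q_{\alpha,R})\ge 0$ touches $(0,0)$ at some point of $[0,L]\times[0,R]$; the Dirichlet condition $Q_{\alpha,R}(\cdot,R)=0<\tilde Q(\cdot,R)$ excludes contact at $y=R$. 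Subtracting the two eigenvalue problems yields a linear system for $(w,z)$ with strictly positive right-hand sides proportional to $\varLambda-\lambda_R(\alpha)>0$ and the homogeneous Robin BC. The three remaining contact scenarios each lead to a contradiction: an interior zero of $z$ is ruled out by evaluating the $z$-equation at the minimum (where $\nabla z=0$ and $\Delta z\ge 0$, yet the equation forces $-d\Delta z>0$); a zero of $z$ at $y=0$ is ruled out by Hopf combined with the Robin BC, which forces $w<0$ at that point; and a zero of $w$ at a minimum is ruled out by evaluating the $w$-equation (where $w'=0$ and $w''\ge 0$, but the equation forces $w''<0$). This contradiction completes the proof.
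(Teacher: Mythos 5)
Your high-level strategy — take $C^2_{\rm loc}$ limits of the truncated principal eigenpairs, then rule out larger eigenvalues by a sliding comparison — matches the paper's, and your non-existence argument is essentially the same as the paper's (which invokes the reasoning of Proposition~\ref{principal eigenvalue in strip}(iii)). The gaps are in the compactness/nondegeneracy half of the existence argument.

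First, the uniform-in-$R$ local $L^\infty$ bound on $Q_{\alpha,R}$ — the step the paper singles out as the key difficulty, handled there by the compactness/contradiction argument of Lemma~\ref{lemma3.6} — is not established by a boundary Schauder plus Harnack chain. Boundary Schauder estimates require an $L^\infty$ bound as input, so invoking them to produce one is circular. The Harnack inequality, whether interior or boundary, delivers only comparability of values ($\sup\le C\inf$ over overlapping balls); it does not yield an absolute pointwise bound unless you already control $Q_{\alpha,R}$ at a fixed interior anchor point, and you do not: the normalization controls $Q_{\alpha,R}(\cdot,0)$ only, and the zeroth-order coefficient $-(d\alpha^2+f_v(x,0)+\lambda_R(\alpha))$ in the $Q_{\alpha,R}$-equation can be negative, so one cannot push the boundary bound inward by a bare maximum principle either. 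Indeed $Q_{\alpha,R}$ genuinely attains an interior maximum larger than its boundary trace (it vanishes at $y=R$), so the magnitude of that interior maximum must be controlled by an honest argument. The paper does this by renormalizing by the assumed blow-up value, passing to a limit satisfying $w_\infty\in[0,1]$ with $w_\infty(\cdot,0)\equiv 0$, homogeneous Robin data, and $w_\infty(x_\infty,y_\infty)=1$, and then deriving a contradiction from the Hopf lemma. Your sketch skips this.

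Second, you do not rule out the degenerate limit $Q_\alpha\equiv 0$. Your balanced normalization $\|P_{\alpha,R}\|_{L^\infty}+\|Q_{\alpha,R}(\cdot,0)\|_{L^\infty}=1$ passes to the limit and guarantees at least one of $\|P_\alpha\|_{L^\infty}$, $\|Q_\alpha(\cdot,0)\|_{L^\infty}$ is positive, but your positivity argument ("a boundary zero of $Q_\alpha$ would force $P_\alpha<0$ through Robin and Hopf") silently assumes $Q_\alpha\not\equiv 0$; the Hopf lemma gives nothing on the zero function. If $Q_\alpha\equiv 0$ (so that $\|P_\alpha\|_{L^\infty}=1$), then $P_\alpha$ is a nontrivial nonnegative $L$-periodic solution of the constant-coefficient ODE $-DP_\alpha''+2D\alpha P_\alpha'+(-D\alpha^2+\mu-\lambda(\alpha))P_\alpha=0$, which forces $P_\alpha$ to be constant and $\lambda(\alpha)=-D\alpha^2+\mu$. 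That conflicts with the strict bound $\lambda(\alpha)\le\lambda_R(\alpha)<-D\alpha^2+\mu$ from \eqref{bound in strip} for any fixed $R>R_0$, but you must make this argument: it is exactly the content of the paper's Lemma~\ref{lemma3.5}, and without it your proof is incomplete.
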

 
 \begin{remark}
 	\textnormal{We point out here that the classical Krein-Rutman theorem cannot be applied anymore due to the noncompactness of the domain.  We denote by $(P_R,Q_R):=(P_{\alpha,R},Q_{\alpha,R})$  the principal eigenfunction pair of \eqref{eigen-pb-strip} in $\overline\Omega_R$ associated with the principal eigenvalue $\lambda_R(\alpha)$ for simplicity. As  will be shown later, with the technical Lemmas \ref{lemma3.5}--\ref{lemma3.7}, we can show that, up to normalization,  $\lim_{R\to +\infty}(P_{R},Q_{R})$ turns out to be the generalized principal eigenfunction pair $(P_\alpha, Q_\alpha)$ of \eqref{eigenvalue pb in half-plane} in $\overline\Omega$ corresponding to the generalized principal eigenvalue $\lambda(\alpha)$. The statements of Lemmas \ref{lemma3.5}--\ref{lemma3.7} are similar to Lemmas 3.5--3.7 in \cite{GMZ2015}, however, our case is much more involved, since  the heterogeneous assumption is now set on $f$, this does not allow us to get the nice upper estimate as in Lemma 3.6 of \cite{GMZ2015}. 
 	For the sake of completeness, we give the details below.
 	} 
 \end{remark}
  
\begin{lemma}%[\cite{GMZ2015}]
	\label{lemma3.5}
	For any $R>R_0$, normalizing with $\Vert P_R(\cdot)\Vert_{L^\infty(\mathbb{R})}=1$, there exists $C_1>0$ (independent of $R$) such that
	\begin{equation*}
	\Vert Q_R(\cdot,0)\Vert_{L^\infty(\mathbb{R})}>C_1.
	\end{equation*}
\end{lemma}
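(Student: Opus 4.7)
The plan is to evaluate the first (road) equation at a maximum point of $P_R$ and use the maximum principle together with the monotonicity of $R\mapsto\lambda_R(\alpha)$ from Proposition \ref{principal eigenvalue in strip} to produce a uniform lower bound.

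More precisely, fix $\alpha\in\mathbb{R}$ and $R>R_0$. Since $P_R$ is continuous, positive and $L$-periodic in $x$, with the normalization $\|P_R\|_{L^\infty(\mathbb{R})}=1$ there exists $x_R\in[0,L]$ (depending on $R$) such that $P_R(x_R)=1$. At this interior maximum one has $P_R'(x_R)=0$ and $P_R''(x_R)\le 0$. Plugging $x=x_R$ into the road equation of \eqref{eigen-pb-strip} gives
\begin{equation*}
\nu\,Q_R(x_R,0)=-D\,P_R''(x_R)+\bigl(-D\alpha^2+\mu-\lambda_R(\alpha)\bigr)P_R(x_R)\ge -D\alpha^2+\mu-\lambda_R(\alpha).
\end{equation*}
Next I would invoke part (iii) of Proposition \ref{principal eigenvalue in strip}, which asserts that $R\mapsto\lambda_R(\alpha)$ is decreasing, to obtain $-\lambda_R(\alpha)>-\lambda_{R_0}(\alpha)$ for all $R>R_0$. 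Combined with the strict lower bound of part (iv), namely $-\lambda_{R_0}(\alpha)>D\alpha^2-\mu$, this yields
\begin{equation*}
-D\alpha^2+\mu-\lambda_R(\alpha)>-D\alpha^2+\mu-\lambda_{R_0}(\alpha)>0.
\end{equation*}
Therefore, setting
\begin{equation*}
C_1:=\frac{-D\alpha^2+\mu-\lambda_{R_0}(\alpha)}{\nu}>0,
\end{equation*}
we conclude $\|Q_R(\cdot,0)\|_{L^\infty(\mathbb{R})}\ge Q_R(x_R,0)>C_1$, and this constant depends only on $\alpha$ and the fixed threshold $R_0$, not on $R$.

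There is essentially no subtle obstacle here; the only points to verify carefully are that the maximum of the periodic function $P_R$ is actually attained (so that $P_R'(x_R)=0$ and $P_R''(x_R)\le 0$ are legitimate), that the strict inequalities in Proposition \ref{principal eigenvalue in strip}(iii)--(iv) are strict (they are, by construction in the preceding steps of that proposition), and that $C_1$ as defined above does not depend on $R$. Since the whole argument is a single application of the maximum principle at a periodic extremum combined with the two monotonicity/lower-bound statements already proved for $\lambda_R(\alpha)$, the estimate follows at once.
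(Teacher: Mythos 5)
Your proof is correct and constitutes a genuinely different—and more direct—argument than the paper's. The paper argues by contradiction: it supposes $\Vert Q_{R_k}(\cdot,0)\Vert_{L^\infty(\mathbb{R})}\to 0$ along a sequence $R_k\to+\infty$, extracts a locally uniform limit $(P_\infty,0)$ via Arzel\`a--Ascoli, observes that $P_\infty$ is then a positive $L$-periodic solution of a constant-coefficient ODE and hence constant, deduces $\lambda(\alpha)=-D\alpha^2+\mu$, and finally derives a contradiction with the strict lower bound $-\lambda_R(\alpha)>D\alpha^2-\mu$ from Proposition \ref{principal eigenvalue in strip}(iv). You instead evaluate the road equation pointwise at a maximizer $x_R$ of the $L$-periodic function $P_R$, where $P_R'(x_R)=0$ and $P_R''(x_R)\le 0$, obtaining
\begin{equation*}
\nu\,Q_R(x_R,0)\ge -D\alpha^2+\mu-\lambda_R(\alpha),
\end{equation*}
and then combine the strict monotonicity $R\mapsto\lambda_R(\alpha)$ decreasing (part (iii)) with the same strict lower bound (part (iv)) to bound the right-hand side below by the $R$-independent quantity $-D\alpha^2+\mu-\lambda_{R_0}(\alpha)>0$. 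Both proofs ultimately hinge on the bound $-\lambda_R(\alpha)>D\alpha^2-\mu$, but your route avoids compactness and passage to the limit entirely and produces an explicit constant $C_1=\bigl(-D\alpha^2+\mu-\lambda_{R_0}(\alpha)\bigr)/\nu$, whereas the paper's argument is non-constructive. The only thing worth stating explicitly (which you flag) is that $C_1$ depends on the fixed $\alpha$ as well as $R_0$; this is fine since the lemma only asserts independence of $R$.
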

\begin{proof}%[Proof of Lemma \ref{lemma3.5}]
	If the conclusion is not true, we assume  that  there exists a sequence $(R_k)_{k\in\mathbb{N}}$ satisfying $R_k\to+\infty$ such that $\Vert Q_{R_k}(\cdot,0)\Vert_{L^{\infty}(\mathbb{R})}\to 0$ and $\Vert P_{R_k}\Vert_{L^{\infty}(\mathbb{R})}=1$. Since $(P_{R_k},Q_{R_k})$ is $L$-periodic in $x$, we assume with no loss of generality that $x_k\in [0,L]$ such that $P(x_k)=1$. Since $(P_{R_k})_{k\in\mathbb{N}}$ and $(Q_{R_k}(\cdot, 0)
	)_{k\in\mathbb{N}}$ are uniformly bounded, by the Arzel\`{a}-Ascoli Theorem, up to extraction of a subsequence, one has $P_{R_k}\to P_{\infty}\ge 0$ and  $Q_{R_k}(\cdot, 0)\to 0$ as $k\to+\infty$. Moreover, there exists $x_\infty\in [0,L]$ such that, up to a subsequence,  $x_k\to x_\infty$ as $k\to+\infty$. Passing to the limit $k\to+\infty$ in the first equation of eigenvalue problem \eqref{eigen-pb-strip} satisfied by $(P_{R_k},Q_{R_k})$ in $\overline\Omega_{R_k}$ implies 
	\begin{equation*}
	-DP''_\infty+2D\alpha P'_\infty+(-D\alpha^2+\mu)P_\infty=\lambda(\alpha)P_\infty~~\text{in}~\mathbb{R}.
	\end{equation*}
	Moreover, $P_\infty$ is $L$-periodic in $x$ and $P_\infty(x_\infty)=1$. The strong maximum principle implies $P_\infty>0$ in $\mathbb{R}$. Thus, $P_\infty$ is a positive constant. Hence, $\lambda(\alpha)=-D\alpha^2+\mu$. This implies  $\lambda_R(\alpha)\ge \lambda(\alpha)= -D\alpha^2+\mu$%since $\lambda_R(\alpha)$ is decreasing in $R$
	, which contradicts  \eqref{bound in strip}. Consequently, Lemma \ref{lemma3.5} is proved.
\end{proof}
\begin{lemma}
	\label{lemma3.6}
	For any $R>R_0$, assume that $\Vert Q_R(\cdot, 0)\Vert_{L^\infty(\mathbb{R})}=1$, then  $Q_R(x,y)$ is locally bounded   as $R\to+\infty$ by some positive constant (independent of $R$). 
\end{lemma}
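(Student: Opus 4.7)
The plan is to argue by contradiction via a blow-up procedure combined with a Harnack chaining argument that supplies uniform boundedness on expanding sub-strips. Suppose the conclusion fails: there exist $A>0$ and a sequence $R_n\to+\infty$ such that $m_n:=\sup_{[0,L]\times[0,A]}Q_{R_n}\to+\infty$ (by $x$-periodicity, the supremum is attained). I will rescale $\bar Q_n:=Q_{R_n}/m_n$ so that $\sup_{[0,L]\times[0,A]}\bar Q_n=1$. Under the standing normalization $\|Q_{R_n}(\cdot,0)\|_{L^\infty(\mathbb R)}=1$, Lemma~\ref{lemma3.5} gives $\|P_{R_n}\|_{L^\infty(\mathbb R)}\leq 1/C_1$, so $\|\bar Q_n(\cdot,0)\|_{L^\infty(\mathbb R)}\leq 1/m_n\to 0$ and the Robin datum $\mu P_{R_n}/m_n\to 0$ uniformly on $\mathbb R$.

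The first key step will be to show that, for every fixed $B>A$, the sequence $\bar Q_n$ is uniformly bounded on $[0,L]\times[0,B]$. The function $\bar Q_n>0$ solves a uniformly elliptic equation with coefficients bounded independently of $n$ (by Proposition~\ref{principal eigenvalue in strip}(iv) and the boundedness of $f_v(\cdot,0)$), so interior Harnack inequalities apply. For any point $(x_0,y_0)\in[0,L]\times(A,B]$, I would connect $(x_0,y_0)$ to a point $(x_1,y_1)\in[0,L]\times[A/2,A]$ by a chain of $N=N(B)$ balls of fixed radius $r=A/8$ remaining compactly inside $[0,L]\times(A/4,B+A/8)\subset[0,L]\times(0,R_n)$ (possible once $n$ is large enough that $R_n>B+A/8$). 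Iterating the Harnack inequality along this chain produces a constant $C(B)>0$ such that $\bar Q_n(x_0,y_0)\leq C(B)^{N}\bar Q_n(x_1,y_1)\leq C(B)^{N}$; hence $\sup_{[0,L]\times[0,B]}\bar Q_n$ is bounded uniformly in $n$.

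Schauder estimates up to the boundary $\{y=0\}$, using the Robin condition with uniformly bounded right-hand side, will then yield uniform $C^{2,\delta}_{\mathrm{loc}}$ bounds for $\bar Q_n$ over $\overline\Omega$. Via a diagonal extraction I obtain a subsequence with $\bar Q_n\to\bar Q$ in $C^2_{\mathrm{loc}}(\overline\Omega)$, where $\bar Q\geq 0$ solves $-d\Delta\bar Q+2d\alpha\partial_x\bar Q-(d\alpha^2+f_v(x,0)+\lambda(\alpha))\bar Q=0$ in $\Omega$, and satisfies both $\bar Q(x,0)\equiv 0$ and, by passage to the limit in the Robin condition with vanishing right-hand side, $\partial_y\bar Q(x,0)\equiv 0$. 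Differentiating the elliptic equation iteratively in $y$ and using that $f_v(x,0)$ does not depend on $y$, I obtain $\partial_y^k\bar Q(\cdot,0)\equiv 0$ for every $k\geq 0$, so $\bar Q$ vanishes at infinite order along $\{y=0\}$. Aronszajn-type strong unique continuation for second-order elliptic operators then forces $\bar Q\equiv 0$ on $\overline\Omega$, in contradiction with $\sup_{[0,L]\times[0,A]}\bar Q=\lim_{n\to\infty}\sup_{[0,L]\times[0,A]}\bar Q_n=1$.

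The main obstacle will be running the Harnack chain uniformly in $n$ at positive distance from both the Robin boundary $\{y=0\}$ and the Dirichlet boundary $\{y=R_n\}$; this is possible precisely because $R_n\to+\infty$ and the ball radius can be chosen depending only on $A$, independent of $n$. Once this uniform bound on expanding sub-strips is in place, the blow-up passage to the limit and the Aronszajn-type unique continuation are by now standard ingredients, and do not require the homogeneity of $f_v(\cdot,0)$ exploited in the analogous step of \cite{GMZ2015}.
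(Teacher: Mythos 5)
Your blow-up strategy is correct and its overall skeleton (normalize, extract a limit on $\overline\Omega$ via elliptic estimates, read off a degenerate boundary condition, conclude a contradiction) matches the paper's, but the route you take through the final contradiction and the way you obtain compactness are genuinely different. The paper fixes a single compact set $K=\overline{B^+_\rho((0,0))}$ (taken large enough once and for all), normalizes by the maximum of $Q_n$ over $K$, so that $w_n:=Q_n/\max_K Q_n$ is automatically bounded by $1$ on $K$, and the limit $w_\infty$ inherits $w_\infty(\cdot,0)\equiv 0$, $\partial_y w_\infty(\cdot,0)\equiv 0$, and $w_\infty(x_\infty,y_\infty)=1$ for an interior point. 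The contradiction is then a one-liner: $w_\infty\ge 0$ and $w_\infty\not\equiv 0$ imply $w_\infty>0$ in the interior by the strong maximum principle, and then $w_\infty(\cdot,0)\equiv 0$ with the Hopf lemma forces $\partial_y w_\infty(\cdot,0)>0$, contradicting the vanishing normal derivative. You instead (a) propagate the bound $\bar Q_n\le 1$ from $[0,L]\times[0,A]$ to every $[0,L]\times[0,B]$ via a uniform Harnack chain, and (b) close the argument by showing that all $y$-derivatives of $\bar Q$ vanish on $\{y=0\}$ and invoking Aronszajn-type strong unique continuation.

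Both (a) and (b) are sound, with the caveat that (b) as stated requires a boundary version of strong unique continuation (or an odd-reflection argument across $\{y=0\}$, legitimate here because the coefficients are $y$-independent), together with enough regularity of $\bar Q$ to justify the iterated $y$-differentiation — neither is hard, but neither is free. What (b) buys you is not actually needed: you already know $\bar Q\ge 0$ with $\sup_{[0,L]\times[0,A]}\bar Q=1$, so the strong maximum principle plus the Hopf lemma give the contradiction with $\partial_y\bar Q(\cdot,0)\equiv 0$ immediately and with far less machinery. In particular, your closing remark that the unique-continuation route avoids the homogeneity of $f_v(\cdot,0)$ "exploited in the analogous step of \cite{GMZ2015}" is misplaced: the Hopf lemma argument is likewise insensitive to heterogeneity and is exactly what the present paper uses. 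So your proof is a valid alternative, but it substitutes a heavyweight tool (Aronszajn) for an elementary one (Hopf), and it additionally invests in Harnack chaining where the paper avoids this by simply normalizing against the maximum over a fixed large half-ball, which automatically bounds $w_n$ by $1$ there.
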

\begin{proof}%[Proof of Lemma \ref{lemma3.6}]
	For convenience, let us introduce some new notations. For $n>R_0$ large enough, we denote by $(\lambda_n(\alpha);(P_n,Q_n))$ the  principal eigenpair of \eqref{eigen-pb-strip}  in $\overline\Omega_n=\mathbb{R}\times[0,n]$ with normalization $\Vert Q_n(\cdot, 0)\Vert_{L^\infty(\mathbb{R})}=1$. Then, one has to show that, for any compact set $K\subset\overline\Omega$, there holds
	\begin{equation}\label{lem3.6-1}
	\sup\limits_n(\max\limits_{K\cap\overline\Omega_n}Q_n(x,y))<+\infty.
	\end{equation}
 To prove this, we first claim that 
	 $\Vert P_n\Vert_{L^\infty(\mathbb{R})}\le C_0$ for some constant $C_0>0$. Assume by contradiction that $\Vert P_n\Vert_{L^\infty(\mathbb{R})}$ is unbounded, then we  choose a sequence $(P_n)_{n\in\mathbb{N}}$ such that $\Vert P_n\Vert_{L^\infty(\mathbb{R})}\to +\infty$ as $n\to +\infty$. By renormalization, it follows that $\Vert P_n\Vert_{L^\infty(\mathbb{R})}=1$ while $\Vert Q_n(\cdot,0)\Vert_{L^\infty(\mathbb{R})}\to 0$. This contradicts the conclusion of Lemma \ref{lemma3.5}. Our claim is thereby achieved.
	 It then follows from the boundary condition $-d\partial_y Q_n(\cdot, 0)=\mu P_n(\cdot)-\nu Q_n(\cdot,0)$ that $\Vert\partial_y Q_n(\cdot,0)\Vert_{L^\infty(\mathbb{R})}\le (\mu C_0+\nu)/d$. Assume now that \eqref{lem3.6-1} is not true. Then,  there exist a compact subset $K\subset\overline\Omega$ and a sequence $(x_n,y_n)_{n\in\mathbb{N}}$ in $K\cap\overline\Omega_n$ so that $Q_n(x_n,y_n)=\max_{K\cap\overline\Omega_n} Q_n>n$. Then we are able to find a larger compact set containing $K$ such that this assumption is still satisfied. Therefore,    without loss of generality we take $K=\overline{B^+_\rho((0,0))}$ with radius $\rho$ large. Therefore, up to extraction of some subsequence, $x_n\to x_\infty\in[-\rho,\rho]$, $y_n\to y_\infty\in [0,+\infty)$ as $n\to +\infty$, thanks to the boundedness of $(y_n)_{n\in\mathbb{N}}$.  It follows that
	 either $y_\infty>0$ or $ y_\infty=0$.
	  By setting 
	\begin{equation*}
	w_n(x,y):=\frac{Q_n(x,y)}{Q_n(x_n,y_n)}~~\text{in}\  K\cap\overline\Omega_n,
	\end{equation*}
	one has $0<w_n\le 1$ in $K\cap\overline\Omega_n$ and $w_n(\cdot,0)<\frac{1}{n}$ in $[-\rho,\rho]$ for all $n$ large enough. In particular,  $w_n(x_n,y_n)=1$. It can be deduced that the function $w_n$ satisfies
	\begin{equation*}
	%\label{w_n}
	\begin{cases}
	-d \Delta w_n+2d\alpha\partial_x w_n-(d\alpha^2+f_v(x,0)+\lambda_n(\alpha))w_n=0,\quad &\text{in}\  K\cap\overline\Omega_n,\\
	-d\partial_y w_n(x,0)=\mu\frac{P_n(x)}{Q_n(x_n,y_n)}-\nu w_n(x,0), \quad &\text{in}\  [-\rho,\rho].
	\end{cases}
	\end{equation*}
	From standard elliptic estimates up to the boundary, the positive function $w_n$ converges, up to extraction of some subsequence,  to a classical solution $w_\infty\in [0,1]$  of 
	\begin{equation*}
	\begin{cases}
	-d\Delta w_\infty+2d\alpha\partial_x w_\infty-(d\alpha^2+f_v(x,0)+\lambda(\alpha))w_\infty=0, \quad &\text{in}\  K\cap\overline\Omega,\\
	-d\partial_y w_\infty(x,0)+\nu w_\infty (x,0)=0, \quad &\text{in}\  [-\rho,\rho].
	\end{cases}
	\end{equation*}
	Moreover, $w_\infty(\cdot,0)=0$ in $[-\rho,\rho]$ and $w_\infty(x_\infty,y_\infty)=1$. 
	Therefore,  the case that  $y_\infty=0$ is impossible. Assume now that $y_\infty>0$. By using the Harnack inequality up to the boundary, there exists a point $(x',y')$ in the neighborhood of $(x_\infty,y_\infty)$ belonging to $ (K\cap\overline\Omega)^\circ$ such that $w_\infty(x',y')\ge \frac{1}{2}$. Then, the strong maximum principle implies that $w_\infty>0$ in $(K\cap\overline\Omega)^\circ$.
	Since $w_\infty(\cdot,0)=0$ in $[-\rho,\rho]$, one infers from the boundary condition  that  $\partial_y w_\infty(\cdot,0)=0$ in $[-\rho,\rho]$.
	This is a contradiction with the Hopf lemma.  This completes the proof of Lemma \ref{lemma3.6}.
\end{proof}
\begin{lemma}%[\cite{GMZ2015}]
	\label{lemma3.7}
For any $R>R_0$, normalizing with $\Vert P_R(\cdot)\Vert_{L^\infty(\mathbb{R})}=1$,  there is  $C_2>0$ (independent of $R$) such that
\begin{equation*}
\Vert Q_R(\cdot,0)\Vert_{L^\infty(\mathbb{R})}\le C_2.
\end{equation*}	
\end{lemma}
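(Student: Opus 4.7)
The plan is to argue by contradiction and rescale so that the coupling on the road forces a limiting function that must vanish on the road, while by construction it has sup-norm one on the road — contradiction. The argument is the mirror image of Lemma \ref{lemma3.5}, with Lemma \ref{lemma3.6} providing the key interior estimates needed to pass to the limit.

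First I would assume that the conclusion fails, so that (up to a subsequence) there exist $R_n\to +\infty$ with $\Vert P_{R_n}\Vert_{L^\infty(\mathbb{R})}=1$ and $\Vert Q_{R_n}(\cdot,0)\Vert_{L^\infty(\mathbb{R})}\to+\infty$. Set
\begin{equation*}
(\tilde P_n,\tilde Q_n):=\frac{(P_{R_n},Q_{R_n})}{\Vert Q_{R_n}(\cdot,0)\Vert_{L^\infty(\mathbb{R})}},
\end{equation*}
which still solves \eqref{eigen-pb-strip} in $\overline{\Omega}_{R_n}$ with eigenvalue $\lambda_{R_n}(\alpha)$, is $L$-periodic in $x$, and satisfies $\Vert \tilde P_n\Vert_{L^\infty(\mathbb{R})}\to 0$ while $\Vert \tilde Q_n(\cdot,0)\Vert_{L^\infty(\mathbb{R})}=1$. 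In particular $\Vert\tilde P_n\Vert_{L^\infty(\mathbb{R})}\le 1$ for $n$ large, so the argument of Lemma \ref{lemma3.6} applies verbatim to the rescaled pair and gives a bound on $\tilde Q_n$ that is uniform on each compact subset of $\overline{\Omega}$.

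Next I would use elliptic regularity, together with the uniform bound on $\lambda_{R_n}(\alpha)$ from \eqref{bound in strip} (and the convergence $\lambda_{R_n}(\alpha)\to \lambda(\alpha)$), to extract a subsequence along which $\tilde Q_n\to Q_\infty$ in $C^2_{\mathrm{loc}}(\overline{\Omega})$ and $\tilde P_n\to 0$ in $C^2_{\mathrm{loc}}(\mathbb{R})$. The limit $Q_\infty\ge 0$ is $L$-periodic in $x$ and satisfies
\begin{equation*}
-d\Delta Q_\infty+2d\alpha\partial_x Q_\infty-(d\alpha^2+f_v(x,0))Q_\infty=\lambda(\alpha)Q_\infty\quad\text{in }\Omega,\qquad -d\partial_y Q_\infty(\cdot,0)+\nu Q_\infty(\cdot,0)=0\ \text{on }\mathbb{R}.
\end{equation*}
By $L$-periodicity in $x$, for each $n$ there is $x_n\in[0,L]$ with $\tilde Q_n(x_n,0)=1$; extracting $x_n\to x_\infty\in[0,L]$, one gets $Q_\infty(x_\infty,0)=1$, in particular $Q_\infty\not\equiv 0$.

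Finally, I would pass to the limit in the first equation of \eqref{eigen-pb-strip} satisfied by $(\tilde P_n,\tilde Q_n)$, namely
\begin{equation*}
-D\tilde P_n''+2D\alpha\tilde P_n'+(-D\alpha^2+\mu-\lambda_{R_n}(\alpha))\tilde P_n=\nu\tilde Q_n(\cdot,0).
\end{equation*}
The left-hand side tends to $0$ in $C^0_{\mathrm{loc}}(\mathbb{R})$ (all coefficients are bounded and $\tilde P_n\to 0$ in $C^2_{\mathrm{loc}}$), while the right-hand side tends to $\nu Q_\infty(\cdot,0)$. Hence $Q_\infty(\cdot,0)\equiv 0$ on $\mathbb{R}$, contradicting $Q_\infty(x_\infty,0)=1$. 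The main obstacle is to guarantee that the uniform local estimates of Lemma \ref{lemma3.6} survive the rescaling; this is exactly where the proof of that lemma is used, which is why it was stated in a normalization-free way (any uniform control on the companion $P$-component suffices, and here $\tilde P_n\to 0$ makes this trivial).
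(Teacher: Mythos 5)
Your proof is correct and takes essentially the same approach as the paper: a contradiction argument that renormalizes so $\Vert Q_{R_n}(\cdot,0)\Vert_{L^\infty}=1$ while $\Vert P_{R_n}\Vert_{L^\infty}\to 0$, uses Lemma \ref{lemma3.6} and elliptic estimates to pass to a limit, and then derives $Q_\infty(\cdot,0)\equiv 0$ from the road equation with $P_\infty\equiv 0$, contradicting $Q_\infty(x_\infty,0)=1$. The paper phrases the rescaling as ``by suitable renormalization'' rather than introducing the tilde notation explicitly, but the argument is the same.
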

\begin{proof}%[Proof of Lemma \ref{lemma3.7}]
	If the statement is not true, by suitable renormalization we assume that there is a sequence $(R_n)_{n\in\mathbb{N}}$ satisfying  $R_n\to +\infty$ such that $\Vert Q_{R_n}(\cdot, 0)\Vert_{L^\infty(\mathbb{R})}=1$ and such that $\Vert P_{R_n}(\cdot)\Vert_{L^\infty(\mathbb{R})}\to 0$. Without loss of generality, we assume that $x_n\in [0,L]$ for all $n\in\mathbb{N}$, such that $Q_{R_n}(x_n,0)=1$. Therefore, there is $x_\infty\in[0,L]$ such that, up to some subsequence,  $x_n\to x_\infty$ as $n\to+\infty$. Since $(P_{R_n})_{n\in\mathbb{N}}$ and $(Q_{R_n}(\cdot,0))_{n\in\mathbb{N}}$ are uniformly bounded in $L^\infty(\mathbb{R})$, it follows from Lemma \ref{lemma3.6} and from standard elliptic estimates up to the boundary that the function pair $(P_{R_n},Q_{R_n})$ converges as $n\to+\infty$, up to extraction of some subsequence, locally uniformly in $\overline\Omega$ to  $(P_\infty,Q_\infty)$. In particular, $P_\infty\equiv 0$ in $\mathbb{R}$ and $Q_\infty(x_\infty,0)=1$. Moreover, $P_\infty$ satisfies
	\begin{equation*}
	-DP_\infty''+2D\alpha P_\infty'+(-D\alpha^2+\mu)P_\infty-\nu Q_\infty(\cdot,0)=\lambda(\alpha)P_\infty~~\text{in}~\mathbb{R}.
	\end{equation*}
	Then, it is easily derived from above equation that $Q_\infty(\cdot,0)\equiv 0$ in $\mathbb{R}$, which contradicts $Q_\infty(x_\infty,0)=1$. The proof of this lemma is thereby complete.
%	\begin{equation}
%	\label{1212}
%	\begin{cases}
%	-d\Delta Q_\infty+2d\alpha\partial_x Q_\infty-(d\alpha^2+f_v(x,0))Q_\infty=\lambda(\alpha)Q_\infty,\quad &x\in \mathbb{R},\ y>0,\\
%	-d\partial_y Q_\infty (x,0)+\nu Q_\infty(x,0)=0,\quad&x\in\mathbb{R},
%	\end{cases}
%	\end{equation}
%	and it follows from the limit equation on the road %$-DP''_\infty+2D\alpha P'_\infty+(-D\alpha^2+\mu)P_\infty-\nu Q_\infty(x,0)=\lambda(\alpha) P_\infty$ 
%	one has $Q_\infty(x,0)\equiv0$ for all $x\in\mathbb{R}$, it follows from \eqref{1212} that $\partial_y Q_\infty(x,0)=0$, which is contradictory with Hopf Lemma and the fact that $Q_\infty(x_\infty,0)=1$. Therefore, the proof is complete.
\end{proof}
\begin{proof}[Proof of Theorem \ref{thm 5.1}]
By elliptic estimates and  Lemmas \ref{lemma3.5}--\ref{lemma3.7}, the eigenfunction pair $(P_R,Q_R)$ converges locally uniformly in $\overline\Omega$ as $R\to+\infty$ to a nonnegative and $L$-periodic (in $x$) function pair $(P_\alpha, Q_\alpha))$ solving the generalized eigenvalue problem \eqref{eigenvalue pb in half-plane} in the half-plane $\overline\Omega$ associated with the generalized principal eigenvalue $\lambda(\alpha)$. Moreover, up to normalization, it follows that $P_\alpha\le 1$ in $\mathbb{R}$ and $Q_\alpha$ is locally bounded in $\overline \Omega$. By the strong maximum principle and the Hopf Lemma, we further derive that $(P_\alpha, Q_\alpha)$ is positive in $\overline\Omega$.

Assume that $\varLambda$  corresponds to a positive and $L$-periodic (in $x$) eigenfunction pair $(P,Q)$ such that the generalized eigenvalue problem \eqref{eigenvalue pb in half-plane} is satisfied. By reasoning as in the proof of Proposition \ref{principal eigenvalue in strip} (iii), it follows that $\varLambda<\lambda_R(\alpha)$ for any $R>R_0$, which reveals  $\varLambda\le \lambda(\alpha)$ by taking $R\to+\infty$.	
\end{proof}
\subsection{Spreading speeds and pulsating fronts in the half-plane}
This subsection is devoted to the proofs of Theorems \ref{thm-asp-half-plane} and  \ref{PTF-in half-plane}. We start with   variational characterization of the rightward and leftward asymptotic spreading speeds $c^*_\pm$  by using the generalized principal eigenvalue  constructed in the preceding subsection. 
Define 
\begin{equation*}
c^*_+:=\inf_{\alpha>0}\frac{-\lambda(\alpha)}{\alpha},~~~c^*_-:=\inf_{\alpha>0}\frac{-\lambda(-\alpha)}{\alpha}.
\end{equation*}
Thanks to \eqref{bound in half-plane}, it is noticed that $c^*_+\in [2\sqrt{dm},+\infty)$ is well-defined. Moreover, we point out that, from the definitions of $\lambda(\pm\alpha)$ and of $c^*_{\pm}$ and from the property that $\lambda_{R}(\alpha)=\lambda_R(-\alpha)$ for all $\alpha\in\mathbb{R}$ (for any $R>R_0$) shown in the proof of Lemma \ref{lemma-same speed}, it is obvious to see that $c^*_+=c^*_-$. In what follows, we denote $c^*:=c^*_+=\inf_{\alpha>0}-\lambda(\alpha)/\alpha>0$.
\begin{lemma} 
	\label{lemma-c*}
	There holds $c^*_R<c^*$ and
	$c^*_R\to c^*$  as $R\to+\infty$.
\end{lemma}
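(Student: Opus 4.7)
The plan rests on two facts already established: the strict monotonicity $\lambda_R(\alpha) > \lambda(\alpha)$ in $R$ (Proposition \ref{principal eigenvalue in strip}(iii) combined with the definition \eqref{principal-e} of $\lambda(\alpha)$), and the fact that $\alpha \mapsto -\lambda_R(\alpha)$ and $\alpha \mapsto -\lambda(\alpha)$ are convex on $\mathbb{R}$ with the coercive two-sided bounds \eqref{bound in strip} and \eqref{bound in half-plane}. From these alone, the inequality $c^*_R \leq c^*$ is immediate termwise, and the whole problem reduces to (a) upgrading it to a strict inequality and (b) obtaining the reverse $\liminf$ bound.

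\textbf{Step 1 (Strict inequality $c^*_R < c^*$).} The lower bound in \eqref{bound in half-plane} gives $-\lambda(\alpha)/\alpha \to +\infty$ as $\alpha \to 0^+$ (since $-\lambda(\alpha) \geq d\alpha^2 + m \geq m > 0$) and as $\alpha \to +\infty$ (since $-\lambda(\alpha) \geq \max(D,d)\alpha^2 - O(1)$). Combined with continuity of $\alpha \mapsto -\lambda(\alpha)$ on $(0,+\infty)$, the infimum defining $c^*$ is attained at some $\alpha^\star \in (0,+\infty)$. Plugging into $c^*_R$ and using the strict monotonicity $-\lambda_R(\alpha^\star) < -\lambda(\alpha^\star)$ (which follows because $R \mapsto -\lambda_R(\alpha^\star)$ is strictly increasing, as shown in Step 4 of Proposition \ref{principal eigenvalue in strip}), I obtain
\[
c^*_R \;\leq\; \frac{-\lambda_R(\alpha^\star)}{\alpha^\star} \;<\; \frac{-\lambda(\alpha^\star)}{\alpha^\star} \;=\; c^*.
\]

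\textbf{Step 2 (Convergence $c^*_R \to c^*$).} The upper half $\limsup_{R\to+\infty} c^*_R \leq c^*$ is immediate from Step 1. For the reverse inequality, I pick minimizers $\alpha^*_R > 0$ of $\alpha \mapsto -\lambda_R(\alpha)/\alpha$; these exist by the same coercivity argument as above, applied now to $-\lambda_R$ using the lower bound $-\lambda_R(\alpha) > d\alpha^2 + m - d\pi^2/R^2$ from \eqref{bound in strip}. The key point is that, under the standing assumption $R > R_0$, one has $m - d\pi^2/R^2 > m - d\pi^2/R_0^2 > 0$ \emph{uniformly in} $R$, so the coercivity of $-\lambda_R$ is uniform. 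Combined with the uniform upper bound $c^*_R \leq c^*$, this traps $\alpha^*_R$ in a compact subinterval $[\alpha_1, \alpha_2] \subset (0,+\infty)$ independent of $R$. Extracting a subsequence with $\alpha^*_R \to \alpha^*_\infty \in [\alpha_1,\alpha_2]$, and using that a pointwise limit of convex functions on an open interval converges locally uniformly (hence $-\lambda_R \to -\lambda$ uniformly on $[\alpha_1, \alpha_2]$), I conclude
\[
\liminf_{R \to +\infty} c^*_R \;=\; \liminf_{R \to +\infty} \frac{-\lambda_R(\alpha^*_R)}{\alpha^*_R} \;=\; \frac{-\lambda(\alpha^*_\infty)}{\alpha^*_\infty} \;\geq\; c^*.
\]

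\textbf{Main obstacle.} The only delicate point is keeping the minimizers $\alpha^*_R$ uniformly away from $0$ and $+\infty$; without this, one cannot pass to the limit inside the quotient $-\lambda_R(\alpha^*_R)/\alpha^*_R$. This is exactly where the restriction $R > R_0$ from the remark after Theorem \ref{thm2.1} is used in an essential way: it converts the pointwise lower bound $-\lambda_R(\alpha) > d\alpha^2 + m - d\pi^2/R^2$ into a coercivity estimate uniform in $R$, which forces $\alpha_1 > 0$. Once that is in place, the local uniform convergence of the convex functions $-\lambda_R$ to $-\lambda$ does the rest.
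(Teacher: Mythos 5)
Your proposal is correct and follows essentially the same two-step route as the paper (strict inequality from an attained minimizer; convergence from uniform convergence on a compact interval plus coercivity at both ends of $(0,+\infty)$). If anything, both of your steps are stated a bit more carefully than the paper's: the paper passes from the pointwise strict inequality $-\lambda_R(\alpha)/\alpha<-\lambda(\alpha)/\alpha$ to strictness of the infima without explicitly noting that $c^*$ is attained at some $\alpha^\star>0$ (your observation is precisely what legitimizes that step), and it invokes Dini's theorem ``uniformly in $\alpha\in(0,+\infty)$,'' which is imprecise since Dini requires compactness, whereas you trap the minimizers $\alpha^*_R$ in a fixed compact subinterval and use there the locally uniform convergence of pointwise-convergent convex functions. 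One small slip: with $R_0=2R_*$ as in the Remark after Theorem \ref{thm2.1} one has $m=d\pi^2/R_0^2$ exactly, so $m-d\pi^2/R_0^2=0$ rather than $>0$. This is harmless since only $R\to+\infty$ matters; restricting to $R\geq 2R_0$, say, gives $m-d\pi^2/R^2\geq 3m/4>0$ uniformly, which is the uniform coercivity you actually need.
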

\begin{proof}
	 Since the function $R\mapsto-\lambda_R(\alpha)$ is increasing  for all $\alpha\in\mathbb{R}$, one has $-\lambda_R(\alpha)<-\lambda(\alpha)$ for all  $\alpha\in\mathbb{R}$. This implies 	
	\begin{equation*}
	\frac{-\lambda_R(\alpha)}{\alpha}<\frac{-\lambda(\alpha)}{\alpha} ~~~ \text{for all}\  \alpha>0.
	\end{equation*}
	Furthermore,
	\begin{equation*}
	\inf\limits_{\alpha>0}\frac{-\lambda_R(\alpha)}{\alpha}<\inf\limits_{\alpha>0}\frac{-\lambda(\alpha)}{\alpha},
	\end{equation*}
	which implies 
	\begin{equation}
	\label{c*}
	0<c^*_R<c^*.
	\end{equation}
	
	It remains to prove that $c^*_R\to c^*$  as $R\to+\infty$. Since the functions $\alpha\mapsto
	-\lambda_R(\alpha)$ and  $\alpha\mapsto-\lambda(\alpha)$ are convex and continuous in $\mathbb{R}$, one has $\alpha\mapsto-\lambda_R(\alpha)/\alpha$ and $\alpha\mapsto-\lambda(\alpha)/\alpha$ 
	are continuous for all $\alpha\in(0,+\infty)$. Since 
	 $-\lambda_R(\alpha)/\alpha$ increasingly converges to $-\lambda(\alpha)/\alpha$ as $R\to+\infty$ for each $\alpha\in(0,+\infty)$, the  Dini's Theorem (see, e.g., \cite[Theorem 7.13]{Rudin1976}) implies that 
	\begin{equation*}
	\frac{-\lambda_R(\alpha)}{\alpha}\to \frac{-\lambda(\alpha)}{\alpha} \quad \text{as}\ R\to+\infty~~\text{uniformly in}~\alpha\in (0,+\infty).
	\end{equation*}
		On the other hand, it is seen from \eqref{bound in strip} and \eqref{bound in half-plane} that both $-\lambda_R(\alpha)/\alpha$ and $-\lambda(\alpha)/\alpha$ tend to infinity as $\alpha\to 0^+$ and as $\alpha\to+\infty$. One then concludes that
	\begin{equation*}
	\inf_{\alpha>0}\frac{-\lambda_R(\alpha)}{\alpha}\to \inf_{\alpha>0}\frac{-\lambda(\alpha)}{\alpha} \quad \text{as}\ R\to+\infty.
	\end{equation*}
	That is, $c^*_R\to c^*$ as $R\to+\infty$. The proof is thereby complete.
\end{proof}

 \begin{proof}[Proof of Theorem \ref{thm-asp-half-plane}]
    (i) We first construct the upper bound in the rightward propagation. 
   Let $(u,v)$ be the solution of \eqref{pb in half-plane} with nonnegative, bounded, continuous and compactly supported initial condition $(u_0,v_0)\not\equiv(0,0)$. We need to show
   \begin{equation}
   \label{find upper bound}
   \lim\limits_{ t\to+\infty}\sup\limits_{x\ge ct,\ 0\le y\le A} |(u(t,x),v(t,x,y))|=0~~\text{for all}~c> c^*,
   \end{equation}    
 For any $c>c^*$, choose  $c'\in[c^*,c)$ and $\alpha>0$ such that $-\lambda(
 	\alpha)=\alpha c'$. Let $(\lambda(\alpha);(P_\alpha, Q_\alpha))$ be the  generalized principal eigenpair of \eqref{eigenvalue pb in half-plane} derived in Theorem \ref{thm 5.1}. Since $(u_0,v_0)$ is compactly supported, %satisfying $0\le u_0\le p<\nu/\mu$ in $\mathbb{R}$ and $0\le v_0\le q<1$ in $\overline\Omega$ for some $(p,q)\in\mathbb{\overline P}$,
 	 one infers that, 	
 	 for some $\gamma>0$, $\gamma e^{-\alpha(x-c't)}(P_\alpha(x), Q_\alpha(x,y))$ lies above $(u_0,v_0)$ at time $t=0$. Thanks to the KPP assumption, one further deduces that 
 	 $\gamma e^{-\alpha(x-c't)}(P_\alpha(x), Q_\alpha(x,y))$ 	
 	  is an exponential  supersolution of the Cauchy problem \eqref{pb in half-plane} and  $\gamma e^{-\alpha(x-c't)}(P_\alpha(x), Q_\alpha(x,y))\ge (u(t,x),v(t,x,y))$ for all $t\ge 0$ and $(x,y)\in\overline\Omega$ by Proposition \ref{cp}. It follows that, for any $A>0$,
 	\begin{equation*}
 	\sup\limits_{x\ge ct,0\le y\le A} (u(t,x),v(t,x,y))\le \sup\limits_{x\ge ct,0\le y\le A}\gamma e^{-\alpha(c-c')t}(P_\alpha(x),Q_\alpha(x,y)),
 	\end{equation*}
 	whence, by Theorem \ref{thm 5.1} and by passing to the limit $t\to+\infty$, the formula \eqref{find upper bound} is proved.  
 	
 	(ii) Let us  prove the lower bound \eqref{find lower bound}. 
 	Choose any $c\in(0,c^*)$. Let $(u,v)$ be the solution of \eqref{pb in half-plane} with nonnegative, nontrivial, bounded and continuous  initial condition $(u_0,v_0)<(\nu/\mu,1)$. Thanks to \eqref{truncated to half-plane}, we know that  $(U_B(x), V_B(x,y))$ increasingly converges to $(\nu/\mu,1)$ as $B\to +\infty$ uniformly in $x$ and locally uniformly in $y$.
 	Since $(u_0,v_0)<(\nu/\mu,1)$ in $\overline\Omega$, for $B>R_0$ sufficiently large, there is a smooth cut-off function $\chi^B:[0,+\infty)\mapsto[0,1]$ satisfying $\chi^B(\cdot)=1$ in $[0,B-1]$ and $\chi^B(\cdot)=0$ in $[B,+\infty)$, such that $(0,0)\le (u_0,\chi^B v_0)\le (U_B,V_B)$ in $\overline\Omega_B$. Let $(u_B,v_B)$ be the solution to the Cauchy problem \eqref{pb in strip} in $\overline\Omega_B$ with initial datum $(u_0,\chi^B v_0)$ and let  $(U_B, V_B)$ be  the associated unique nontrivial stationary solution of \eqref{pb in strip}. 	By  Lemma \ref{lemma-c*},  up to increasing $B$, the  asymptotic spreading speed $c^*_B$ of the solution $(u_B,v_B)$ to  \eqref{pb in strip} in $\overline\Omega_B$ can be very close to $c^*$, say $c^*_B\sim c^*$, such that  $c<c^*_B<c^*$. From Theorem \ref{thm-asp-strip}, one derives 
 	\begin{equation*}
 	\lim\limits_{t\to+\infty}\inf\limits_{0\le x\le ct,\ y\in [0,B]}(u_B(t,x),v_B(t,x,y))=(U_B(x), V_B(x,y)),
 	\end{equation*}
 due to  $0<c<c^*_B$. Notice that  $(u,v)$  is a strict supersolution to  problem \eqref{pb in strip} with initial datum $(u_0,\chi^Bv_0)$ in $\overline\Omega_B$,  Proposition \ref{cp-strip} yields $(u(t,x),v(t,x,y))>(u_B(t,x),v_B(t,x,y))$ for all $t>0$ and $(x,y)\in\overline\Omega_B$.
% 	\begin{equation*}
% 	u(t,x)>u_B(t,x),\quad  v(t,x,y)>v_B(t,x,y)\ \text{for all} \ t>0 \ \text{and}\  (x,y)\in\overline\Omega_B.
% 	\end{equation*}
  Thus, for all $0<A\le B$, it follows that 
 	\begin{equation*}
 	(U_B(x),V_B(x,y))\le \lim\limits_{t\to+\infty}\inf\limits_{0\le x\le ct, y\in[0,A]}(u(t,x),v(t,x,y))\le ({\nu}/{\mu},1).
 	\end{equation*}
   Passing to the limit $B\to+\infty$ together with Proposition \ref{prop3.9} (ii) implies that, for any $A>0$,
 	\begin{equation*}
 		\lim\limits_{ t\to+\infty}\inf\limits_{0\le x\le ct,0\le y\le A} (u(t,x),v(t,x,y))=({\nu}/{\mu},1).
 	\end{equation*}
 	The proof of Theorem \ref{thm-asp-half-plane} is thereby complete.
 	\end{proof}

Finally, we prove Theorem \ref{PTF-in half-plane} in the right direction, that is, problem \eqref{pb in half-plane} admits rightward pulsating fronts if and only if $c\ge c^*$. The proof is based on an asymptotic method.

\begin{proof}[Proof of Theorem \ref{PTF-in half-plane}]
	 Fix $c\ge  c^*$, one infers from \eqref{c*} that $c>c^*_R$ for any $R>R_0$.  It follows from Theorem \ref{thm-PTF-in strip} that the truncated problem \eqref{pb in strip} admits a rightward pulsating traveling front $(u_R(t,x),v_R(t,x,y))=(\phi_R(x-ct,x),\psi_R(x-ct,x,y))$ with wave speed $c$  in the strip $\overline\Omega_R$ connecting $(U_R,V_R)$ and $(0,0)$. Moreover, the profile $(\phi_R(s,x),\psi_R(s,x,y))$ is  decreasing in $s$ and $L$-periodic in $x$. Consider a sequence $(R_n)_{n\in\mathbb{N}}$ such that $R_n\to+\infty$ as $n\to+\infty$. Denote by $(\phi_{R_n}(s,x),\psi_{R_n}(s,x,y))$ the sequence of the rightward pulsating traveling fronts of \eqref{pb in strip} with speed $c$ and by $(U_{R_n},V_{R_n})$ the corresponding nontrivial steady states of \eqref{pb in strip} in the strips $\overline\Omega_{R_n}$. One has
\begin{align*}
\phi_{R_n}(-\infty,x)=U_{R_n}(x), ~~~~~~~ \phi_{R_n}(+\infty,x)=0,\cr
\psi_{R_n}(-\infty,x,y)=V_{R_n}(x,y), \ \psi_{R_n}(+\infty,x,y)=0,
\end{align*}
uniformly in $(x,y)\in\overline\Omega_{R_n}$. Moreover,  it follows from  Proposition \ref{prop3.9} that $0<U_{R_n}<{\nu}/{\mu}$ in $\mathbb{R}$, $0<V_{R_n}<1$ in $\mathbb{R}\times[0,R)$. By the limiting property in Proposition \ref{prop3.9} (ii), one can assume,  without loss of generality, that $\frac{4\nu}{5\mu}<U_{R_n}(\cdot) <\frac{\nu}{\mu}$ in $\mathbb{R}$ for each $n\in\mathbb{N}$.
Then due to the monotonicity and continuity of the function  $s\mapsto\phi_{R_n}(s,\cdot)$, there is a unique $s_n\in\mathbb{R}$ such that 
\begin{equation*}
\max\limits_{x\in\mathbb{R}}\phi_{R_n}(s_n,\cdot)=\max\limits_{x\in[0,L]}\phi_{R_n}(s_n,\cdot)=\frac{\nu}{2\mu}.
\end{equation*} 

Set $(\phi_n(s,x),\psi_n(s,x,y)):=(\phi_{R_n}(s+s_n,x),\psi_{R_n}(s+s_n,x,y))$.
Since
\begin{equation*}
\big(u_n(\frac{x-s}{c},x),v_n(\frac{x-s}{c},x,y)\big)=(\phi_n(s,x),\psi_n(s,x,y)),
\end{equation*}
by standard parabolic estimates, the sequence $((u_n, v_n))_{n\in\mathbb{N}}$ converges, up to extraction of a subsequence, locally uniformly to a classical solution $\big(u(\frac{x-s}{c},x), v(\frac{x-s}{c},x,y)\big)=(\phi(s,x),\psi(s,x,y))$ of \eqref{pb in half-plane} satisfying the normalization condition
\begin{equation*}
\max\limits_{x\in\mathbb{R}}\phi(0,\cdot)=\max\limits_{x\in[0,L]}\phi(0,\cdot)=\frac{\nu}{2\mu}.
\end{equation*}
Moreover, the profile  $(\phi(s,x),\psi(s,x,y))$ is non-increasing in $s$ and $L$-periodic in $x$ such that
\begin{align*}
\phi(-\infty,x)&={\nu}/{\mu}, ~~~~ \phi(+\infty,x)=0,\cr
\psi(-\infty,x,y)&=1, ~~~~~ \psi(+\infty,x,y)=0,
\end{align*}
uniformly in $x\in\mathbb{R}$ and locally uniformly in $y\in[0,+\infty)$.

Now, let us show the monotonicity of $(\phi(s,x),\psi(s,x,y))$ in $s$.  Since the pulsating front $(u(t,x),v(t,x,y))=(\phi(x-ct,x),\psi(x-ct,x,y))$ propagates with speed $c\ge c^*>0$, it follows that $u_t\ge 0$ for $t\in\mathbb{R}$ and $x\in\mathbb{R}$, $v_t\ge 0$ for $t\in\mathbb{R}$ and $(x,y)\in\overline\Omega$. Notice also that $(u(t,x), v(t,x,y))$ is a global classical solution of problem \eqref{pb in half-plane}, whence $z=v_t$ is a global classical  solution of $z_t=d\Delta z+f_v(x,v)z$  for $t\in\mathbb{R}$ and $(x,y)\in\Omega$ with $z\ge 0$. From the strong parabolic maximum principle, it follows that $z>0$ or $z\equiv 0$ for $t\in\mathbb{R}$ and $(x,y)\in\Omega$. That is, $v_t>0$ or $v_t\equiv 0$  for $t\in\mathbb{R}$ and $(x,y)\in\Omega$.
The latter case is impossible, otherwise one would derive from $v_t\equiv0$ that either $v\equiv 0$ or $v\equiv 1$  for $t\in\mathbb{R}$ and $(x,y)\in\Omega$. This is a contradiction with the limiting behavior of the pulsating fronts. Therefore, $v_t>0$  for $t\in\mathbb{R}$ and $(x,y)\in\Omega$ and by continuity $v_t>0$ for $t\in\mathbb{R}$ and $(x,y)\in\overline\Omega$. Likewise, one infers that $u_t>0$ for $t\in\mathbb{R}$ and  $x\in\mathbb{R}$. Hence, the rightward traveling fronts $(\phi(s,x),\psi(s,x,y))$ are decreasing in $s$.

 Assume that there exists a rightward pulsating traveling front $(\phi(x-ct,x),\psi(x-ct,x,y))$ of \eqref{pb in half-plane} with  speed $c>0$. Then, one infers from Theorem \ref{thm-asp-half-plane} that, for any $c'\in[0,c^*)$ and for any $B>0$,
\begin{equation*}
\lim\limits_{ t\to+\infty}\sup\limits_{0<x\le c't, y\in[0,B]}|(\phi(x-ct,x),\psi(x-ct,x,y))-({\nu}/{\mu},1)|=0.
\end{equation*}
In particular, for any $c'\in[0,c^*)$ and for any $B>0$, taking $x=c't$ and $y\in[0,B]$, there holds
\begin{equation*}
\lim\limits_{t\to+\infty}\phi((c'-c)t,c't)={\nu}/{\mu},\quad \lim\limits_{t\to+\infty}\psi((c'-c)t,c't,y)=1.
\end{equation*}
From the limiting condition \eqref{PTF-limit condition}, it follows that $c'<c$ for all $c'\in[0,c^*)$. Consequently, one gets $c^*\le c$. This implies  the non-existence of rightward pulsating traveling fronts with speed $0<c<c^*$.
\end{proof}

\section*{Acknowledgments}
I would like to express my sincerest gratitude to Professor François Hamel and Professor
Xing Liang for their continued guidance, support and encouragement during the preparation of
this project. I would like to thank Lei Zhang for many helpful discussions and thank Professor
Xuefeng Wang and the anonymous referee for their careful reading and for their valuable and
constructive comments which led to an important improvement of this manuscript.
 %\section*{Acknowledgements}
%M. Zhang would like to thank the Institut de Math\'{e}matiques de Marseille of Universit\'{e} d'Aix-Marseille for the warm hospitality during her visit. 
%The authors are grateful to Professor Fran\c{c}ois Hamel for many helpful and motivating  discussions during the preparation of this work.

%\appendix
%\section{Appendix}

\end{document}